\newtheorem{thm}{Theorem} [section]
\newtheorem{cor}[thm]{Corollary}
\newtheorem{lem}[thm]{Lemma}
\newtheorem{prop}[thm]{Proposition}
\theoremstyle{definition}
\newtheorem{definition}[thm]{Definition}
\theoremstyle{remark}
\newtheorem{rem}[thm]{Remark}
\numberwithin{equation}{section}
\begin{document}

\newcommand{\thmref}[1]{Theorem~\ref{#1}}
\newcommand{\secref}[1]{Section~\ref{#1}}
\newcommand{\lemref}[1]{Lemma~\ref{#1}}
\newcommand{\propref}[1]{Proposition~\ref{#1}}
\newcommand{\corref}[1]{Corollary~\ref{#1}}
\newcommand{\remref}[1]{Remark~\ref{#1}}
\newcommand{\eqnref}[1]{(\ref{#1})}
\newcommand{\exref}[1]{Example~\ref{#1}}

\newcommand{\nc}{\newcommand}
 \nc{\Z}{{\mathbb Z}}
 \nc{\C}{{\mathbb C}}
 \nc{\N}{{\mathbb N}}
 \nc{\F}{{\mf F}}
 \nc{\Q}{\mathbb{Q}}
 \nc{\la}{\lambda}
 \nc{\ep}{\epsilon}
 \nc{\h}{\mathfrak h}
 \nc{\n}{\mf n}
 \nc{\G}{{\mathfrak g}}
 \nc{\DG}{\widetilde{\mathfrak g}}
 \nc{\SG}{\breve{\mathfrak g}}
 \nc{\La}{\Lambda}
 \nc{\is}{{\mathbf i}}
 \nc{\V}{\mf V}
 \nc{\bi}{\bibitem}
 \nc{\E}{\mc E}
 \nc{\ba}{\tilde{\pa}}
 \nc{\half}{\frac{1}{2}}
 \nc{\hgt}{\text{ht}}
 \nc{\mc}{\mathcal}
 \nc{\mf}{\mathfrak} 
 \nc{\hf}{\frac{1}{2}}
 \nc{\hgl}{\widehat{\mathfrak{gl}}}
 \nc{\gl}{{\mathfrak{gl}}}
 \nc{\so}{{\mathfrak{so}}}
 \nc{\hz}{\hf+\Z}
 \nc{\ospO}{\mathcal{O}^{2m+1|2n}}
 \nc{\CatO}{\mathcal{O}}
\nc{\Omn}{\mc{O}^{m|n}}
\nc{\OO}{\mc{O}}

\nc{\U}{\bold{U}}
\nc{\SU}{\overline{\mathfrak u}}
\nc{\ov}{\overline}
\nc{\ul}{\underline}
\nc{\wt}{\widetilde}
\nc{\I}{\mathbb{I}}
\nc{\X}{\mathbb{X}}
\nc{\Y}{\mathbb{Y}}
\nc{\hh}{\widehat{\mf{h}}}
\nc{\aaa}{{\mf A}}
\nc{\xx}{{\mf x}}
\nc{\wty}{\widetilde{\mathbb Y}}
\nc{\ovy}{\overline{\mathbb Y}}
\nc{\vep}{\bar{\epsilon}}
\nc{\wotimes}{\widehat{\otimes}}

\newcommand{\wgv}{\Lambda^{\infty} \mathbb V}
\newcommand{\wgw}{\Lambda^{\infty} \mathbb V^*}

\nc{\ch}{\text{ch}}
\nc{\glmn}{\mf{gl}(m|n)}
\nc{\Uq}{{\mathcal U}_q}
\nc{\Uqgl}{\bold{U}_q(\gl_{2n+2})}
\nc{\VV}{\mathbb V}
\nc{\WW}{\mathbb W}
 \nc{\TL}{{\mathbb T}_{\mathbb L}}
 \nc{\TU}{{\mathbb T}_{\mathbb U}}
 \nc{\TLhat}{\widehat{\mathbb T}_{\mathbb L}}
 \nc{\TUhat}{\widehat{\mathbb T}_{\mathbb U}}
 \nc{\TLwt}{\widetilde{\mathbb T}_{\mathbb L}}
 \nc{\TUwt}{\widetilde{\mathbb T}_{\mathbb U}}
 \nc{\TLC}{\ddot{\mathbb T}_{\mathbb L}}
 \nc{\TUC}{\ddot{\mathbb T}_{\mathbb U}}

\nc{\osp}{\mathfrak{osp}}
 \nc{\be}{e}
 \nc{\bff}{f}
 \nc{\bk}{k}
 \nc{\bt}{t}
\nc{\bun}{{\bold{U}^{\imath}}}
\nc{\bundot}{{\dot{\bold{U}}^\imath}}
\nc{\id}{\text{id}}
\nc{\Ihf}{\I^\imath}
\nc{\Udot}{\dot{\U}}
\nc{\one}{\bold{1}}
\nc{\Uwedge}{\dot{\U}^{\wedge}}
\nc{\cun}{\Uj_q(\mathfrak{sl}_{2r+1}))}

\nc{\ospV}{\osp(2m+1|2n|\infty)}
\nc{\ospW}{\mathfrak{osp}(2m+1|2n+\infty)}
\nc{\epinftyV}{\epsilon^0_{\infty}}
\nc{\epinftyW}{\epsilon^1_{\infty}}
\nc{\wtlV}{X^{\ul \infty, +}_{{\bf b},0}}
\nc{\wtlW}{X^{\ul \infty, +}_{{\bf b},1}}
\nc{\wtlVk}{X^{\ul k, +}_{{\bf b},0}}
\nc{\wtlWk}{X^{\ul k, +}_{{\bf b},1}}
\nc{\f}{\bold{f}}
\nc{\fprime}{\bold{'f}}
\nc{\Qq}{\Q(q)}
\nc{\qq}{(q^{-1}-q)}
\nc{\uqsl}{\bold{U}_q({\mf{sl}_{2n+2}})}
\nc{\BLambda}{{\Lambda_{\inv}}}
\nc{\ThetaA}{\Theta}
\nc{\ThetaB}{\Theta^{\imath}}
\nc{\ThetaC}{\Theta^{\jmath}}
\nc{\B}{\bold{B}}
\nc{\Abar}{\psi}
\nc{\Bbar}{\psi_\imath}
\nc{\HBm}{\mc{H}_{B_m}}
\nc{\ua}{\mf{u}}
\nc{\nb}{u}
\nc{\wtA}{\texttt{wt}}
\nc{\bunlambda}{\Lambda^\imath}
\nc{\Dupsilon}{\Upsilon^{\vartriangle}}
\nc{\inv}{\theta}
\nc{\VVm}{\VV^{\otimes m}}
\nc{\mA}{\mathcal{A}}
\nc{\Tb}{{\mathbb{T}}^{\bf b}}
\nc{\wTb}{\widehat{\mathbb{T}}^{\bf b}}

\nc{\qqq}{(1-q^{-2})^{-1}}

\nc{\cundot}{\dot{\U}^{\jmath} }
\nc{\Iint}{\mathbb{I}^{\jmath} }
\nc{\ibe}[1]{e_{\alpha_{#1}}}
\nc{\ibff}[1]{f_{\alpha_{#1}}}
\nc{\ibk}[1]{k_{\alpha_{#1}}}
\nc{\wtC}{\texttt{wt}_{\jmath}}
\nc{\Cbar}{\psi_{\jmath}}

\newcommand{\blue}[1]{{\color{blue}#1}}
\newcommand{\red}[1]{{\color{red}#1}}
\newcommand{\green}[1]{{\color{green}#1}}
\newcommand{\white}[1]{{\color{white}#1}}

\newcommand{\Uj}{\U^{\jmath}}
\newcommand{\Ui}{\U^{\imath}}
\newcommand{\ibar}{\psi_{\imath}}

\title[Kazhdan-Lusztig Theory of super type D and quantum symmetric pairs]
{Kazhdan-Lusztig Theory of super type D and quantum symmetric pairs}
 
 \author[Huanchen Bao]{Huanchen Bao}
\address{Department of Mathematics, University of Maryland, College Park, MD 20742}
\email{huanchen@math.umd.edu}

\begin{abstract}

We reformulate the Kazhdan-Lusztig theory for the BGG category $\mathcal{O}$ of Lie algebras of type D via the theory of canonical bases arising from quantum symmetric pairs initiated by  Weiqiang Wang and the author. This is further applied to formulate and establish for the first time the  Kazhdan-Lusztig theory for the BGG category $\mathcal{O}$ of the ortho-symplectic Lie superalgebra $\mathfrak{osp}(2m|2n)$.

\end{abstract}

\maketitle

\let\thefootnote\relax\footnotetext{{\em 2010 Mathematics Subject Classification.} Primary 17B10.}

\setcounter{tocdepth}{1}
\tableofcontents

\section*{Introduction}

\subsection{}The Kazhdan-Lusztig theory provides a solution to the problem of determining the irreducible characters in the  BGG category $\mc{O}$ of semisimple Lie algebras (\cite{KL, BB, BK}). The theory was originally formulated in terms of the canonical bases (i.e., Kazhdan-Lusztig bases) of Hecke algebras. 
On the other hand, the classification of finite-dimensional simple Lie superalgebras over complex numbers has been obtained by Kac (\cite{Kac77}) in 1970's, while the representation theory of Lie superalgebras turns out to be very difficult. One of the main reasons is that the corresponding Weyl group of a Lie superalgebra is not enough to control the linkage principle in the BGG category $\mc{O}$. Thus the relevant Hecke algebras do not play  significant roles in the representation theory of Lie superalgebras as in the representation theory of semisimple Lie algebras.

 The Lie superalgebras $\mathfrak{gl}(m|n)$ and $\mathfrak{osp}(m|2n)$, which generalize the classical Lie algebras, are arguably the most important classes of Lie superalgebras. In 2003, Brundan in \cite{Br03} formulated a Kazhdan-Lusztig type conjecture for the full category $\mc{O}$ of general linear Lie superalgebras. The Jimbo-Schur (\cite{Jim}) duality plays a crucial role in Brundan's conjecture, which allows a reformulation of the Kazhdan-Lusztig theory in type A  in terms of the canonical bases of the quantum group  $\U_q(\mathfrak{sl}_{k})$ of type A.  Brundan's conjecture was proved first by  Cheng, Lam and Wang \cite{CLW15} and later by Brundan, Losev and Webster  \cite{BLW}. 
 
Recently in \cite{BW13}, Weiqiang Wang and the author initiated a theory of canonical bases arising from quantum symmetric pairs. We showed that a coideal subalgebra of $\U_q(\mathfrak{sl}_{k})$ centralizes the Hecke algebra of type B (of equal parameters) when acting on $\VV^{\otimes m}$, the tensor product of the natural representation $\VV$ of $\U_q(\mathfrak{sl}_{k})$. We constructed a (new) $\imath$-canonical basis on $\VV^{\otimes m}$, which allows a reformulation of the Kazhdan-Lusztig theory of type B independent of the Hecke algebra.
The theory was further applied to formulate and establish for the first time the Kazhdan-Lusztig theory for the BGG category $\mathcal{O}$ of the ortho-symplectic Lie superalgebra $\mathfrak{osp}(2m+1|2n)$. The geometric realization of the coideal subalgebras considered there and the canonical bases on the modified coideal subalgebras  have been given in \cite{BKLW} and \cite{LW15} using partial flag varieties of type B/C. 

On the other hand, the problem of determining the irreducible characters in the BGG category $\mc{O}$ of the ortho-symplectic Lie superalgebra $\mathfrak{osp}(2m|2n)$ is still  open  since 1970's.

\subsection{}
In this paper, we provide a complete solution to the irreducible character problem in the BGG category $\mc{O}$ of modules of integral and half-integral weights of the ortho-symplectic Lie superalgebra $\mathfrak{osp}(2m|2n)$. We adapt the theory of  canonical bases from \cite{BW13} to quantum symmetric pairs with different parameters. 
The non-super specialization here amounts a reformulation of the classical Kazhdan-Lusztig theory of type C/D. This paper is a  sequel of \cite{BW13}.

\subsection{}
A naive idea to follow \cite{BW13}  is to find the subalgebra  of $\U_q(\mathfrak{sl}_k)$, whose action on the tensor product $\VV^{\otimes m}$ centralizes the action of the Hecke algebra $\mc{H}_{D_m}$ of type D on $\VV^{\otimes m}$. Such (new) subalgebra has been constructed using the geometry of isotropic partial flag varieties of type D in \cite{FL14}. However the subalgebra is very involved, as expected, due to the complicated structure of isotropic flag varieties of type D, which makes it not suitable for further application to the category $\mc{O}$ of Lie superalgebras. 

We realize a natural and simple way to overcome the difficulty is to  first consider the Hecke algebra $\mathcal{H}^{1}_{B_m}$ of type B with unequal parameters. Let $\mathcal{H}^{p}_{B_m}$ be the Iwahori-Hecke algebra of type $B_m$ with two parameters $p$ and $q$ over $\mathbb Q(q, p)$, generated by $H^p_0, H_1, H_2, \dots , H_{m-1}$, and subject to certain relations (see \eqref{eq:HeckeB}). 
The Hecke algebra  $\mathcal{H}^{1}_{B_m}$ is the specialization of $\mathcal{H}^{p}_{B_m}$ at $p=1$.
We observe that $\mathcal{H}^{1}_{B_m}$ naturally contains the Hecke algebra   $\mc{H}_{D_m}$ of type D as a subalgebra. 

Then we look for the subalgebra  of $\U_q(\mathfrak{sl}_k)$, whose action on the tensor product $\VV^{\otimes m}$ centralizes the action of the Hecke algebra $\mathcal{H}^{1}_{B_m}$ on $\VV^{\otimes m}$. The subalgebra is a coideal subalgebra of the quantum group  $\U_q(\mathfrak{sl}_{k})$ of type A, denoted by $\U^{\imath}_q(\mathfrak{sl}_{k})$. Since the Hecke algebra $\mathcal{H}^{1}_{B_m}$ contains $\mc{H}_{D_m}$ as a subalgebra, the actions of $\U^{\imath}_q(\mathfrak{sl}_{k})$ and $\mc{H}_{D_m}$ on the tensor space $\VV^{\otimes n}$ clearly  commute. 

The coideal subalgebra comes in different forms depending on the parity of $k$. 
The quantum group $\U_q(\mathfrak{sl}_{k})$ and the coideal subalgebra $\U^{\imath}_q(\mathfrak{sl}_{k})$ form a quantum symmetric pair (\cite{Ko14}).

\subsection{}
Ehrig and Stroppel used the same coideal subalgebra $\Ui_q(\mathfrak{sl}_{k})$ of the quantum group $\U_q(\mathfrak{sl}_{k})$ to study the parabolic category $\mc{O}$ of the Lie algebra $\mathfrak{so}(2m)$ in \cite{ES13} simultaneously and independently from \cite{BW13}. 
They established the commutativity between the actions of $\Ui_q(\mathfrak{sl}_{k})$ and $\mathcal{H}_{D_m}$ on the tensor space $\VV^{\otimes m}$. The actions of the Chevalley type generators of $\Ui_q(\mathfrak{sl}_{k})$ on $\VV^{\otimes m}$ have been identified with the actions of translation functors on the category $\mc{O}$ of the Lie algebra $\mathfrak{so}(2m)$. However, neither the establishment of the $\imath$-canonical basis nor the (re)formulation of the (super) Kazhdan-Lusztig theory of type D was available there.

\subsection{}
It turns out the action of $\U^{\imath}_q(\mathfrak{sl}_{2r+1})$ (that is $k=2r+1$, being an odd number) on $(\VV^*)^{\otimes n}$, tensor product of the restricted dual $\VV^*$ of the natural representation $\VV$ of $\U_q(\mathfrak{sl}_{2r+1})$, centralizes the action of $\mc{H}^q_{B_n}$ ($p=q$) on $(\VV^*)^{\otimes n}$. It is actually more suitable to consider $\mc{H}^q_{B_n}$ as the Hecke algebra $\mc{H}_{C_n}$ of type C (of equal parameters) due to the connection with the BGG category $\mc{O}$ of the Lie superalgebra $\mathfrak{osp}(2m|2n)$ (in particular, the specialization $\mathfrak{sp}(2n)$ when $m =0$).


%

\subsection{}
	 In \cite{BW13} we considered the Hecke algebra $\mathcal{H}^{q}_{B_m}$ with equal parameters $p =q$. It was showed there  that certain coideal subalgebra of the quantum group $\U_q(\mathfrak{sl}_{k})$ of type A forms double centralizers with  $\mathcal{H}^{q}_{B_m}$ when acting on the tensor space $\VV^{\otimes m}$. In this paper, we consider the Hecke algebra $\mathcal{H}^{1}_{B_m}$ with $p =1$. Thus the (different) centralizing coideal subalgebra $\U^{\imath}_q(\mathfrak{sl}_{k})$ of $\U_q(\mathfrak{sl}_{k})$ considered in this paper is of different parameters from the one considered in \cite{BW13}, where the choice of the parameters in $\U^{\imath}_q(\mathfrak{sl}_{k})$ corresponds to the choice of the parameter $p$ in the two parameters Hecke algebra  $\mathcal{H}^{p}_{B_m}$. 
	 
The construction of $\imath$-canonical bases developed in \cite{BW13} applies to the coideal subalgebras with different parameters without difficulty, that is, simple $\U_q(\mathfrak{sl}_{k})$-modules and their tensor products admit $\imath$-canonical bases. In the ongoing work \cite{BW16}, we generalize the construction of $\imath$-canonical bases to more general quantum symmetric pairs (see also Remark~\ref{rem:BW16}).
	 
	 Thanks to the (weak) Schur type dualities for the type D (and type C), the classical Kazhdan-Lusztig theory of type D (and type C, respectively) can be reformulated in terms of $\imath$-canonical bases on $\VV^{\otimes m}$ (on $(\VV^*)^{\otimes n}$, respectively). More precisely speaking, the entries of the transition matrix between the $\imath$-canonical basis and the standard basis (i.e., the monomial basis) are exactly the  Kazhdan-Lusztig polynomials of type D (and type C, respectively.)

\subsection{}	 
We apply the theory of $\imath$-canonical bases to the BGG category $\mathcal{O}$ of the Lie superalgebra $\mathfrak{osp}(2m|2n)$  in Section~\ref{sec:rep}. In this section we consider the infinite rank limit of the quantum symmetric pair $(\U_q(\mathfrak{sl}_{\infty}), \Ui_q(\mathfrak{sl}_{\infty}))$. The theory of the super duality developed in \cite{CLW11} plays the essential role.

For a $0^m1^n$-sequence ${\bf b}$ (which consists of $m$ zeros and $n$ ones), we define a tensor space $\mathbb{T}^{\bf b}$ using $m$ copies of $\VV$ and $n$ copies of $\VV^*$ with the tensor order prescribed by ${\bf b}$ (with 0 corresponds to V). In this approach, $\mathbb{T}^{\bf b}$ (more precisely, its integral form) at $q = 1$ is identified with the Grothendieck group $[\mc{O}^{\bf b}]$ of the BGG category $\mc{O}^{\bf b}$ of $\mathfrak{osp}(2m|2n)$-modules (relative to a Borel subalgebra of type ${\bf b}$). We construct the $\imath$-canonical basis and dual $\imath$-canonical basis on  (a suitable completion of) the tensor space $\mathbb{T}^{\bf b}$. The construction of these bases is exactly the same as in \cite{BW13}, while only the precise formulas of these bases are different (which is irrelevant to the construction).

For the Lie superalgebra $\mathfrak{osp}(2m|2n)$, there are generally two types of fundamental systems, hence related Dynkin diagrams, with respect to different choices of the Borel subalgebras ${\bf b}$: with a type D branch (where ${\bf b}$ starts with $0^2$) in the Dynkin diagram; or with a type C branch (where ${\bf b}$ starts with $1$) in the Dynkin diagram. Those two types of fundamental systems are not conjugate under the Weyl group actions, but differ by odd reflections. 
We prove the Kazhdan-Lusztig theory for the category $\mc{O}^{\bf b}$ of the Lie superalgebra $\mathfrak{osp}(2m|2n)$  with respect to ${\bf b}$ of the first type by induction on $n$, where the base case $n=0$ follows from the classical Kazhdan-Lusztig theory of type D. On the other hand, the proof of the Kazhdan-Lusztig theory for the category $\mc{O}^{\bf b}$ of the Lie superalgebra $\mathfrak{osp}(2m|2n)$  with respect to ${\bf b}$ of the second type follows by induction on $m$, where the base case $m=0$ follows from the classical Kazhdan-Lusztig theory of type C. The induction processes of the two types are actually similar, where we compare category $\mc{O}$'s with respect to adjacent Borel subalgebras (switching adjacent $0$ and $1$ in the sequence ${\bf b}$), as well as compare the parabolic category $\mc{O}$ with the full category $\mc{O}$. We also study certain infinite rank limits of the parabolic category $\mc{O}$.

\vspace{.2cm}
\noindent {\bf Acknowledgement.}  The author would like to thank Weiqiang Wang for helpful discussion and suggestion on this paper. This research is partially supported by the AMS-Simons Travel Grant. A part of this paper was written when the author was visiting the Max-Planck-Institute in Bonn during summer 2015. He would like to thank the institute for its excellent working environment and support.

\section{Preliminaries on quantum groups}
 \label{sec:prelim}
 
 In this preliminary section, we review some basic definitions and constructions on quantum groups
 from Lusztig's book \cite{Lu94}. 
 We also introduce the involution $\inv$ and its quotient $\La_\inv$ which will be used in quantum symmetric pairs. 
 
\subsection{The involution $\inv$ and the lattice $\Lambda_\inv$}
  \label{subsec:theta}

Let $q$ be an indeterminate. 
For $r \in \N$, we define the following index sets:
\begin{align}
  \label{eq:I}
\begin{split}
\I_{2r+1} &= \{i \in \Z \mid -r \le i \le r\},  
 \\
\I_{2r}  &= \big\{i \in \Z+\hf \mid -r < i < r \big\}.
\end{split}
\end{align}

Set $k =2r+1$ or $2r$, and we use the shorthand notation $\I =\I_k$ in the remainder of Section~\ref{sec:prelim}, and very often throughout this paper.
Let  
\[
\Pi (= \Pi_k)= \big \{\alpha_i=\varepsilon_{i-\hf}-\varepsilon_{i+\hf} \mid i \in \I \big \}
\]
be the simple system of type $A_{k}$, and
let $\Phi$ be the associated root system. Denote by 
$$
\Lambda (= \Lambda_k) = \sum_{{i \in \I} } \big( \Z \varepsilon_{i - \hf} + \Z \varepsilon_{i + \hf} \big)
$$  
the integral weight lattice, and denote by $(\cdot, \cdot)$ the standard bilinear pairing on 
$\Lambda$ such that $(\varepsilon_a, 
\varepsilon_b) = \delta_{ab}$ for all $a,b$.  
For any $\mu = \sum_{i}c_i\alpha_i \in \N {\Pi}$, 
set $\hgt(\mu) = \sum_{i} c_i$. 

Let $\inv$ be the involution of the weight lattice $\Lambda$ such that 
\[
\inv(\varepsilon_{i-\hf}) = - \varepsilon_{-i+\hf}, \quad \text{ for all } i \in \I. 
\]
We shall also write $\lambda^{\inv} = \inv(\lambda)$, for $\lambda \in \Lambda$. The involution
$\inv$ preserves the bilinear form $(\cdot,\cdot)$ on the weight lattice $\Lambda$
and induces an automorphism on the simple system $\Pi$ such that $ \alpha^{\inv}_i = \alpha_{-i}$ for all $i \in \I$. 

Let $\Lambda^{\inv} = \{\mu \in \Lambda \mid \mu^{\inv} +\mu\}$ and 
$\Lambda_{\inv} = \Lambda/\Lambda^{\inv}$. For $\mu \in \Lambda$, denote by $\ov{\mu}$ the image of $\mu$ under the quotient map. 
There is a well-defined bilinear 
pairing  $\Z[\alpha_i - \alpha_{-i}]_{i \in \I} \times \BLambda  \rightarrow \Z$, such that 
$(\sum_{i > 0}a_i(\alpha_i-\alpha_{-i}), \ov{\mu}) :=   \sum_{i > 0}a_i(\alpha_i-\alpha_{-i}, \mu)$ for any $\ov{\mu} \in \BLambda$
with any preimage $\mu\in \Lambda$.

\subsection{The quantum group }

 The quantum group $\U = \U_q(\mf{sl}_{k+1})$ is defined to be the associative $\Q(q)$-algebra  
 generated by $E_{\alpha_i}$, $F_{\alpha_i}$, $K_{\alpha_i}$, $K^{-1}_{\alpha_i}$, $i \in \I$, subject to
the following relations (for $i$, $j \in \I$):
\begin{align*}
 K_{\alpha_i} K_{\alpha_i}^{-1} &= K_{\alpha_i}^{-1} K_{\alpha_i} =1,
  \\
 K_{\alpha_i} K_{\alpha_j} &= K_{\alpha_j} K_{\alpha_i},
  \\
 K_{\alpha_i} E_{\alpha_j} K_{\alpha_i}^{-1} &= q^{(\alpha_i, \alpha_j)} E_{\alpha_j}, \\\displaybreak[0]
 K_{\alpha_i} F_{\alpha_j} K_{\alpha_i}^{-1} &= q^{-(\alpha_i, \alpha_j)} F_{\alpha_j}, \\\displaybreak[0]
 E_{\alpha_i} F_{\alpha_j} -F_{\alpha_j} E_{\alpha_i} &= \delta_{i,j} \frac{K_{\alpha_i}
 -K^{-1}_{\alpha_i}}{q-q^{-1}}, \\ \displaybreak[0]
 E_{\alpha_i}^2 E_{\alpha_j} +E_{\alpha_j} E_{\alpha_i}^2 
 &= (q+q^{-1}) E_{\alpha_i} E_{\alpha_j} E_{\alpha_i},  \quad &&\text{if } |i-j|=1, \\
 E_{\alpha_i} E_{\alpha_j} &= E_{\alpha_j} E_{\alpha_i},  \,\qquad\qquad &&\text{if } |i-j|>1, \\
 F_{\alpha_i}^2 F_{\alpha_j} +F_{\alpha_j} F_{\alpha_i}^2 
 &= (q+q^{-1}) F_{\alpha_i} F_{\alpha_j} F_{\alpha_i},  \quad\, &&\text{if } |i-j|=1,\\
 F_{\alpha_i} F_{\alpha_j} &= F_{\alpha_j} F_{\alpha_i},  \qquad\ \qquad &&\text{if } |i-j|>1.
\end{align*}

Let $\U^+$, $\U^0$ and $\U^-$ be the $\Qq$-subalgebra of $\U$ generated by $E_{\alpha_i}$, $K^{\pm 1}_{\alpha_i}$, 
and $F_{\alpha_i}$  respectively, for  $i \in \I$. We introduce the divided power $F^{(a)}_{\alpha_i} = F^a_{\alpha_i}/[a]!$, where $a \ge 0$, 
$[a] = (q^{a}- q^{-a})/(q-q^{-1})$ and $[a]! = [1][2] \cdots [a]$. Let $\mA =\Z[q,q^{-1}]$.
Let $_\mA\U^{+}$ be the $\mA$-subalgebra of $\U^{+}$ 
generated by $E^{(a)}_{\alpha_i}$ for various $a \ge 0$ and $i \in \I$. Similarly let $_\mA\U^{-}$ be the $\mA$-subalgebra of $\U^{-}$ generated by $E^{(a)}_{\alpha_i}$ for various $a \ge 0$ and $i \in \I$.

\begin{prop}
\label{prop:invol}
\begin{enumerate}\label{prop:tauA}
\item There is an  involution $\omega$ on the $\Qq$-algebra $\U$
such that $\omega(E_{\alpha_i}) =F_{\alpha_i}$, $\omega(F_{\alpha_i}) =E_{\alpha_i}$, 
and $\omega(K_{\alpha_i}) = K^{-1}_{\alpha_i}$ for all $i \in \I$.

\item There is an anti-linear ($q \mapsto q^{-1}$) bar involution of the $\Q$-algebra $\U$
such that $\ov{E}_{\alpha_i}= E_{\alpha_i}$, $\ov{F}_{\alpha_i}=F_{\alpha_i}$, 
and $\ov{K}_{\alpha_i}=K_{\alpha_i}^{-1}$ for all $i \in \I$.

(Sometimes we denote the bar involution on $\U$ by $\psi$.)
%
\end{enumerate}
\end{prop}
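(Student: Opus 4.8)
The plan is to exploit the fact that $\U$ is presented by the listed generators and relations, so that constructing a map out of $\U$ reduces to prescribing images of $E_{\alpha_i}, F_{\alpha_i}, K_{\alpha_i}^{\pm 1}$ on the free algebra and checking that each defining relation is sent to a valid identity; the map then descends to $\U$. After that, I will verify that each map squares to the identity on the generators, which forces it to be an involution.

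For part (1), I will let $\omega$ be the $\Qq$-algebra endomorphism of the free algebra determined by $E_{\alpha_i}\mapsto F_{\alpha_i}$, $F_{\alpha_i}\mapsto E_{\alpha_i}$, $K_{\alpha_i}\mapsto K_{\alpha_i}^{-1}$, and check the relations one family at a time. The relations purely among the $K_{\alpha_i}$ are invariant under $K_{\alpha_i}\mapsto K_{\alpha_i}^{-1}$. The relation $K_{\alpha_i}E_{\alpha_j}K_{\alpha_i}^{-1}=q^{(\alpha_i,\alpha_j)}E_{\alpha_j}$ is carried by $\omega$ to $K_{\alpha_i}^{-1}F_{\alpha_j}K_{\alpha_i}=q^{(\alpha_i,\alpha_j)}F_{\alpha_j}$, which is a rearrangement of the $K$-$F$ relation, and symmetrically for the $K$-$F$ relation. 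For $E_{\alpha_i}F_{\alpha_j}-F_{\alpha_j}E_{\alpha_i}=\delta_{ij}\tfrac{K_{\alpha_i}-K_{\alpha_i}^{-1}}{q-q^{-1}}$, applying $\omega$ introduces an overall sign on the left-hand side and, since $K_{\alpha_i}\mapsto K_{\alpha_i}^{-1}$ negates the numerator $K_{\alpha_i}-K_{\alpha_i}^{-1}$, also an overall sign on the right, so the identity is preserved. The $E$-Serre and $F$-Serre relations have the same shape and are simply interchanged. Since $\omega^2$ fixes every generator, $\omega^2=\id$.

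For part (2), I will let the bar map be the ring endomorphism of the free algebra that is anti-linear with $\ov q=q^{-1}$ and fixes $E_{\alpha_i}$, $F_{\alpha_i}$ while sending $K_{\alpha_i}\mapsto K_{\alpha_i}^{-1}$. The argument runs parallel: the $K$-relations are preserved; the relation $K_{\alpha_i}E_{\alpha_j}K_{\alpha_i}^{-1}=q^{(\alpha_i,\alpha_j)}E_{\alpha_j}$ maps to $K_{\alpha_i}^{-1}E_{\alpha_j}K_{\alpha_i}=q^{-(\alpha_i,\alpha_j)}E_{\alpha_j}$, a consequence of the original relation, and likewise for $F$; and the Serre relations only involve the bar-invariant coefficient $q+q^{-1}$. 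The step I expect to need the most care is the $[E,F]$ relation: the bar map fixes the left-hand side, while the right-hand side acquires a sign in the numerator (from $K_{\alpha_i}\mapsto K_{\alpha_i}^{-1}$) and a sign in the denominator (from $q-q^{-1}\mapsto q^{-1}-q$), and one must observe that these two sign changes cancel. Since the bar map squares to a map fixing all generators and restoring $q$, it is an involution. Both maps are standard and can also be cited from Lusztig's book \cite{Lu94}.
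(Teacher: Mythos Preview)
Your argument is correct. The paper itself does not supply a proof of this proposition: it is stated as a recalled standard fact about quantum groups (implicitly from \cite{Lu94}, which is cited throughout the preliminaries section), and the text moves immediately to the coproduct without further comment. Your verification that each defining relation is preserved under the prescribed assignments, together with the observation that the maps square to the identity on generators, is the standard direct proof, and you rightly note at the end that one may simply cite Lusztig. There is nothing to compare here beyond observing that you have written out what the paper leaves implicit.
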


Recall that $\U$ is a Hopf algebra with a coproduct 
%
\begin{align}  \label{eq:coprod}
\begin{split}
\Delta:  &\U \longrightarrow \U \otimes \U,
 \\
 \Delta (E_{\alpha_i}) &= 1 \otimes E_{\alpha_i} + E_{\alpha_i} \otimes K^{-1}_{\alpha_i}, \\
 \Delta (F_{\alpha_i}) &= F_{\alpha_i} \otimes 1 +  K_{\alpha_i} \otimes F_{\alpha_i},\\
 \Delta (K_{\alpha_i}) &= K_{\alpha_i} \otimes K_{\alpha_i}.
 \end{split}
\end{align}
There is a unique $\Qq$-algebra homomorphism
$\epsilon: \U \rightarrow \Qq$, called counit, 
such that $\epsilon(E_{\alpha_i}) = 0$, $\epsilon(F_{\alpha_i}) = 0$, and $\epsilon(K_{\alpha_i}) =1$.

\subsection{Braid group actions and canonical bases} 
\label{subsec:CB}

Let $W := W_{A_{{k}}} = \mf{S}_{{k}+1}$ be the Weyl group of type $A_{{k}}$. 
Recall \cite{Lu94} for each $\alpha_i$ and each finite-dimensional $\U$-module $M$, a linear operator 
$T_{\alpha_i}$ on $M$ is defined by,  for $\lambda \in \Lambda$ and $m \in M_\lambda$,
\[
T_{\alpha_i}(m) = \sum_{a, b, c \ge 0; -a+b-c
=(\lambda, \alpha_i)}(-1)^b q^{b-ac}E^{(a)}_{\alpha_i}F^{(b)}_{\alpha_i}E^{(c)}_{\alpha_i} m.
\]
These $T_{\alpha_i}$'s induce automorphisms of $\U$, denoted by $T_{\alpha_i}$ as well,  such that 
\[
T_{\alpha_i}(um) = T_{\alpha_i}(u)T_{\alpha_i}(m), \qquad \text{ for all } u \in \U, m \in M.
\] 
As automorphisms on $\U$ and as $\Qq$-linear isomorphisms on $M$,
the $T_{\alpha_i}$'s satisfy the braid group relations (\cite[Theorem 39.4.3]{Lu94}):
\begin{align*}
T_{\alpha_i}T_{\alpha_j} &= T_{\alpha_j}T_{\alpha_i}, &\text{ if } |i-j| >1 ,\\
T_{\alpha_i}T_{\alpha_j}T_{\alpha_i} &= T_{\alpha_j}T_{\alpha_i}T_{\alpha_j}, &\text{ if } |i-j| =1,
\end{align*}
Hence for each $w \in W$, $T_w$ can be defined independent of the choices of reduced expressions of $w$. 
(The $T_{\alpha_i}$ here is  $T''_{i,+}$ in \cite{Lu94}).

Denote by $\ell (\cdot)$ the length function of $W$, and let $w_0$ be the longest element of $W$. The following lemma is well-known (cf. \cite[Lemma~1.5]{BW13}).

\begin{lem}\label{lem:Tw0}
The following identities hold:
\[
T_{w_0}(K_{\alpha_i}) =K^{-1}_{\alpha_{-i}}, \quad T_{w_0}(E_{\alpha_i}) = -F_{\alpha_{-i}}K_{\alpha_{-i}}, \quad 
T_{w_0}(F_{\alpha_{-i}}) = - K^{-1}_{\alpha_i}E_{\alpha_i}, \quad \text{ for } i \in \I.
\]
\end{lem}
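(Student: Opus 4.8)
The plan is to verify the three identities by direct computation using the explicit formula for the braid operator $T_{\alpha_i}$ together with the known combinatorics of $w_0$ acting on roots. Recall that for type $A_k$ the longest element $w_0$ of $W = \mf{S}_{k+1}$ sends each simple root $\alpha_i$ to $-\alpha_{-i}$ (this is exactly the statement that $w_0$ induces $-\inv$ on the root lattice, matching the automorphism $\alpha_i^{\inv} = \alpha_{-i}$ recorded in Section~\ref{subsec:theta}). Consequently $T_{w_0}$ must carry the weight space decomposition consistently with the map $\lambda \mapsto w_0(\lambda)$ on $\Lambda$, and in particular $T_{w_0}(K_{\alpha_i})$ must be proportional to $K_{w_0(\alpha_i)} = K_{-\alpha_{-i}} = K^{-1}_{\alpha_{-i}}$; since $T_w$ preserves the subgroup generated by the $K$'s and acts on it through $w$, we get the first identity on the nose.

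For the identities on $E_{\alpha_i}$ and $F_{\alpha_{-i}}$, the cleanest route is to reduce to the rank-one (resp.\ rank-two) situation. First I would recall the standard rank-one fact: for $\mathfrak{sl}_2$ the operator $s := T_{\alpha}$ satisfies $s(E_\alpha) = -F_\alpha K_\alpha$ and $s(F_\alpha) = -K_\alpha^{-1}E_\alpha$ (this is, e.g., \cite[Ch.~37]{Lu94} for $T''_{i,+}$). Then, writing a reduced expression for $w_0$ that begins or ends with the appropriate simple reflection, I would use that $T_{w_0} = T_{s_j} T_{w}$ for suitable $w$ with $\ell(w) = \ell(w_0) - 1$ and that $T_w$ permutes the Chevalley generators according to how $w$ permutes the corresponding roots, up to the usual sign and $K$-factor corrections that occur only when a simple root is sent to a negative root. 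Since $w_0$ is the unique element sending every positive root to a negative one, one can choose the reduced word so that at each stage exactly one generator is moved, and bookkeeping the signs and $K$-twists yields the asserted formulas. Alternatively — and this is probably the slickest presentation — one invokes the well-known relation $T_{w_0} \circ \psi = \psi \circ T_{w_0}^{-1}$ (compatibility of the braid operator with the bar involution) together with $\omega$ to cut the work in half: once $T_{w_0}(E_{\alpha_i}) = -F_{\alpha_{-i}}K_{\alpha_{-i}}$ is established, applying $\omega$ and using $\omega T_{w_0} = T_{w_0}^{-1}\omega$ (or the corresponding identity in \cite{Lu94}) gives the formula for $F$.

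Concretely, the steps in order: (i) record $w_0(\alpha_i) = -\alpha_{-i}$ and deduce the $K$-identity; (ii) quote the rank-one formulas for $T_\alpha$ on $E_\alpha, F_\alpha$; (iii) for a fixed $i$, pick a reduced expression $w_0 = s_{i_1}\cdots s_{i_N}$ adapted to $\alpha_i$, and argue by peeling off simple reflections that $T_{s_{i_1}}\cdots T_{s_{i_{N-1}}}(E_{\alpha_i})$ is (up to a sign and a power of $q$ absorbed in the $K$'s) of the form $\pm E_\beta K_?$ with $\beta$ the unique simple root such that $s_{i_N}(\beta) < 0$, then apply the last $T_{s_{i_N}}$; (iv) collect the signs to get $T_{w_0}(E_{\alpha_i}) = -F_{\alpha_{-i}}K_{\alpha_{-i}}$; (v) apply $\omega$ (using $\omega T_{w_0}\omega = T_{w_0}^{-1}$, together with the easily checked fact that $T_{w_0}$ and $T_{w_0}^{-1}$ give the same substitution formulas here because $w_0^2 = e$) to obtain $T_{w_0}(F_{\alpha_{-i}}) = -K^{-1}_{\alpha_i}E_{\alpha_i}$. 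The main obstacle is step (iii)–(iv): keeping track of the signs $(-1)^{\ell(\cdot)}$ and the $K$-factors as one conjugates a Chevalley generator by a long string of $T_{s_j}$'s is the only genuinely delicate bookkeeping, and it is exactly why the statement is phrased as a lemma quoting \cite{BW13} rather than proved from scratch; in practice I would simply cite \cite[Lemma~1.5]{BW13} (itself built on \cite[Ch.~37--39]{Lu94}) for this calculation, since the argument there applies verbatim in the present setup with the index set $\I$.
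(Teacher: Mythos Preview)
The paper does not give a proof of this lemma at all: it simply records the statement as ``well-known (cf.\ \cite[Lemma~1.5]{BW13})'' and moves on. Your proposal ultimately lands in the same place --- you conclude by citing \cite[Lemma~1.5]{BW13} as the source of the computation --- so on the bottom line you are aligned with the paper.

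One caution about the sketch you offer along the way: in step~(v) you assert that $T_{w_0}$ and $T_{w_0}^{-1}$ ``give the same substitution formulas here because $w_0^2 = e$.'' This is not correct. The relation $w_0^2 = e$ holds in the Weyl group, but $T_{w_0}$ lives in the braid group, and $T_{w_0}^2 \neq 1$ as an automorphism of $\U$ in general. So the shortcut you propose for deducing the $F$-formula from the $E$-formula via $\omega$ does not work as written; one either has to be more careful with the identity relating $\omega$ and the $T$'s (Lusztig's $\omega T''_{i,e}\omega = T'_{i,-e}$, which changes the version of the braid operator), or simply repeat the direct computation of (iii)--(iv) for $F_{\alpha_{-i}}$ separately. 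Since you are ultimately deferring to \cite{BW13} anyway, this does not affect the validity of your proposal, but the heuristic in (v) should not be trusted.
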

%
%

Let 
$\Lambda^+ = \{\lambda \in \Lambda \mid 2(\alpha_i , \lambda)/(\alpha_i,\alpha_i) \in {\N}, \forall i \in \I\}$
 be the set of dominant weights. Note that $\mu \in \Lambda^+$ if and only if $\mu ^{\inv} \in \Lambda^+$, 
 since the bilinear pairing $(\cdot,\cdot)$ on $ \Lambda$ is invariant under $\inv : \Lambda \rightarrow \Lambda$.

Let $M(\lambda)$ be the Verma module of $\U$ with highest weight $\lambda\in \Lambda$ and
with a highest weight vector denoted by $\eta$ or $\eta_{\lambda}$.  
We define a $\U$-module $^\omega M(\lambda)$, which 
has the same underlying vector space as $M(\lambda)$ but 
with the action twisted by the involution $\omega$ given in Proposition~\ref{prop:invol}.
When considering $\eta$ as a vector in $^\omega M(\lambda)$, 
we shall denote it by $\xi$ or $\xi_{-\lambda}$. 
The Verma module $M(\lambda)$ associated to dominant $\la \in \La^+$ has a unique finite-dimensional 
simple quotient $\U$-module, denoted by $L(\lambda)$. 
Similarly we define the $\U$-module $^\omega L(\lambda)$. 
For $\la \in \Lambda^+$, we let ${}_\mA L(\lambda) ={}_\mA\U^- \eta$ and $^\omega _\mA L(\lambda) ={}_\mA \U^+ \xi$ 
be the $\mA$-submodules of $L(\lambda)$ and $^\omega L(\lambda) $, respectively. 

We call a $\U$-module $M$ equipped with an anti-linear involution $\Abar$ {\em involutive} if
$
\Abar(u m) = \Abar(u) \Abar(m)$, $\forall u \in \U, m \in M$. The $\U$-modules ${^{\omega}L}(\lambda)$ and $L(\lambda)$ are both involutive. Given any two involutive $\U$-modules $M$ and $M'$, Lusztig showed that their tensor product $M \otimes M'$ is also involutive (\cite[\S27.31]{Lu94}).

In \cite{Lu90, Lu94} and \cite{Ka}, the canonical basis $\bold{B}$ of ${_\mA \U^+} \cong {_\mA \U^-}$ has been constructed. 
For any element $b \in \B$, when considered as an element in $\U^-$ or $\U^+$, we shall denote it by $b^-$ or $b^+$,  respectively. 
In \cite{Lu94}, subsets $\B(\lambda)$ of $\B$ is also constructed for each $\lambda \in \Lambda^+$, 
such that $\{b^-\eta_{\lambda} \mid b \in \B(\lambda)\}$ gives the canonical basis of ${}_\mA L(\lambda)$. 
Similarly $\{b^+ \xi_{-\lambda} \mid b \in \B(\lambda)\}$ gives the canonical basis of ${^{\omega}_\mA L(\lambda)}$.  
\section{Quantum Symmetric pairs} \label{sec:QSP}
In this section we shall develop the theory of $\imath$-canonical bases for the quantum symmetric pairs :
\[
(\U_q(\mathfrak{sl}_{2r+1}), \Ui_q(\mathfrak{sl}_{2r+1}))\quad \text{ and } \quad (\U_q(\mathfrak{sl}_{2r+2}), \Ui_q(\mathfrak{sl}_{2r+2})).
\]

The definitions of the quantum symmetric pairs shall be given in the first two sections, separately (see also \cite{ES13}). The theory of $\imath$-canonical bases is nevertheless uniform in both cases. Therefore after stating their definitions we shall formulate their general theory together. Section $2.1$ - Section $2.3$ are analogous to \cite[Part~1]{BW13} hence we shall omit the proofs almost entirely.  Some of the quantum symmetric paris considered here are of different parameters than the ones considered in \cite{BW13} (See also \cite{BW16} for more general construction). We refer to \cite{Ko14} for general theory of quantum symmetric pairs.

\subsection{The quantum symmetric pair $(\U_q(\mathfrak{sl}_{2r+1}), \Ui_q(\mathfrak{sl}_{2r+1}))$}\label{sec:Ui}
We define 
$$
\I^{\imath} _{2r} = (\hf +\N) \cap \I_{2r} 
= \Big\{\hf, \frac32, \ldots, r-\hf \Big\}.
$$ 
The Dynkin diagram of type $A_{2r}$ together with the involution $\inv$ is depicted as follows:
\begin{center}
\begin{tikzpicture}
\draw (-1.5,0) node {$A_{2r}:$};
 \draw[dotted]  (0.5,0) node[below]  {$\alpha_{-r+\hf}$} -- (2.5,0) node[below]  {$\alpha_{-\hf}$} ;
 \draw (2.5,0)
 -- (3.5,0) node[below]  {$\alpha_{\hf}$};
 \draw[dotted] (3.5,0) -- (5.5,0) node[below] {$\alpha_{r-\hf}$} ;
\draw (0.5,0) node (-r) {$\bullet$};
 \draw (2.5,0) node (-1) {$\bullet$};
\draw (3.5,0) node (1) {$\bullet$}; 
\draw (5.5,0) node (r) {$\bullet$};
\draw[<->] (-r.north east) .. controls (3,1) .. node[above] {$\theta$} (r.north west) ;
\draw[<->] (-1.north) .. controls (3,0.5) ..  (1.north) ;
\end{tikzpicture}
\end{center}

The algebra $\Ui_q(\mathfrak{sl}_{2r+1})$ is defined to be the associative algebra over $\Q(q)$ 
generated by $\be_{\alpha_i}$, $\bff_{\alpha_i}$, $\bk_{\alpha_i}$, $\bk^{-1}_{\alpha_i}$, $i \in \I^{\imath} _{2r} $,
subject to the following relations for $i, j \in \I^{\imath} _{2r} $:
\begin{align}
 \ibk{i} \ibk{i}^{-1} &= \ibk{i}^{-1} \ibk{i} =1, \displaybreak[0] \notag\\
 \ibk{i} \ibk{j} &=  \ibk{j}  \ibk{i}, \displaybreak[0] \notag\\
 \ibk{i} \ibe{j} \ibk{i}^{-1} &= q^{(\alpha_i-\alpha_{-i},\alpha_j)} \ibe{j}, \displaybreak[0] \notag\\
 \ibk{i} \ibff{j} \ibk{i}^{-1} &= q^{-(\alpha_i-\alpha_{-i},\alpha_j)}\ibff{j}, \displaybreak[0] \notag\\
 \be_{\alpha_i} \ibff{j} -\ibff{i} \ibe{j} &= \delta_{i,j} \frac{\bk_{\alpha_i}
 -\bk^{-1}_{\alpha_i}}{q-q^{-1}},         \qquad\; \qquad \text{if } i, j \neq \hf, \displaybreak[0] \notag\\
 \ibe{i}^2 \ibe{j} +\ibe{j} \ibe{i}^2 &= (q+q^{-1}) \ibe{i} \ibe{j} \ibe{i},  \qquad \text{if }  |i-j|=1, \displaybreak[0] \notag \\
 \ibff{i}^2 \ibff{j} +\ibff{j} \ibff{i}^2 &= (q+q^{-1}) \ibff{i} \ibff{j} \ibff{i}, \qquad \text{if } |i-j|=1,\displaybreak[0]\notag \\
 \ibe{i} \ibe{j} &= \ibe{j} \ibe{i},     \quad\qquad\qquad\qquad\; \text{if } |i-j|>1, \displaybreak[0]\notag \\
 \ibff{i} \ibff{j}  &=\ibff{j}  \ibff{i},   \quad\qquad\qquad\qquad\; \text{if } |i-j|>1,\displaybreak[0]\notag \\
 %
 %
  \ibff{\hf}^2\ibe{\hf} + \ibe{\hf}\ibff{\hf}^2
    &= (q+q^{-1}) \Big(\ibff{\hf}\ibe{\hf}\ibff{\hf}-q^2\ibff{\hf}\ibk{\hf}^{-1}-q^{-2}\ibff{\hf}\ibk{\hf} \big),\displaybreak[0] \label{eq:Serre:1}\\
 \ibe{\hf}^2\ibff{\hf} + \ibff{\hf}\ibe{\hf}^2
   &= (q+q^{-1}) \Big(\ibe{\hf}\ibff{\hf}\ibe{\hf}-q^{-2}\ibk{\hf}\ibe{\hf} -q^2\ibk{\hf}^{-1}\ibe{\hf} \Big).\displaybreak[0] \label{eq:Serre:2}
\end{align}
We introduce the divided powers 
$\be^{(a)}_{\alpha_i} = \be^a_{\alpha_i} / [a]!$, $\bff^{(a)}_{\alpha_i} = \bff^a_{\alpha_i} / [a]!$.

\begin{rem}
Note that the last two ``Serre" type relations \eqref{eq:Serre:1} and  \eqref{eq:Serre:2} are different from \cite[\S6.1]{BW13}.
\end{rem}

\begin{lem} 
 The algebra $\Ui_q(\mathfrak{sl}_{2r+1})$ has an anti-linear ($q \mapsto q^{-1}$) bar involution such that 
 $\ov{\bk}_{\alpha_i} = \bk^{-1}_{\alpha_i}$, $\ov{\be}_{\alpha_i} = \be_{\alpha_i}$, 
 and $\ov{\bff}_{\alpha_i} = \bff_{\alpha_i}$,  for all $i \in \I^{\imath} _{2r} $. 

(Sometimes we denote the bar involution on $\Ui_q(\mathfrak{sl}_{2r+1}))$ by $\ibar$.)

\end{lem}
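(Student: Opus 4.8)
The plan is to construct the bar involution on $\Ui_q(\mathfrak{sl}_{2r+1})$ by hand on generators and verify it respects all the defining relations. Concretely, define a $\Q$-linear map $\ibar$ on the free algebra on the generators $\be_{\alpha_i}, \bff_{\alpha_i}, \bk^{\pm 1}_{\alpha_i}$ ($i \in \I^\imath_{2r}$) by $q \mapsto q^{-1}$, $\be_{\alpha_i}\mapsto \be_{\alpha_i}$, $\bff_{\alpha_i}\mapsto \bff_{\alpha_i}$, $\bk_{\alpha_i}\mapsto \bk^{-1}_{\alpha_i}$, extended multiplicatively (and anti-linearly in $q$). I must then check that this descends to the quotient algebra $\Ui_q(\mathfrak{sl}_{2r+1})$, i.e. that $\ibar$ annihilates (up to sign/invertible scalar) each defining relation in the presentation, and finally that $\ibar^2 = \id$ so that it is an involution.

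First I would dispense with the easy relations. The $\bk\bk^{-1}=1$ and the commutation $\bk_{\alpha_i}\bk_{\alpha_j}=\bk_{\alpha_j}\bk_{\alpha_i}$ relations are clearly preserved since $\ibar$ just sends $\bk_{\alpha_i}$ to its inverse. For the relation $\ibk{i}\ibe{j}\ibk{i}^{-1} = q^{(\alpha_i-\alpha_{-i},\alpha_j)}\ibe{j}$, applying $\ibar$ gives $\ibk{i}^{-1}\ibe{j}\ibk{i} = q^{-(\alpha_i-\alpha_{-i},\alpha_j)}\ibe{j}$, which is exactly the original relation rewritten (multiply both sides appropriately); similarly for the $\bff$ version. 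The $E$--$F$ commutator relation $\be_{\alpha_i}\ibff{j}-\ibff{i}\ibe{j}=\delta_{i,j}(\bk_{\alpha_i}-\bk^{-1}_{\alpha_i})/(q-q^{-1})$ is preserved because $\ibar$ fixes $\be,\bff$ and the right-hand side: $(\bk_{\alpha_i}-\bk^{-1}_{\alpha_i})$ maps to $(\bk^{-1}_{\alpha_i}-\bk_{\alpha_i}) = -(\bk_{\alpha_i}-\bk^{-1}_{\alpha_i})$ while $(q-q^{-1})\mapsto (q^{-1}-q)=-(q-q^{-1})$, so the quotient is unchanged. The quantum Serre relations among the $\be$'s (and among the $\bff$'s) away from the node $\hf$ are fixed since $q+q^{-1}$ is bar-invariant and only $\be$'s (resp.\ $\bff$'s) appear.

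The substantive check is the pair of modified ``Serre'' relations \eqref{eq:Serre:1} and \eqref{eq:Serre:2}, which is where I expect the only real work to lie. Applying $\ibar$ to \eqref{eq:Serre:1} term by term: the left side $\ibff{\hf}^2\ibe{\hf}+\ibe{\hf}\ibff{\hf}^2$ is fixed, $(q+q^{-1})$ is fixed, $\ibff{\hf}\ibe{\hf}\ibff{\hf}$ is fixed, and the two correction terms transform as $-q^2\ibff{\hf}\ibk{\hf}^{-1}\mapsto -q^{-2}\ibff{\hf}\ibk{\hf}$ and $-q^{-2}\ibff{\hf}\ibk{\hf}\mapsto -q^2\ibff{\hf}\ibk{\hf}^{-1}$; these simply swap, so the relation is mapped to itself. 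The same bookkeeping shows \eqref{eq:Serre:2} is bar-invariant: the correction terms $-q^{-2}\ibk{\hf}\ibe{\hf}$ and $-q^2\ibk{\hf}^{-1}\ibe{\hf}$ interchange under $\ibar$. This is precisely the reason the specific form of the parameters in \eqref{eq:Serre:1}--\eqref{eq:Serre:2} (with $q^{\pm 2}$ symmetrically placed) was chosen, so the self-symmetry of the relation under $q\mapsto q^{-1}$ together with $\bk_{\hf}\leftrightarrow\bk_{\hf}^{-1}$ is what makes the argument go through. Finally, since $\ibar$ fixes all generators and inverts $q$, it is immediate that $\ibar^2=\id$, so $\ibar$ is an anti-linear involution of the $\Q$-algebra $\Ui_q(\mathfrak{sl}_{2r+1})$ with the stated values on generators. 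I would remark that this mirrors \cite[Lemma in \S6]{BW13}, the only difference being the new Serre terms, whose invariance was just verified.
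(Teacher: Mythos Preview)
Your proof is correct and is exactly the standard direct verification one would expect; the paper in fact omits the proof entirely, noting that \S\ref{sec:Ui}--\S2.3 are analogous to \cite[Part~1]{BW13} and so the argument there carries over. Your observation that the only nontrivial point is the bar-invariance of the modified Serre relations \eqref{eq:Serre:1}--\eqref{eq:Serre:2}, where the correction terms $-q^{\pm 2}\ibff{\hf}\ibk{\hf}^{\mp 1}$ (resp.\ $-q^{\mp 2}\ibk{\hf}^{\pm 1}\ibe{\hf}$) simply swap under $q\mapsto q^{-1}$, $\bk_{\hf}\mapsto\bk_{\hf}^{-1}$, is precisely the content that distinguishes this lemma from its predecessor in \cite{BW13}.
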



\begin{prop} \label{int:prop:embedding} 
There is an injective $\Qq$-algebra homomorphism $\imath :  \Ui_q(\mathfrak{sl}_{2r+1}) \rightarrow \U_q(\mathfrak{sl}_{2r+1})$ defined by, for all  $i \in \I^{\imath} _{2r}$,
 \begin{align*}
\bk_{\alpha_i} \mapsto K_{\alpha_i}K^{-1}_{\alpha_{-i}}, \quad
\be_{\alpha_i} \mapsto  E_{\alpha_i} +  F_{\alpha_{-i}}K^{-1}_{\alpha_i}, \quad
\bff_{\alpha_i} \mapsto K^{-1}_{\alpha_{-i}} F_{\alpha_i} + E_{\alpha_{-i}}.
\end{align*}
\end{prop}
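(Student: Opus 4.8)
The plan is to verify Proposition~\ref{int:prop:embedding} by the standard two-step argument for coideal subalgebra embeddings: first check that the proposed assignment respects the defining relations of $\Ui_q(\mathfrak{sl}_{2r+1})$ (so that $\imath$ is a well-defined algebra homomorphism into $\U_q(\mathfrak{sl}_{2r+1})$), and then check injectivity. For the first step I would set
\[
\widetilde{\be}_{\alpha_i} = E_{\alpha_i} + F_{\alpha_{-i}}K^{-1}_{\alpha_i}, \quad
\widetilde{\bff}_{\alpha_i} = K^{-1}_{\alpha_{-i}} F_{\alpha_i} + E_{\alpha_{-i}}, \quad
\widetilde{\bk}_{\alpha_i} = K_{\alpha_i}K^{-1}_{\alpha_{-i}},
\]
for $i \in \I^{\imath}_{2r}$, and verify each relation directly inside $\U_q(\mathfrak{sl}_{2r+1})$ using the defining relations and the Hopf structure recalled in Section~\ref{sec:prelim}. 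The $\bk$--$\bk$, $\bk$--$\be$, $\bk$--$\bff$ relations reduce to computing $(\alpha_i - \alpha_{-i}, \alpha_j)$ and are immediate from $K_{\alpha_i}E_{\alpha_j}K_{\alpha_i}^{-1} = q^{(\alpha_i,\alpha_j)}E_{\alpha_j}$ together with $\inv(\alpha_i) = \alpha_{-i}$. The Serre relations among $\{\widetilde{\be}_{\alpha_i}\}$ (resp. $\{\widetilde{\bff}_{\alpha_i}\}$) for $|i-j|>1$ or $|i-j|=1$ with $i,j \neq \hf$ follow because the indices $i,-i,j,-j$ then lie far apart in the $A_{2r}$ diagram, so the cross terms involving both $E$'s and $F$'s vanish and one is left with the ordinary quantum Serre relations for $E$ and for $F$ separately; the $\be$--$\bff$ relation for $i,j \neq \hf$ is similarly a short computation using $E_{\alpha_i}F_{\alpha_j} - F_{\alpha_j}E_{\alpha_i} = \delta_{ij}\frac{K_{\alpha_i}-K^{-1}_{\alpha_i}}{q-q^{-1}}$ and the fact that $\alpha_i, \alpha_{-i}$ are non-adjacent when $i \neq \hf$.

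The genuine work is in the two inhomogeneous Serre relations \eqref{eq:Serre:1} and \eqref{eq:Serre:2} at the distinguished node $i = \hf$, where $\alpha_{\hf}$ and $\alpha_{-\hf}$ \emph{are} adjacent in the $A_{2r}$ diagram (this is exactly the branching indicated by the lower $\theta$-arc in the Dynkin diagram). Here substituting $\widetilde{\be}_{\hf} = E_{\alpha_{\hf}} + F_{\alpha_{-\hf}}K^{-1}_{\alpha_{\hf}}$ and $\widetilde{\bff}_{\hf} = K^{-1}_{\alpha_{-\hf}}F_{\alpha_{\hf}} + E_{\alpha_{-\hf}}$ produces many terms after expansion; one must repeatedly commute $K$'s past $E$'s and $F$'s, use the rank-$2$ type $A_2$ Serre relations for the pairs $(\alpha_{\hf},\alpha_{-\hf})$, and collect the surviving terms. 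I expect this to reproduce precisely the right-hand side $(q+q^{-1})\big(\widetilde{\bff}_{\hf}\widetilde{\be}_{\hf}\widetilde{\bff}_{\hf} - q^2 \widetilde{\bff}_{\hf}\widetilde{\bk}_{\hf}^{-1} - q^{-2}\widetilde{\bff}_{\hf}\widetilde{\bk}_{\hf}\big)$, with the correction terms $q^{\pm 2}\widetilde{\bff}_{\hf}\widetilde{\bk}_{\hf}^{\mp 1}$ arising from the $\delta$-term of the commutator $[\widetilde{\be}_{\hf}, \widetilde{\bff}_{\hf}]$-type contributions; this is the analogue of the computation in \cite[\S6]{BW13}, differing only in the precise coefficients because of the different choice of parameters (the remark after \eqref{eq:Serre:2} flags exactly this difference). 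Since Section~$2$ states that the proofs here parallel \cite[Part~1]{BW13} and are largely omitted, in the write-up I would indicate the structure of this verification and refer to the analogous computation there, while noting the coefficient change.

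For injectivity, the cleanest route is a triangular-decomposition / PBW argument: the elements $\widetilde{\bk}_{\alpha_i} = K_{\alpha_i}K^{-1}_{\alpha_{-i}}$ are linearly independent group-like elements in $\U^0$, and the leading term (in a suitable filtration on $\U^-\U^0\U^+$, e.g. the one given by total degree in the $F$'s or by a weight/triangular grading) of the image of a PBW monomial in the $\be$'s, $\bff$'s, $\bk$'s is a nonzero PBW monomial of $\U_q(\mathfrak{sl}_{2r+1})$ — concretely, $\widetilde{\be}_{\alpha_i}$ has leading term $E_{\alpha_i}$ and $\widetilde{\bff}_{\alpha_i}$ has leading term $E_{\alpha_{-i}}$ with respect to the filtration by $F$-degree, so distinct ordered monomials map to things with distinct, linearly independent leading terms. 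Hence $\imath$ is injective. The only subtlety is choosing the filtration so that the leading-term map is well-behaved on all of $\Ui_q(\mathfrak{sl}_{2r+1})$; this is again exactly as in \cite{BW13}, and I would simply cite that. In summary, the main obstacle is the brute-force verification of the two rank-one inhomogeneous Serre relations at $i = \hf$, and everything else is routine bookkeeping with the $A$-type relations.
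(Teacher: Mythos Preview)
Your proposal is correct and matches the approach the paper adopts: the paper gives no proof of its own for this proposition, explicitly stating at the start of Section~\ref{sec:QSP} that Sections~2.1--2.3 are analogous to \cite[Part~1]{BW13} and the proofs are omitted almost entirely. Your two-step plan (direct verification of the relations inside $\U_q(\mathfrak{sl}_{2r+1})$, with the only nontrivial work being the inhomogeneous Serre relations \eqref{eq:Serre:1}--\eqref{eq:Serre:2} at $i=\hf$, followed by injectivity via a leading-term/filtration argument) is exactly the argument of \cite[Proposition~6.2]{BW13} adapted to the present parameters, which is precisely what the paper intends the reader to do.
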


\begin{rem}
In \cite[Proposition~2.2]{BW13}, we consider the embedding that maps $\be_{\alpha_i}$ to $E_{\alpha_i} +  q^{-\delta_{i, \hf}} F_{\alpha_{-i}}K^{-1}_{\alpha_i}$ and maps $\bff_{\alpha_i}$ to $q^{\delta_{i, \hf}} K^{-1}_{\alpha_{-i}} F_{\alpha_i} + E_{\alpha_{-i}}$. 
\end{rem}
%

Note that $E_{\alpha_i} (K^{-1}_{\alpha_i}F_{\alpha_{-i}}) 
= q^{2} (K^{-1}_{\alpha_i}F_{\alpha_{-i}}) E_{\alpha_i}$ for $i \in \I^{\imath} _{2r} $. 
We have for $i \in \I^{\imath} _{2r} $, 
\begin{align}
\label{int:eq:beZ} \imath(\be^{(a)}_{\alpha_i}) 
 &= \sum^{a}_{j=0}  q^{j(a-j)}{(F_{\alpha_{-i}} K^{-1}_{\alpha_i} )^j \over [j]!}\frac{E^{a-j}_{\alpha_i}}{[a-j]!},
   \\
\label{int:eq:bffZ} \imath(\bff^{(a)}_{\alpha_i}) 
 &= \sum^{a}_{j=0}  q^{j(a-j)}{(K^{-1}_{\alpha_{-i}}F_{\alpha_{i}})^j \over [j]!}\frac{E^{a-j}_{\alpha_{-i}}}{[a-j]!}.
\end{align}

\begin{prop} \label{int:prop:coproduct}
The coproduct $\Delta$ on $\U_q(\mathfrak{sl}_{2r+1})$ restricts under the embedding $\imath$ to
a $\Qq$-algebra homomorphism 
\[
\Delta : \Ui_q(\mathfrak{sl}_{2r+1}) \longrightarrow \Ui_q(\mathfrak{sl}_{2r+1}) \otimes \U_q(\mathfrak{sl}_{2r+1})
\]
such that for all $i \in \I^{\imath} _{2r} $, 
\begin{align*}
\Delta(\bk_{\alpha_i}) &= \bk_{\alpha_i} \otimes K_{\alpha_i} K^{-1}_{\alpha_{-i}},
 \\
\Delta({\be_{\alpha_i}}) &= 1 \otimes E_{\alpha_i} + \be_{\alpha_i} \otimes 
 K^{-1}_{\alpha_i} + \bk^{-1}_{\alpha_i} \otimes  F_{\alpha_{-i}} K^{-1}_{\alpha_i} ,
  \\
\Delta (\bff_{\alpha_i}) &= \bk_{\alpha_i} \otimes K^{-1}_{\alpha_{-i}} F_{\alpha_i}
  + \bff_{\alpha_i} \otimes K^{-1}_{\alpha_{-i}} + 1 \otimes E_{{\alpha_{-i}}}.
\end{align*}
Similarly, the counit $\epsilon$ of $\U_q(\mathfrak{sl}_{2r+1})$ induces a $\Qq$-algebra homomorphism  
\[
\epsilon : \Ui_q(\mathfrak{sl}_{2r+1}) \rightarrow \Qq
\]
such that 
$\epsilon(\be_{\alpha_i}) =\epsilon(\bff_{\alpha_i})=0$ and $\epsilon(\bk_{\alpha_i}) =1$ for all $i \in \I^{\imath} _{2r} $.
\end{prop}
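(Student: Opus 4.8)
The plan is to derive the entire statement from the explicit formula for $\imath$ in \propref{int:prop:embedding}, the coproduct \eqnref{eq:coprod}, and the counit of $\U_q(\mathfrak{sl}_{2r+1})$, by a direct computation on the Chevalley generators of $\Ui_q(\mathfrak{sl}_{2r+1})$. First I would note that $\Delta : \U_q(\mathfrak{sl}_{2r+1}) \to \U_q(\mathfrak{sl}_{2r+1})^{\otimes 2}$ is an algebra homomorphism and that, by \propref{int:prop:embedding}, $\imath$ identifies $\Ui_q(\mathfrak{sl}_{2r+1})$ with the subalgebra $\imath(\Ui_q(\mathfrak{sl}_{2r+1}))$; hence $\Delta \circ \imath$ is an algebra homomorphism $\Ui_q(\mathfrak{sl}_{2r+1}) \to \U_q(\mathfrak{sl}_{2r+1})^{\otimes 2}$, and it suffices to check that the image of each of $\bk_{\alpha_i}$, $\be_{\alpha_i}$, $\bff_{\alpha_i}$ lies in $\imath(\Ui_q(\mathfrak{sl}_{2r+1})) \otimes \U_q(\mathfrak{sl}_{2r+1})$. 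Since the latter is a subalgebra of $\U_q(\mathfrak{sl}_{2r+1})^{\otimes 2}$ and $\imath \otimes \id$ is injective (we work over the field $\Qq$), the map $\Delta \circ \imath$ then factors uniquely through $\imath \otimes \id$, and the resulting algebra homomorphism $\Ui_q(\mathfrak{sl}_{2r+1}) \to \Ui_q(\mathfrak{sl}_{2r+1}) \otimes \U_q(\mathfrak{sl}_{2r+1})$ is the asserted $\Delta$.

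Next I would perform the three computations. For $\bk_{\alpha_i} \mapsto K_{\alpha_i} K^{-1}_{\alpha_{-i}}$ this is immediate from multiplicativity of $\Delta$ on the $K$'s. For $\be_{\alpha_i} \mapsto E_{\alpha_i} + F_{\alpha_{-i}} K^{-1}_{\alpha_i}$ I would expand $\Delta(E_{\alpha_i})$ and $\Delta(F_{\alpha_{-i}})\Delta(K^{-1}_{\alpha_i})$ via \eqnref{eq:coprod} and sort the four resulting terms: the two with right tensor factor $K^{-1}_{\alpha_i}$ combine to $(E_{\alpha_i} + F_{\alpha_{-i}} K^{-1}_{\alpha_i}) \otimes K^{-1}_{\alpha_i} = \imath(\be_{\alpha_i}) \otimes K^{-1}_{\alpha_i}$, while the remaining two are $1 \otimes E_{\alpha_i}$ and $(K_{\alpha_{-i}} K^{-1}_{\alpha_i}) \otimes F_{\alpha_{-i}} K^{-1}_{\alpha_i} = \imath(\bk^{-1}_{\alpha_i}) \otimes F_{\alpha_{-i}} K^{-1}_{\alpha_i}$; summing gives precisely the stated formula for $\Delta(\be_{\alpha_i})$. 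The computation for $\bff_{\alpha_i} \mapsto K^{-1}_{\alpha_{-i}} F_{\alpha_i} + E_{\alpha_{-i}}$ is entirely parallel: the two terms with right factor $K^{-1}_{\alpha_{-i}}$ combine to $\imath(\bff_{\alpha_i}) \otimes K^{-1}_{\alpha_{-i}}$, the term $(K^{-1}_{\alpha_{-i}} K_{\alpha_i}) \otimes K^{-1}_{\alpha_{-i}} F_{\alpha_i}$ is rewritten as $\imath(\bk_{\alpha_i}) \otimes K^{-1}_{\alpha_{-i}} F_{\alpha_i}$, and $1 \otimes E_{\alpha_{-i}}$ remains. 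Finally, for the counit I would simply compute $\epsilon(\imath(\be_{\alpha_i})) = \epsilon(E_{\alpha_i}) + \epsilon(F_{\alpha_{-i}})\epsilon(K^{-1}_{\alpha_i}) = 0$, likewise $\epsilon(\imath(\bff_{\alpha_i})) = 0$, and $\epsilon(\imath(\bk_{\alpha_i})) = 1$, so $\epsilon \circ \imath$ is the required homomorphism $\Ui_q(\mathfrak{sl}_{2r+1}) \to \Qq$.

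I do not expect a serious obstacle; this is the analogue of the corresponding statement in \cite[Part~1]{BW13}, and, as observed in the Remark following \propref{int:prop:embedding}, the modified choice of parameters in $\imath$ does not interfere with the argument. The only steps needing a little care are purely bookkeeping: recognizing the various partial sums as $\imath$ applied to the generators of $\Ui_q(\mathfrak{sl}_{2r+1})$, using the identities $\imath(\bk^{-1}_{\alpha_i}) = K_{\alpha_{-i}} K^{-1}_{\alpha_i}$ and $\imath(\bk_{\alpha_i}) = K^{-1}_{\alpha_{-i}} K_{\alpha_i}$, and the routine verification that $\Delta \circ \imath$ genuinely corestricts to $\Ui_q(\mathfrak{sl}_{2r+1}) \otimes \U_q(\mathfrak{sl}_{2r+1})$ rather than merely landing in the full tensor square.
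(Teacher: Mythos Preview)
Your proposal is correct and is exactly the direct computation the paper has in mind: the paper omits the proof entirely, referring to the analogous statement in \cite[Part~1]{BW13}, where the same generator-by-generator verification via \eqnref{eq:coprod} and the embedding $\imath$ is carried out. Your bookkeeping (grouping the four terms for $\Delta(\imath(\be_{\alpha_i}))$ and $\Delta(\imath(\bff_{\alpha_i}))$, recognizing $\imath(\bk_{\alpha_i}^{\pm 1})$, and the counit check) is accurate and complete.
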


It follows by Proposition~\ref{int:prop:coproduct}
that $\Ui_q(\mathfrak{sl}_{2r+1})$  is a  (right) coideal subalgebra of $\U$. 
The map $\Delta : \Ui_q(\mathfrak{sl}_{2r+1}) \rightarrow \Ui_q(\mathfrak{sl}_{2r+1}) \otimes \U_q(\mathfrak{sl}_{2r+1})$ will be called the coproduct of $\Ui_q(\mathfrak{sl}_{2r+1})$ 
and $ \epsilon : \Ui_q(\mathfrak{sl}_{2r+1}) \rightarrow \Qq$ will be called the  counit of $\Ui_q(\mathfrak{sl}_{2r+1})$.  
The coproduct $\Delta : \Ui_q(\mathfrak{sl}_{2r+1}) \rightarrow \Ui_q(\mathfrak{sl}_{2r+1}) \otimes \U_q(\mathfrak{sl}_{2r+1})$  is coassociative, i.e.,
$
(1 \otimes \Delta) \Delta = (\Delta \otimes 1)\Delta: \Ui_q(\mathfrak{sl}_{2r+1}) \rightarrow \Ui_q(\mathfrak{sl}_{2r+1}) \otimes \U_q(\mathfrak{sl}_{2r+1}) \otimes \U_q(\mathfrak{sl}_{2r+1})$.
The counit map $\epsilon$ makes $\Qq$ a (trivial) $\Ui_q(\mathfrak{sl}_{2r+1})$-module.  
Let $m : \U_q(\mathfrak{sl}_{2r+1}) \otimes \U_q(\mathfrak{sl}_{2r+1}) \rightarrow \U_q(\mathfrak{sl}_{2r+1})$ denote the multiplication map. 
We have 
$
m (\epsilon \otimes 1)\Delta = \imath : \Ui_q(\mathfrak{sl}_{2r+1}) \longrightarrow \U_q(\mathfrak{sl}_{2r+1})
$ by direct computation.


\subsection{The quantum symmetric pair $(\U_q(\mathfrak{sl}_{2r+2}), \Ui_q(\mathfrak{sl}_{2r+2}))$}\label{sec:Uj}
We set
\begin{equation}  \label{eq:Ihf}
\I^{\imath}_{2r+1}  :=\Z_{>0} \cap \I_{2r+1} =\{1, \ldots, r\}.
\end{equation}

The Dynkin diagram of type $A_{2r+1}$ together with the involution $\inv$
can be depicted as follows: 

\begin{center}
\begin{tikzpicture}

\draw (-2,0) node {$A_{2r+1}:$};
 \draw[dotted]  (0,0) node[below] {$\alpha_{-r}$} -- (2,0) node[below] {$\alpha_{-1}$} ;
 \draw (2,0) -- (3,0) node[below]  {$\alpha_{0}$}
 -- (4,0) node[below] {$\alpha_{1}$};
 \draw[dotted] (4,0) -- (6,0) node[below] {$\alpha_{r}$} ;
\draw (0,0) node (-r) {$\bullet$};
 \draw (2,0) node (-1) {$\bullet$};
\draw (3,0) node (0) {$\bullet$};
\draw (4,0) node (1) {$\bullet$}; 
\draw (6,0) node (r) {$\bullet$};
\draw[<->] (-r.north east) .. controls (3,1.5) .. node[above] {$\theta$} (r.north west) ;
\draw[<->] (-1.north) .. controls (3,1) ..  (1.north) ;
\draw[<->] (0) edge[<->, loop above] (0);
\end{tikzpicture}
\end{center}

The algebra $\Ui_q(\mathfrak{sl}_{2r+2})$ is defined to be the associative algebra over $\Q(q)$ generated by  
$\be_{\alpha_i}$, $\bff_{\alpha_i}$, $\bk_{\alpha_i}$, $\bk^{-1}_{\alpha_i}$ ($i \in \I^{\imath}_{2r+1}$) , and $\bt$, 
subject to the following relations for $i$, $j \in \I^{\imath}_{2r+1}$:
\begin{align*}
 \bk_{\alpha_i} \bk_{\alpha_i}^{-1} &= \bk_{\alpha_i}^{-1} \bk_{\alpha_i} =1,\displaybreak[0]\\
 \bk_{\alpha_i} \bk_{\alpha_j} &= \bk_{\alpha_j} \bk_{\alpha_i}, \displaybreak[0]\\
 \bk_{\alpha_i} \be_{\alpha_j} \bk_{\alpha_i}^{-1} &= q^{(\alpha_i-\alpha_{-i}, \alpha_j)} \be_{\alpha_j}, \displaybreak[0]\\
 \bk_{\alpha_i} \bff_{\alpha_j} \bk_{\alpha_i}^{-1} &= q^{-(\alpha_i-\alpha_{-i}, \alpha_j)}
 \bff_{\alpha_j}, \displaybreak[0]\\
 \bk_{\alpha_i}\bt\bk^{-1}_{\alpha_i} &= \bt, \displaybreak[0]\\
 \be_{\alpha_i} \bff_{\alpha_j} -\bff_{\alpha_j} \be_{\alpha_i} &= \delta_{i,j} \frac{\bk_{\alpha_i}
 -\bk^{-1}_{\alpha_i}}{q-q^{-1}}, \displaybreak[0]\\
 \be_{\alpha_i}^2 \be_{\alpha_j} +\be_{\alpha_j} \be_{\alpha_i}^2 &= (q+q^{-1}) \be_{\alpha_i} \be_{\alpha_j} \be_{\alpha_i}, 
   \ \ \quad\quad &\text{if }& |i-j|=1, \displaybreak[0]\\
 \be_{\alpha_i} \be_{\alpha_j} &= \be_{\alpha_j} \be_{\alpha_i}, \ \qquad\qquad\ \ \ \ \qquad &\text{if }& |i-j|>1, \displaybreak[0]\\
 \bff_{\alpha_i}^2 \bff_{\alpha_j} +\bff_{\alpha_j} \bff_{\alpha_i}^2 &= (q+q^{-1}) \bff_{\alpha_i} \bff_{\alpha_j} \bff_{\alpha_i},
    \ \ \quad\quad &\text{if }& |i-j|=1,\displaybreak[0]\\
 \bff_{\alpha_i} \bff_{\alpha_j}  &= \bff_{\alpha_j}  \bff_{\alpha_i},  \ \qquad\qquad\ \ \ \ \qquad &\text{if }& |i-j|>1, \displaybreak[0]\\
 \be_{\alpha_i}\bt &=\bt\be_{\alpha_i}, \quad\qquad\quad &\text{if }&  i > 1, \displaybreak[0]\\
 \be_{\alpha_1}^2\bt + \bt\be_{\alpha_1}^2 &= (q+q^{-1}) \be_{\alpha_1}\bt\be_{\alpha_1},\displaybreak[0]\\
 \bt^2\be_{\alpha_1} + \be_{\alpha_1}\bt^2 &= (q + q^{-1}) \bt\be_{\alpha_1}\bt + \be_{\alpha_1},\displaybreak[0]\\
 \bff_{\alpha_i}\bt &=\bt\bff_{\alpha_i}, \quad\qquad&\text{if }&  i > 1, \displaybreak[0]\\
 \bff_{\alpha_1}^2\bt + \bt\bff_{\alpha_1}^2 &= (q+q^{-1}) \bff_{\alpha_1}\bt\bff_{\alpha_1},\displaybreak[0]\\
 \bt^2\bff_{\alpha_1} + \bff_{\alpha_1}\bt^2 &= (q + q^{-1}) \bt\bff_{\alpha_1}\bt + \bff_{\alpha_1}.\displaybreak[0]
\end{align*}
We introduce the divided powers $\be^{(a)}_{\alpha_i} = \be^a_{i} / [a]!$, $\bff^{(a)}_{\alpha_i} = \bff^a_{i} / [a]!$ for  $a \ge 0$, $i \in \I^{\imath}_{2r+1}$.

\begin{rem}\label{rem:Uisame}
The generating relations of the algebra $\Ui_q(\mathfrak{sl}_{2r+2})$ are the same as the generating relations of the algebra considered in \cite[\S2.1]{BW13}.
\end{rem}

\begin{lem}   \label{lem:3inv}
The $\Q$-algebra $\Ui_q(\mathfrak{sl}_{2r+2})$ has an anti-linear ($q \mapsto q^{-1}$) bar involution such that 
$\ov{\bk}_{\alpha_i} = \bk^{-1}_{\alpha_i}$, 
$\ov{\be}_{\alpha_i} = \be_{\alpha_i}$, $\ov{\bff}_{\alpha_i} = \bff_{\alpha_i}$, and  $\ov{\bt} = \bt$ for all $i \in \I^{\imath}_{2r+1}$. 

(Sometimes we denote the bar involution on $\Ui_q(\mathfrak{sl}_{2r+2})$ by $\Bbar$.)
\end{lem}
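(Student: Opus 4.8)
The plan is to use the universal property of the presentation of $\Ui_q(\mathfrak{sl}_{2r+2})$. First I would let $\mathcal{F}$ be the free $\Q$-algebra on the symbols $\be_{\alpha_i}$, $\bff_{\alpha_i}$, $\bk_{\alpha_i}$, $\bk_{\alpha_i}^{-1}$ ($i \in \I^{\imath}_{2r+1}$) and $\bt$, and let $\widetilde{\Bbar}\colon \mathcal{F} \to \mathcal{F}$ be the unique anti-linear ($q \mapsto q^{-1}$) ring homomorphism fixing each of these symbols. Writing $\mathcal{J}$ for the two-sided ideal of $\mathcal{F}$ generated by the listed defining relations, so that $\Ui_q(\mathfrak{sl}_{2r+2}) = \mathcal{F}/\mathcal{J}$, it then suffices to check $\widetilde{\Bbar}(\mathcal{J}) \subseteq \mathcal{J}$: in that case $\widetilde{\Bbar}$ descends to an anti-linear ring endomorphism $\Bbar$ of $\Ui_q(\mathfrak{sl}_{2r+2})$ fixing all generators, and since $\Bbar^2$ is then a $\Q(q)$-linear endomorphism fixing all generators it must equal $\id$, so $\Bbar$ is the asserted involution.

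The verification that $\widetilde{\Bbar}$ preserves $\mathcal{J}$ amounts to applying $\Bbar$ formally to each defining relation and checking that the result is again a consequence of the relations. The point is that every scalar occurring is bar-invariant: the $\delta_{i,j}$ and the factors $q+q^{-1}$ are fixed by $q \mapsto q^{-1}$, and for the $\be$--$\bff$ commutator one computes $\Bbar\big((\bk_{\alpha_i}-\bk_{\alpha_i}^{-1})/(q-q^{-1})\big) = (\bk_{\alpha_i}^{-1}-\bk_{\alpha_i})/(q^{-1}-q) = (\bk_{\alpha_i}-\bk_{\alpha_i}^{-1})/(q-q^{-1})$. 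The relations among the $\bk_{\alpha_i}^{\pm1}$ alone, and those giving the $\bk_{\alpha_i}$-conjugation action on $\be_{\alpha_j}$, $\bff_{\alpha_j}$ and $\bt$, are sent — after the substitutions $\bk_{\alpha_i}\mapsto \bk_{\alpha_i}^{-1}$, $q\mapsto q^{-1}$ — to equivalent relations (essentially their own inverses), hence lie in $\mathcal{J}$; and all the homogeneous quantum Serre and commutation relations among the $\be$'s, among the $\bff$'s, and between $\bt$ and $\be_{\alpha_i},\bff_{\alpha_i}$ ($i>1$) are manifestly stable, since $\Bbar$ is a ring homomorphism (so it preserves the order of products) fixing the generators and their coefficients.

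The only relations not of homogeneous quantum Serre type are the inhomogeneous ones $\bt^2\be_{\alpha_1}+\be_{\alpha_1}\bt^2 = (q+q^{-1})\bt\be_{\alpha_1}\bt + \be_{\alpha_1}$ and its $\bff_{\alpha_1}$-analogue; but here too every coefficient, namely $q+q^{-1}$ and $1$, is bar-invariant, so these relations are fixed by $\Bbar$ as well. This exhausts the list, giving $\widetilde{\Bbar}(\mathcal{J}) \subseteq \mathcal{J}$. I do not anticipate a genuine obstacle here: the whole argument is bookkeeping, and the only mildly non-routine point is the pair of inhomogeneous $\bt$-relations, which are nonetheless immediate. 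As an alternative, by Remark~\ref{rem:Uisame} the present lemma is literally the statement established in \cite[\S2.1]{BW13}, and it can also be deduced from the compatibility of $\Bbar$ with the bar involution $\psi$ of $\U$ under the coideal embedding $\imath$.
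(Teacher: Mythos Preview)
Your argument is correct in substance, but there is a slip in the first paragraph: you say $\widetilde{\Bbar}$ ``fixes each of these symbols,'' yet the lemma requires $\ov{\bk}_{\alpha_i}=\bk_{\alpha_i}^{-1}$, and indeed your later computations (the commutator identity and the phrase ``after the substitutions $\bk_{\alpha_i}\mapsto\bk_{\alpha_i}^{-1}$'') use exactly that. Simply replace the offending clause by ``fixing $\be_{\alpha_i},\bff_{\alpha_i},\bt$ and interchanging $\bk_{\alpha_i}\leftrightarrow\bk_{\alpha_i}^{-1}$,'' and the argument goes through verbatim. (One should also observe that the relation $\bk_{\alpha_i}\bk_{\alpha_i}^{-1}=\bk_{\alpha_i}^{-1}\bk_{\alpha_i}=1$ is preserved under this swap, so that $\Bbar$ is well defined on the quotient and $\Bbar^2$ still fixes every generator.)

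As for comparison with the paper: the paper gives no proof at all here, declaring at the start of \S\ref{sec:QSP} that \S2.1--\S2.3 are analogous to \cite[Part~1]{BW13} and that proofs are omitted, and Remark~\ref{rem:Uisame} makes explicit that the presentation is identical to the one in \cite[\S2.1]{BW13}. Your final sentence already captures this, so your write-up is strictly more detailed than what the paper provides.
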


\begin{prop} \label{prop:embedding}  
There is an injective $\Qq$-algebra homomorphism $\imath :  \Ui_q(\mathfrak{sl}_{2r+2}) \rightarrow \U_q(\mathfrak{sl}_{2r+2})$
which sends (for $i \in \I^{\imath}_{2r+1}$)
 \begin{align*}
\bk_{\alpha_i} \mapsto K_{\alpha_i}K^{-1}_{\alpha_{-i}}, \qquad & \bt \mapsto E_{\alpha_0} +qF_{\alpha_0}K^{-1}_{\alpha_0} \\
 \be_{\alpha_i} \mapsto  E_{\alpha_i} + K^{-1}_{\alpha_i}F_{\alpha_{-i}},\qquad 
& \bff_{\alpha_i} \mapsto F_{\alpha_i} K^{-1}_{\alpha_{-i}}+ E_{\alpha_{-i}}.
\end{align*}
\end{prop}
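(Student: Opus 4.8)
The plan is to verify directly that the assignment on generators respects all the defining relations of $\Ui_q(\mathfrak{sl}_{2r+2})$, and then to establish injectivity. Denote the proposed images by $\imath(\bk_{\alpha_i}) = K_{\alpha_i}K^{-1}_{\alpha_{-i}}$, $\imath(\bt) = E_{\alpha_0} + qF_{\alpha_0}K^{-1}_{\alpha_0}$, $\imath(\be_{\alpha_i}) = E_{\alpha_i} + K^{-1}_{\alpha_i}F_{\alpha_{-i}}$, $\imath(\bff_{\alpha_i}) = F_{\alpha_i}K^{-1}_{\alpha_{-i}} + E_{\alpha_{-i}}$ for $i \in \I^{\imath}_{2r+1}$. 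First I would handle the relations not involving $\bt$: since $\varepsilon_a$ and $\varepsilon_{-a}$ indices never collide for $i,j \in \I^{\imath}_{2r+1}$ (they sit on the two arms of the diagram, with $\alpha_0$ in the middle), the images of $\be_{\alpha_i}$, $\bff_{\alpha_i}$ for the two arms essentially decouple, and the $\bk$-conjugation, commutation, $\be$--$\bff$ and quantum Serre relations among them reduce to the corresponding relations in $\U_q(\mathfrak{sl}_{2r+2})$ together with the elementary identity $E_{\alpha_i}(K^{-1}_{\alpha_i}F_{\alpha_{-i}}) = q^2 (K^{-1}_{\alpha_i}F_{\alpha_{-i}})E_{\alpha_i}$; this is word-for-word the computation of \cite[\S2.1]{BW13}, which by Remark~\ref{rem:Uisame} applies verbatim since the generating relations coincide.

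The substantive new content is the interaction of $\bt$ with the rest, so next I would record the auxiliary facts about $\xx := E_{\alpha_0} + qF_{\alpha_0}K^{-1}_{\alpha_0}$ inside $\U_q(\mathfrak{sl}_{2r+2})$. One checks $K_{\alpha_i}K^{-1}_{\alpha_{-i}}$ commutes with $\xx$ for all $i$ (because $(\alpha_i - \alpha_{-i}, \alpha_0) = 0$), giving $\bk_{\alpha_i}\bt\bk^{-1}_{\alpha_i} = \bt$; and for $i>1$ the images of $\be_{\alpha_i}, \bff_{\alpha_i}$ involve only $E_{\alpha_{\pm i}}, F_{\alpha_{\pm i}}, K_{\alpha_{\pm i}}$ with $|\pm i| \ge 2$, all of which commute with $E_{\alpha_0}, F_{\alpha_0}, K_{\alpha_0}$, so $\be_{\alpha_i}\bt = \bt\be_{\alpha_i}$ and likewise for $\bff$. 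For the remaining four relations I would treat the $\be$-side; the $\bff$-side follows by applying $\omega$ (Proposition~\ref{prop:invol}) or by the same computation with the roles of raising/lowering operators swapped. The relation $\be_{\alpha_1}^2\bt + \bt\be_{\alpha_1}^2 = (q+q^{-1})\be_{\alpha_1}\bt\be_{\alpha_1}$ expands into a sum of quantum Serre relations for the three pairs $\{0,1\}$, $\{0,-1\}$, together with cross terms that cancel using $|1 - (-1)| = 2$ (so $E_{\alpha_1}, F_{\alpha_1}$ commute with $E_{\alpha_{-1}}, F_{\alpha_{-1}}$, as $\alpha_{-1}$ is two steps from $\alpha_1$) and the $q$-commutation of $E_{\alpha_0}$ past $F_{\alpha_0}K^{-1}_{\alpha_0}$. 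The genuinely inhomogeneous relation $\bt^2\be_{\alpha_1} + \be_{\alpha_1}\bt^2 = (q+q^{-1})\bt\be_{\alpha_1}\bt + \be_{\alpha_1}$ is the crucial check: expanding $\xx^2\imath(\be_{\alpha_1}) + \imath(\be_{\alpha_1})\xx^2 - (q+q^{-1})\xx\imath(\be_{\alpha_1})\xx$ produces quantum Serre contributions that vanish, plus terms containing the factor $E_{\alpha_0}F_{\alpha_0} - F_{\alpha_0}E_{\alpha_0} = (K_{\alpha_0} - K^{-1}_{\alpha_0})/(q-q^{-1})$; collecting these and using $K_{\alpha_0}$-weight bookkeeping yields exactly $\imath(\be_{\alpha_1})$, which is where the normalization constant $q$ in $\bt \mapsto E_{\alpha_0} + qF_{\alpha_0}K^{-1}_{\alpha_0}$ is forced.

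Finally, for injectivity I would argue as in \cite{BW13}: the map $m(\epsilon \otimes 1)\Delta$ (with $\Delta$, $\epsilon$ the coideal coproduct and counit, whose existence on $\Ui_q(\mathfrak{sl}_{2r+2})$ is the analogue of Proposition~\ref{int:prop:coproduct} and is checked the same way) recovers $\imath$, and a triangularity/PBW-type argument shows the images of ordered monomials in $\be_{\alpha_i}, \bff_{\alpha_i}, \bt, \bk^{\pm1}_{\alpha_i}$ are linearly independent over $\Qq$ — e.g. by looking at leading terms with respect to a suitable filtration on $\U_q(\mathfrak{sl}_{2r+2})$ in which $\imath(\be_{\alpha_i})$ has leading term $E_{\alpha_i}$, $\imath(\bff_{\alpha_i})$ has leading term $E_{\alpha_{-i}}$, and $\imath(\bt)$ has leading term $E_{\alpha_0}$, so that distinct monomials have distinct leading terms in $\U^+ \U^0$. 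The main obstacle is the verification of the two inhomogeneous $\bt$-relations; everything else is either a citation to \cite{BW13} via Remark~\ref{rem:Uisame} or a short commutation check, but those two relations require careful tracking of the $E_{\alpha_0}F_{\alpha_0}$-commutator terms and are the reason the particular parameter $q$ appears.
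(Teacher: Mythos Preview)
Your approach is correct and is essentially what the paper intends: the paper gives no proof here, deferring at the start of Section~\ref{sec:QSP} to \cite[Part~1]{BW13} with the comment that the arguments are ``analogous'' (the relations coincide by Remark~\ref{rem:Uisame}, only the embedding differs). Your sketch fills in exactly those omitted details --- verifying the relations on the images and then arguing injectivity via a leading-term/PBW filtration --- and this is the standard route taken in \cite{BW13}.

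One small caution: your shortcut ``the $\bff$-side follows by applying $\omega$'' does not quite work as stated, since $\omega(E_{\alpha_0} + qF_{\alpha_0}K^{-1}_{\alpha_0}) = F_{\alpha_0} + qE_{\alpha_0}K_{\alpha_0}$ is not $\imath(\bt)$, so $\omega$ does not intertwine the embedding with a symmetry swapping $\be_{\alpha_1}\leftrightarrow\bff_{\alpha_1}$. Your hedge ``or by the same computation with the roles of raising/lowering operators swapped'' is the right way out; just drop the appeal to $\omega$.
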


\begin{rem}
The embedding $\imath$ in Proposition~\ref{prop:embedding} is different from the embedding in \cite[Proposition~2.2]{BW13}, although the two subalgebras are (abstractly) isomorphic (see Remark~\ref{rem:Uisame}). This phenomenon for quantum symmetric pairs was first observed in \cite[Section~5]{Le}. 
\end{rem}

Note that $E_{\alpha_i} (K^{-1}_{\alpha_i}F_{\alpha_{-i}}) = q^{2} (K^{-1}_{\alpha_i}F_{\alpha_{-i}}) E_{\alpha_i}$ 
for all $ 0 \neq i \in \I$. Using the quantum binomial formula \cite[1.3.5]{Lu94},  we have, for all $i \in \I^{\imath}_{2r+1}$, $a \in \N$,
\begin{align}
\label{eq:beZ}
\imath(\be^{(a)}_{\alpha_i}) &= \sum^{a}_{j=0}  q^{j(a-j)}F^{(j)}_{\alpha_{-i}}K^{-j}_{\alpha_i} E^{(a-j)}_{\alpha_i},
 \\
\label{eq:bffZ}
\imath(\bff^{(a)}_{\alpha_i}) &= \sum^{a}_{j=0}  q^{j(a-j)}F^{(j)}_{\alpha_{i}}K^{-j}_{\alpha_{-i}} E^{(a-j)}_{\alpha_{-i}}.
\end{align}

\begin{prop} \label{prop:coproduct}
The coproduct $\Delta$ on $\U_q(\mathfrak{sl}_{2r+2})$ restricts via the embedding $\imath$ to a $\Qq$-algebra homomorphism 
\[
\Delta : \Ui_q(\mathfrak{sl}_{2r+2}) \longrightarrow \Ui_q(\mathfrak{sl}_{2r+2}) \otimes \U_q(\mathfrak{sl}_{2r+2})
\]
such that, for all $i \in \I^{\imath}_{2r+1}$, 
\begin{align*}
\Delta(\bk_{\alpha_i}) &= \bk_{\alpha_i} \otimes K_{\alpha_i} K^{-1}_{\alpha_{-i}},
 \\
\Delta({\be_{\alpha_i}}) &= 1 \otimes E_{\alpha_i} + \be_{\alpha_i} \otimes K^{-1}_{\alpha_i} 
+ \bk^{-1}_{\alpha_i} \otimes K^{-1}_{\alpha_i}F_{\alpha_{-i}},
 \\
\Delta (\bff_{\alpha_i}) &= \bk_{\alpha_i} \otimes F_{\alpha_i}K^{-1}_{\alpha_{-i}} + \bff_{\alpha_i} 
\otimes K^{-1}_{\alpha_{-i}} + 1 \otimes E_{{\alpha_{-i}}},
 \\
\Delta(\bt) &= \bt \otimes K^{-1}_{\alpha_0} + 1 \otimes q F_{\alpha_0}K^{-1}_{\alpha_0}+ 1 \otimes E_{\alpha_0}.
\end{align*}
Similarly, the counit $\epsilon$ of $\U_q(\mathfrak{sl}_{2r+2})$ induces a $\Qq$-algebra homomorphism 
\[
 \epsilon : \Ui_q(\mathfrak{sl}_{2r+2}) \rightarrow \Qq
 \]
 such that 
$\epsilon(\be_{\alpha_i}) =\epsilon(\bff_{\alpha_i})=0$, $\epsilon(\bt) = 0$, and $\epsilon(\bk_{\alpha_i}) =1$ for all $i \in \I^{\imath}_{2r+1}$.
\end{prop}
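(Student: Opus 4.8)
The plan is to verify the asserted formula for $\Delta$ on generators by direct computation inside $\U_q(\mathfrak{sl}_{2r+2})$, using the embedding $\imath$ of Proposition~\ref{prop:embedding}, and then to check that the resulting expressions lie in $\Ui_q(\mathfrak{sl}_{2r+2}) \otimes \U_q(\mathfrak{sl}_{2r+2})$. First I would compute $\Delta(\imath(\bk_{\alpha_i})) = \Delta(K_{\alpha_i}K^{-1}_{\alpha_{-i}})$: since $\Delta$ is an algebra homomorphism and $\Delta(K_{\alpha_j}) = K_{\alpha_j}\otimes K_{\alpha_j}$, this is $(K_{\alpha_i}K^{-1}_{\alpha_{-i}})\otimes(K_{\alpha_i}K^{-1}_{\alpha_{-i}})$, which is visibly $\imath(\bk_{\alpha_i})\otimes \imath(\bk_{\alpha_i})$, matching the first line via the identification $\bk_{\alpha_i} \otimes K_{\alpha_i}K^{-1}_{\alpha_{-i}}$ (the first tensor factor lies in the image of $\imath$). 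Next I would treat $\bt \mapsto E_{\alpha_0} + qF_{\alpha_0}K^{-1}_{\alpha_0}$: apply $\Delta$ using \eqref{eq:coprod} to get $1\otimes E_{\alpha_0} + E_{\alpha_0}\otimes K^{-1}_{\alpha_0} + q(F_{\alpha_0}\otimes 1 + K_{\alpha_0}\otimes F_{\alpha_0})(1\otimes K^{-1}_{\alpha_0})$; expanding and regrouping gives $(E_{\alpha_0} + qF_{\alpha_0}K^{-1}_{\alpha_0})\otimes K^{-1}_{\alpha_0} + 1\otimes E_{\alpha_0} + 1\otimes qF_{\alpha_0}K^{-1}_{\alpha_0}$, whose first tensor factor is exactly $\imath(\bt)$, so this equals $\bt\otimes K^{-1}_{\alpha_0} + 1\otimes qF_{\alpha_0}K^{-1}_{\alpha_0} + 1\otimes E_{\alpha_0}$ as claimed.

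For $\be_{\alpha_i} \mapsto E_{\alpha_i} + K^{-1}_{\alpha_i}F_{\alpha_{-i}}$ I would similarly expand $\Delta(E_{\alpha_i}) + \Delta(K^{-1}_{\alpha_i})\Delta(F_{\alpha_{-i}})$ using \eqref{eq:coprod}. The term $\Delta(E_{\alpha_i}) = 1\otimes E_{\alpha_i} + E_{\alpha_i}\otimes K^{-1}_{\alpha_i}$ is straightforward. For $\Delta(K^{-1}_{\alpha_i})\Delta(F_{\alpha_{-i}}) = (K^{-1}_{\alpha_i}\otimes K^{-1}_{\alpha_i})(F_{\alpha_{-i}}\otimes 1 + K_{\alpha_{-i}}\otimes F_{\alpha_{-i}})$ one gets $K^{-1}_{\alpha_i}F_{\alpha_{-i}}\otimes K^{-1}_{\alpha_i} + K^{-1}_{\alpha_i}K_{\alpha_{-i}}\otimes K^{-1}_{\alpha_i}F_{\alpha_{-i}}$. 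Collecting, the coefficient of the $\otimes K^{-1}_{\alpha_i}$ slot is $E_{\alpha_i} + K^{-1}_{\alpha_i}F_{\alpha_{-i}} = \imath(\be_{\alpha_i})$; the remaining terms are $1\otimes E_{\alpha_i}$ and $K^{-1}_{\alpha_i}K_{\alpha_{-i}}\otimes K^{-1}_{\alpha_i}F_{\alpha_{-i}} = \imath(\bk^{-1}_{\alpha_i})\otimes K^{-1}_{\alpha_i}F_{\alpha_{-i}}$, giving the stated formula. The computation for $\bff_{\alpha_i}$ is entirely analogous, swapping the roles of $E$ and $F$ and of $\alpha_i$ and $\alpha_{-i}$. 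Having done this, the claim that $\Delta$ restricts to a homomorphism $\Ui_q(\mathfrak{sl}_{2r+2}) \to \Ui_q(\mathfrak{sl}_{2r+2})\otimes \U_q(\mathfrak{sl}_{2r+2})$ follows because each $\Delta(\imath(x))$ for $x$ a generator lies in $\imath(\Ui_q(\mathfrak{sl}_{2r+2}))\otimes \U_q(\mathfrak{sl}_{2r+2})$ (the subalgebra generated by these being closed under multiplication), and $\Delta$ is an algebra map, so the image of all of $\Ui_q(\mathfrak{sl}_{2r+2})$ lands there; identifying the first factor via $\imath^{-1}$ yields the coideal coproduct. The counit statement is immediate: $\epsilon$ on $\U_q(\mathfrak{sl}_{2r+2})$ kills $E_{\alpha_j}, F_{\alpha_j}$ and sends $K_{\alpha_j}\mapsto 1$, so $\epsilon(\imath(\be_{\alpha_i})) = \epsilon(\imath(\bff_{\alpha_i})) = \epsilon(\imath(\bt)) = 0$ and $\epsilon(\imath(\bk_{\alpha_i})) = 1$, and this descends to a homomorphism $\Ui_q(\mathfrak{sl}_{2r+2}) \to \Qq$.

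The only genuinely non-routine point is confirming that the first tensor factors of all these expanded coproducts really do lie in the image of $\imath$ — i.e., that one can consistently pull back along $\imath$ to land in $\Ui_q(\mathfrak{sl}_{2r+2})\otimes\U_q(\mathfrak{sl}_{2r+2})$ rather than merely in $\U_q(\mathfrak{sl}_{2r+2})^{\otimes 2}$; this is where the specific choice of embedding in Proposition~\ref{prop:embedding} (as opposed to the one in \cite{BW13}) matters, and it is exactly what makes the coefficients come out as $\imath(\bk^{-1}_{\alpha_i})$, $\imath(\be_{\alpha_i})$, etc. with no correcting powers of $q$. Since the statement parallels \cite[\S2]{BW13} and \cite{Ko14}, and in light of the remark that Sections $2.1$–$2.3$ are analogous to \cite[Part~1]{BW13}, I would present the verification for $\bt$ in full (it being the new generator with the nonstandard embedding) and indicate that the computations for $\bk_{\alpha_i}, \be_{\alpha_i}, \bff_{\alpha_i}$ are the same as in \cite{BW13} up to the parameter change, omitting the routine bracket manipulations.
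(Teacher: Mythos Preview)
Your proposal is correct and is exactly the approach the paper intends: the paper omits the proof entirely, pointing to \cite[Part~1]{BW13} where the same direct verification on generators is carried out, and your computations for $\bk_{\alpha_i}$, $\be_{\alpha_i}$, $\bff_{\alpha_i}$, $\bt$ and the counit match line by line.
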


The map $\Delta : \Ui_q(\mathfrak{sl}_{2r+2}) \mapsto \Ui_q(\mathfrak{sl}_{2r+2}) \otimes \U_q(\mathfrak{sl}_{2r+2})$ is coassociative, i.e., we have
$(1 \otimes \Delta) \Delta = (\Delta \otimes 1)\Delta: \Ui_q(\mathfrak{sl}_{2r+2}) \longrightarrow \Ui_q(\mathfrak{sl}_{2r+2}) \otimes \U_q(\mathfrak{sl}_{2r+2}) \otimes \U_q(\mathfrak{sl}_{2r+2}).
$
This $\Delta$ 
will be called the { coproduct} of $\Ui_q(\mathfrak{sl}_{2r+2})$, and 
$ \epsilon : \Ui_q(\mathfrak{sl}_{2r+2}) \rightarrow \Qq$ will be called the {counit} of $\Ui_q(\mathfrak{sl}_{2r+2})$.  
The counit map $\epsilon$ makes $\Qq$ a (trivial) $\Ui_q(\mathfrak{sl}_{2r+2})$-module. 
Let $m : \U_q(\mathfrak{sl}_{2r+2}) \otimes \U_q(\mathfrak{sl}_{2r+2}) \rightarrow \U_q(\mathfrak{sl}_{2r+2})$ denote the multiplication map. 
We have 
$
m (\epsilon \otimes 1)\Delta = \imath : \Ui_q(\mathfrak{sl}_{2r+2}) \longrightarrow \U_q(\mathfrak{sl}_{2r+2})
$ by direct computation.


\subsection{The $\imath$-canonical bases} 
In the rest of the section we shall develop the theory of $\imath$-canonical bases for the quantum symmetric pairs  $(\U_q(\mathfrak{sl}_{2r+1}), \Ui_q(\mathfrak{sl}_{2r+1}))$,  \linebreak $(\U_q(\mathfrak{sl}_{2r+2}), \Ui_q(\mathfrak{sl}_{2r+2}))$ and its applications.  
The formulation of the theory is uniform for both cases. Hence we shall drop the subscript, and denote both quantum symmetric pairs simply by $(\U, \Ui)$, and denote the correspond index sets simply by $\I = \I_k$ and $\I^{\imath} = \I^{\imath}_k$, for $k= 2r+2$ or $2r+1$. In this section we shall assume all modules are finite dimensional.

Let $\widehat{\U}$ be the completion of the $\Qq$-vector space $\U$ 
with respect to the following descending sequence of subspaces 
$\U^+ \U^0 \big(\sum_{\hgt(\mu) \geq N}\U_{-\mu}^- \big)$,   for $N \ge 1.$
Then we have the obvious embedding of $\U$ into $\widehat{\U}$. 
We let $\widehat{\U}^-$ be the closure of $\U^-$ in $\widehat{\U}$, and so $\widehat{\U}^- \subseteq \widehat{\U}$. 
By continuity the $\Q(q)$-algebra structure 
on $\U$ extends to a $\Q(q)$-algebra structure on $ \widehat{\U}$.  The bar involution \,$\bar{\ }$\, on $\U$ extends 
by continuity to an anti-linear involution on $\widehat{\U}$, also denoted by \,$\bar{\, }$. The following proposition is the counterpart of \cite[\S2.3, \S2.4 and \S4.4]{BW13}.

\begin{prop}
There is a unique family of elements $\Upsilon_\mu \in {}_\mA\U_{-\mu}^-$ for $\mu \in {\N}{\Pi}$ such that $\Upsilon_0 = 1$, and
$\Upsilon = \sum_{\mu}\Upsilon_\mu \in \widehat{\U}^-$
intertwines the bar involution $\psi_{\imath}$ on $\bun$ and the bar involution $\psi$ on $\U$ via the embedding $\imath$;  
that is, $\Upsilon$ satisfies the following identity (in $\widehat{\U}$): 
\begin{equation}\label{eq:star}
 \imath(\psi_{\imath}{u}) \Upsilon = \Upsilon\  \psi({\imath(u)}), \quad \text{ for all } u \in \bun.
\end{equation}
Moreover, $\Upsilon_\mu = 0$ unless $\mu^{\inv} = \mu$. We also have $\Upsilon \cdot \ov{\Upsilon} =1.$
\end{prop}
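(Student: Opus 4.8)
The plan is to construct $\Upsilon$ by the recursive procedure underlying Lusztig's quasi-$R$-matrix (\cite[Ch.~4]{Lu94}), in the form adapted to quantum symmetric pairs in \cite[\S2.3--2.4, \S4.4]{BW13}; the only new inputs are the explicit formulas of Propositions~\ref{int:prop:embedding} and \ref{prop:embedding} and the modified Serre relations \eqref{eq:Serre:1}--\eqref{eq:Serre:2}, which replace their analogues in loc.\ cit. First I would reduce \eqref{eq:star} to the generators of $\bun$: the set of $u \in \bun$ satisfying \eqref{eq:star} is a subalgebra of $\bun$ containing $1$, since $\psi_\imath$, $\imath$ and $\psi$ are all algebra homomorphisms, so from $\imath(\psi_\imath u)\Upsilon = \Upsilon\psi(\imath u)$ and $\imath(\psi_\imath u')\Upsilon = \Upsilon\psi(\imath u')$ the same identity for $uu'$ follows. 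Thus it suffices to build $\Upsilon = \sum_\mu \Upsilon_\mu$ with $\Upsilon_\mu \in {}_\mA\U^-_{-\mu}$, $\Upsilon_0 = 1$, making \eqref{eq:star} hold for $u = \bk^{\pm1}_{\alpha_i}$, $\be_{\alpha_i}$, $\bff_{\alpha_i}$ ($i \in \I^\imath$), and, when $k = 2r+1$, for $u = \bt$. The $\bk$-cases are immediate: since $\psi_\imath(\bk_{\alpha_i}) = \bk^{-1}_{\alpha_i}$ and $\imath(\bk_{\alpha_i}) = K_{\alpha_i}K^{-1}_{\alpha_{-i}}$, \eqref{eq:star} says that each $\Upsilon_\mu$ commutes with $K^{-1}_{\alpha_i}K_{\alpha_{-i}}$, i.e. $(\alpha_i - \alpha_{-i},\mu) = 0$ for all $i \in \I^\imath$, which will be subsumed by the support statement $\mu^{\inv} = \mu$.

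Next I would turn the remaining generator cases into recursions. For $u = \be_{\alpha_i}$, by definition of $\psi_\imath$, by Propositions~\ref{int:prop:embedding}/\ref{prop:embedding}, and by Proposition~\ref{prop:invol}, one has $\psi_\imath(\be_{\alpha_i}) = \be_{\alpha_i}$, $\imath(\be_{\alpha_i}) = E_{\alpha_i} + F_{\alpha_{-i}}K^{-1}_{\alpha_i}$, and $\psi(\imath\be_{\alpha_i}) = E_{\alpha_i} + F_{\alpha_{-i}}K_{\alpha_i}$, so \eqref{eq:star} reads $(E_{\alpha_i} + F_{\alpha_{-i}}K^{-1}_{\alpha_i})\Upsilon = \Upsilon(E_{\alpha_i} + F_{\alpha_{-i}}K_{\alpha_i})$ in $\widehat{\U}$. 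I would put both sides in the triangular normal form $\widehat{\U}^-\,\U^0$ using Lusztig's identity $E_{\alpha_i}x - x E_{\alpha_i} = \big(r_i(x)K_{\alpha_i} - K^{-1}_{\alpha_i}\,{}_i r(x)\big)/(q - q^{-1})$ for $x \in \U^-$, where $r_i, {}_i r \colon \U^-_{-\nu} \to \U^-_{-\nu + \alpha_i}$ are the twisted derivations, and then compare the coefficients of $K^{\pm1}_{\alpha_i}$ and the weight components. This converts the single identity into recursions of the form $r_i(\Upsilon_\mu) = (q - q^{-1})\,\Upsilon_{\mu - \alpha_i - \alpha_{-i}}F_{\alpha_{-i}}$ together with a companion one for ${}_i r(\Upsilon_\mu)$. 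The $\bff_{\alpha_i}$-relations ($i \in \I^\imath$) supply in the same manner the formulas for $r_{-i}$ and ${}_{-i} r$, and the $\bt$-relation (for $k = 2r+1$) those for $r_0$ and ${}_0 r$; altogether $r_i(\Upsilon_\mu)$ and ${}_i r(\Upsilon_\mu)$ become prescribed for every $i \in \I$ in terms of $\Upsilon_{\mu'}$ with $\hgt(\mu') < \hgt(\mu)$.

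I would then construct the $\Upsilon_\mu$ by induction on $\hgt(\mu)$, starting from $\Upsilon_0 = 1$. Given the $\Upsilon_{\mu'}$ with $\hgt(\mu') < \hgt(\mu)$ (already known to lie in ${}_\mA\U^-$ and to vanish unless $\mu'^{\inv} = \mu'$), the prescribed tuple $(r_i(\Upsilon_\mu))_{i \in \I}$ is determined; since $x \mapsto (r_i(x))_{i \in \I}$ is injective on $\U^-_{-\mu}$ for $\mu \neq 0$ (see \cite{Lu94}), there is at most one $\Upsilon_\mu$ with these values, and I only need to check that the prescribed tuple is \emph{consistent}, i.e. that it genuinely arises from an element of $\U^-_{-\mu}$, that this element lies in ${}_\mA\U^-_{-\mu}$, and that it vanishes unless $\mu^{\inv} = \mu$. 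Integrality is read off from the divided-power versions of the recursion coming from \eqref{int:eq:beZ}--\eqref{eq:bffZ}; the support statement follows because $\Upsilon_0$ and each $\alpha_i + \alpha_{-i}$ are $\inv$-fixed, so the recursions never produce non-$\inv$-fixed components. The consistency check is the \emph{main obstacle}: it is exactly the assertion that the recursions attached to the various generators respect all defining relations of $\bun$, most notably the modified Serre relations \eqref{eq:Serre:1}, \eqref{eq:Serre:2} (and, for $k = 2r+1$, the $\bt$-relations), and must also be compatible with the companion ${}_i r$-recursions. I would carry it out exactly as in \cite[\S2.3--2.4]{BW13}: the relevant computations have the same structure there, the only inputs that change being the embedding formulas \eqref{int:eq:beZ}--\eqref{eq:bffZ} and the Serre relations \eqref{eq:Serre:1}--\eqref{eq:Serre:2}. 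Uniqueness of $\Upsilon$ is then automatic, since $\Upsilon_\mu$ is forced at every stage.

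Finally, for $\Upsilon \cdot \ov{\Upsilon} = 1$, I would apply the continuous anti-linear algebra automorphism $\psi$ of $\widehat{\U}$ to \eqref{eq:star}, then substitute $u \mapsto \psi_\imath(u)$, and rearrange; using $\psi^2 = \id$ on $\U$ and the bijectivity of $\psi_\imath$, this yields $\imath(\psi_\imath u)\,\ov{\Upsilon}^{-1} = \ov{\Upsilon}^{-1}\,\psi(\imath u)$ for all $u \in \bun$, i.e. $\ov{\Upsilon}^{-1}$ also satisfies \eqref{eq:star}. Since $\ov{\Upsilon} = 1 + \sum_{\mu \neq 0} \ov{\Upsilon_\mu}$ with $\ov{\Upsilon_\mu} \in {}_\mA\U^-_{-\mu}$ vanishing unless $\mu^{\inv} = \mu$, its inverse in $\widehat{\U}^-$ has the same normalization, integrality and support; the uniqueness just established then forces $\ov{\Upsilon}^{-1} = \Upsilon$, i.e. $\Upsilon \cdot \ov{\Upsilon} = 1$.
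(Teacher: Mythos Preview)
Your proposal is correct and follows exactly the approach the paper defers to, namely the recursive construction of $\Upsilon$ via Lusztig-type derivations $r_i,\,{}_ir$ as in \cite[\S2.3--2.4, \S4.4]{BW13}; the paper does not give an independent proof but simply cites that reference. One small slip: you have the two cases swapped---the generator $\bt$ (and the associated $r_0,\,{}_0r$ recursions) belongs to the pair $(\U_q(\mathfrak{sl}_{2r+2}),\Ui_q(\mathfrak{sl}_{2r+2}))$, while the modified Serre relations \eqref{eq:Serre:1}--\eqref{eq:Serre:2} (and no $\bt$) belong to $(\U_q(\mathfrak{sl}_{2r+1}),\Ui_q(\mathfrak{sl}_{2r+1}))$; this does not affect the argument.
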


Consider a $\Qq$-valued function $\zeta$ on $\Lambda$ such that 
\begin{align}
\zeta (\mu+\alpha_0)&=-q \zeta (\mu) \quad (\text{only for the pair $(\U_q(\mathfrak{sl}_{2r+1}), \Ui_q(\mathfrak{sl}_{2r+1})$}), 
 \notag \\ 
\zeta (\mu+\alpha_i) &= -q^{(\alpha_i, \mu+\alpha_i)-(\alpha_{-i},\mu)} \zeta (\mu), 
  \label{eq:zeta0} \\
\zeta (\mu+\alpha_{-i}) &= -q^{(\alpha_{-i}, \mu+\alpha_{-i}) - (\alpha_{i}, \mu)-1} \zeta (\mu),
\quad \forall \mu \in \Lambda, \; i \in \Ihf. 
\notag
\end{align}
Such $\zeta$ clearly exists (but not unique). For any weight $\U$-module $M$, 
define a $\Qq$-linear map on $ M$ as follows:
\begin{align}
  \label{eq:zeta}
\begin{split}
\widetilde{\zeta}&: M \longrightarrow M, 
 \\
\widetilde{\zeta}  (m &)  =  \zeta (\mu)m, \quad \forall   m \in M_{\mu}.
\end{split}
\end{align}

Recall that $w_0$ is the longest element of $W$ and   $T_{w_0}$  is the associated braid group element
from Section~ \ref{subsec:CB}. The following proposition is the analog of \cite[Theorem~2.18]{BW13}.

\begin{prop}\label{prop:mcT}
For any finite-dimensional $\U$-module $M$, 
the composition map
\[
\mc{T} := \Upsilon\circ \widetilde{\zeta} \circ T_{w_0}: M \longrightarrow M
\] 
is a $\bun$-module isomorphism.
\end{prop}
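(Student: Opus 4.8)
The statement $\mc{T} = \Upsilon \circ \widetilde{\zeta} \circ T_{w_0}$ is a $\bun$-module isomorphism is proved exactly as the analogous \cite[Theorem~2.18]{BW13}, by verifying that each of the three factors intertwines the $\bun$-action in a compatible way so that the composite is genuinely $\bun$-linear; bijectivity is then immediate since all three factors are invertible ($\Upsilon$ because $\Upsilon \cdot \ov\Upsilon = 1$, $\widetilde\zeta$ because $\zeta$ is nowhere vanishing, and $T_{w_0}$ by Lusztig's theory). So the real content is the intertwining identity, and I would organize it as follows. First I would record how $T_{w_0}$ transports the $\bun$-action: using Lemma~\ref{lem:Tw0} together with the explicit embedding $\imath$ from Proposition~\ref{int:prop:embedding} (resp.\ Proposition~\ref{prop:embedding}), one computes $T_{w_0}\bigl(\imath(\be_{\alpha_i})\bigr)$, $T_{w_0}\bigl(\imath(\bff_{\alpha_i})\bigr)$, $T_{w_0}\bigl(\imath(\bk_{\alpha_i})\bigr)$ (and $T_{w_0}(\imath(\bt))$ in the even case) as explicit elements of $\U$. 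Since $T_{w_0}$ sends $E_{\alpha_i} \mapsto -F_{\alpha_{-i}}K_{\alpha_{-i}}$, etc., and $\imath(\be_{\alpha_i})$ is a sum of an $E$-term and an $FK^{-1}$-term, the image is again of the same ``coideal'' shape but with indices negated and dressed by a scalar; the point of $\zeta$ is precisely to absorb those scalars.

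**Key steps.** \textbf{Step 1 (the $\zeta$-twist).} Conjugating by $\widetilde\zeta$ rescales each generator's action by the ratio $\zeta(\mu + \alpha)/\zeta(\mu)$ dictated by \eqref{eq:zeta0}; I would check that after this rescaling the operator $\widetilde\zeta \circ T_{w_0}$ intertwines the $\bun$-action defined via $\imath$ with the $\bun$-action defined via $\imath \circ \sigma$ for the relevant Dynkin-diagram automorphism $\sigma$ — but here, because $\inv$ fixes $\I^\imath$ pointwise in the sense that $\alpha_i - \alpha_{-i}$ is $\inv$-stable, the upshot is that $\widetilde\zeta \circ T_{w_0}$ conjugates $\imath(\bun)$ into the \emph{bar-twisted} image $\imath(\psi_\imath(\bun))$, i.e.\ one gets, for every $u \in \bun$, an identity of the form $\bigl(\widetilde\zeta \circ T_{w_0}\bigr) \circ \imath(u) = \imath\bigl(\psi_\imath(u')\bigr) \circ \bigl(\widetilde\zeta \circ T_{w_0}\bigr)$ up to a controlled discrepancy; the role of the defining relations \eqref{eq:zeta0} is to make the scalar discrepancies cancel on the generators, and then one extends to all of $\bun$ by the algebra structure. \textbf{Step 2 (the $\Upsilon$-correction).} Now apply $\Upsilon$ on the left. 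By the defining intertwining property \eqref{eq:star}, $\imath(\psi_\imath u)\,\Upsilon = \Upsilon\,\psi(\imath(u))$; combining this with Step 1 converts the bar-twisted $\imath$-action back into the genuine $\imath$-action, so that $\mc{T} \circ \imath(u) = \imath(u) \circ \mc{T}$ for all $u \in \bun$, which is exactly $\bun$-linearity. \textbf{Step 3 (bijectivity).} As noted, each factor is an isomorphism of the underlying $\Qq$-vector space $M$ (for $\Upsilon$ one uses the lower-triangular-unipotent shape and $\Upsilon\ov\Upsilon = 1$), hence so is $\mc{T}$.

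**Main obstacle.** The genuinely delicate point is Step 1: making the bookkeeping of the scalar factors $\zeta(\mu+\alpha)/\zeta(\mu)$ match the output of $T_{w_0} \circ \imath$ on the generators, and in particular handling the ``inhomogeneous'' generators — the half-integral generator $\be_{\hf}, \bff_{\hf}$ obeying the modified Serre relations \eqref{eq:Serre:1}--\eqref{eq:Serre:2} in the odd case, and the special generator $\bt$ with $\Delta(\bt) = \bt \otimes K^{-1}_{\alpha_0} + 1 \otimes qF_{\alpha_0}K^{-1}_{\alpha_0} + 1 \otimes E_{\alpha_0}$ in the even case. These are exactly the places where the parameters of the present quantum symmetric pair differ from \cite{BW13}, so while the structure of the argument is identical, the explicit scalars in \eqref{eq:zeta0} (notably the extra $-1$ in the exponent and the rule $\zeta(\mu+\alpha_0) = -q\,\zeta(\mu)$) must be re-derived to fit the new embedding $\imath$; verifying that such a $\zeta$ exists (consistency of the three/four recursions, which is a cocycle condition on $\N\Pi$) and that it does the job on $\bt$ and on $\be_{\hf}$ is the crux. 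Everything else is formal, and since \cite[Theorem~2.18]{BW13} carries over essentially verbatim, I would present this proof in a few lines, referring to \emph{loc.\ cit.}\ for the details of the generic generators and spelling out only the modifications forced by the new parameters.
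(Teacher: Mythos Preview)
Your proposal is correct and follows exactly the route the paper intends: the paper gives no proof here but simply declares the result ``the analog of \cite[Theorem~2.18]{BW13}'', and your three-step outline (compute $T_{w_0}\circ\imath$ on generators via Lemma~\ref{lem:Tw0}, absorb the resulting scalars with $\widetilde\zeta$ using the recursions \eqref{eq:zeta0}, then invoke the intertwining property \eqref{eq:star} of $\Upsilon$) is precisely that argument, with the correct identification of where the new parameters force recomputation. One small cleanup: your Step~1 statement that $\widetilde\zeta\circ T_{w_0}$ conjugates $\imath(u)$ to $\psi(\imath(u))$ is literally true for $\be_{\alpha_i},\bff_{\alpha_i},\bt$ but not for $\bk_{\alpha_i}$ (there it conjugates to $\imath(\bk_{\alpha_i})$ itself, not its inverse); this causes no trouble since $\Upsilon$ commutes with $\imath(\bk_{\alpha_i})$ by the $\theta$-invariance constraint $\Upsilon_\mu=0$ unless $\mu^\theta=\mu$, so the final conclusion $\mc{T}\circ\imath(u)=\imath(u)\circ\mc{T}$ still holds uniformly.
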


Recall the bar involutions on $\U$ and its modules are denoted by $\Abar$, and
the bar involution on $\bun$ is denoted by $\Bbar$. 
It is also understood that $\Abar(u) =\Abar(\imath(u))$ for $u\in \bun$.
 
We call
a $\bun$-module $M$ equipped with an anti-linear involution $\Bbar$ {\em involutive} (or {\em $\imath$-involutive}) if
$
\Bbar(u m) = \Bbar(u) \Bbar(m)$, $\forall u \in \bun, m \in M$. For any involutive $\U$-module $M$ with anti-linear involution $\Abar$, the anti-linear involution 
\[
\Bbar := \Upsilon \circ \Abar : M \longrightarrow M 
\]
 makes $M$ an $\imath$-involution $\Ui$-module (cf. \cite[Proposition~3.10]{BW13}). In particular, since we know both $L(\la)$ and ${^{\omega}L}(\lambda)$ are involutive $\U$-modules, they are $\imath$-involutive $\Ui$-modules as well. The following theorem is the counterpart of \cite[Proposition~4.20]{BW13}.


%

%
%
%
%
%
%
%

\begin{thm}\label{thm:BCB}
Let $\la \in \La^+$. The $\bun$-module  ${^{\omega}L}(\lambda)$ admits a unique basis 
\[
\B^\imath(\lambda) := \{T^{\lambda}_{b} \mid b \in \B(\lambda)\}
\]
which is $\Bbar$-invariant and of the form
\[
T^{\lambda}_{b} = b^+ \xi_{-\lambda} +\sum_{b' \prec b}
t^{\lambda}_{b;b'} b'^+  \xi_{-\lambda},
\quad \text{ for }\;  t^{\lambda}_{b;b'} \in q\Z[q].
\]
\end{thm}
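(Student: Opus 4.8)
The plan is to follow the standard bar-involution/partial-order argument used to produce canonical bases on tensor products, adapted here to the coideal setting exactly as in \cite[Part~1]{BW13}. First I would record that by the discussion preceding the theorem, ${}^\omega L(\lambda)$ is an $\imath$-involutive $\Ui$-module with anti-linear involution $\Bbar = \Upsilon \circ \Abar$, where $\Abar$ is the bar involution on ${}^\omega L(\lambda)$ coming from the $\U$-module structure (for which $\Abar(b^+\xi_{-\lambda}) = b^+\xi_{-\lambda}$ for $b \in \B(\lambda)$, since $\{b^+\xi_{-\lambda}\}$ is the canonical basis of ${}^\omega_\mA L(\lambda)$ and is bar-invariant). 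The monomials $\{b^+\xi_{-\lambda} \mid b \in \B(\lambda)\}$ form an $\mA$-basis of ${}^\omega_\mA L(\lambda)$, which I will use as the ``standard basis''. The partial order $\prec$ on $\B(\lambda)$ is the one induced by weights: $b' \prec b$ only if $\mathrm{wt}(b') - \mathrm{wt}(b) \in \N\Pi \setminus \{0\}$ lies in the span of $\{\alpha_i - \alpha_{-i}\}$, refined appropriately so that each down-set is finite; this finiteness is what makes the construction go through in the finite-dimensional setting assumed here.

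The key computation is the behaviour of $\Bbar$ on the standard basis. Since $\Upsilon = \sum_\mu \Upsilon_\mu$ with $\Upsilon_0 = 1$ and $\Upsilon_\mu \in {}_\mA\U^-_{-\mu}$, and $\Upsilon_\mu = 0$ unless $\mu^\inv = \mu$, applying $\Bbar = \Upsilon\circ\Abar$ to $b^+\xi_{-\lambda}$ gives $b^+\xi_{-\lambda}$ plus a sum of terms $\Upsilon_\mu \cdot (b^+\xi_{-\lambda})$ with $\mu \neq 0$, each of which lies in a strictly lower weight space and hence, after re-expanding in the standard basis, is an $\mA$-combination of $b'^+\xi_{-\lambda}$ with $b' \prec b$. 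Thus the matrix of $\Bbar$ with respect to the standard basis is unitriangular with respect to $\prec$, with diagonal entries $1$. One also needs $\Bbar^2 = \mathrm{id}$, which follows from $\Upsilon\cdot\ov\Upsilon = 1$ and $\Abar^2 = \mathrm{id}$. Given a unitriangular involution over $\mA$ with the down-sets finite, the standard Kazhdan–Lusztig/Lusztig lemma (cf. \cite[Lemma~24.2.1]{Lu94}) produces a unique bar-invariant basis $\{T^\lambda_b\}$ of the form $T^\lambda_b = b^+\xi_{-\lambda} + \sum_{b'\prec b} t^\lambda_{b;b'}\, b'^+\xi_{-\lambda}$ with $t^\lambda_{b;b'} \in q\Z[q]$; this is exactly $\B^\imath(\lambda)$.

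The only real work, and the main obstacle, is verifying the unitriangularity precisely: one must check that $\Upsilon_\mu$ applied to $b^+\xi_{-\lambda}$ indeed expands into standard basis vectors indexed strictly below $b$, and in particular that the relevant weight semigroup is compatible with a partial order having finite down-sets. This rests on the integrality statement $\Upsilon_\mu \in {}_\mA\U^-_{-\mu}$ and the structure of ${}_\mA\U^-$ acting on the canonical basis of ${}^\omega_\mA L(\lambda)$; concretely, $\Upsilon_\mu b^+ \in {}_\mA\U^-$ expands in the canonical basis $\B$ with coefficients in $\mA$, and the weight grading forces the index shift. Since $\Upsilon$ here differs from the one in \cite{BW13} only in its explicit formula and not in its defining properties (bar-intertwining, $\inv$-invariance of its support, $\Upsilon\ov\Upsilon = 1$), and the $\imath$-canonical basis construction of \cite{BW13} uses only those properties, the argument transfers verbatim; I would therefore state it briefly and refer to \cite[Proposition~4.20]{BW13} for the details of the triangularity bookkeeping.
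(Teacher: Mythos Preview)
Your approach is correct and matches the paper's, which omits the proof entirely and simply records the theorem as ``the counterpart of \cite[Proposition~4.20]{BW13}''; the argument you sketch---unitriangularity of $\Bbar=\Upsilon\circ\Abar$ on the canonical basis $\{b^+\xi_{-\lambda}\}$ followed by Lusztig's lemma \cite[24.2.1]{Lu94}---is exactly the content of that reference, and your closing remark that only the defining properties of $\Upsilon$ (not its explicit formula) are used is precisely why the proof transports from \cite{BW13} to the present parameters. One small correction: the support condition on $\Upsilon$ is $\mu^\inv=\mu$, so the weight shifts lie in the span of the $\theta$-\emph{invariant} combinations $\alpha_i+\alpha_{-i}$ (and $\alpha_0$ in the odd-rank case), not $\alpha_i-\alpha_{-i}$; relatedly, $b'\prec b$ corresponds to $|b|-|b'|\in\N\Pi$ being a nonzero $\theta$-fixed element, so your sign is reversed.
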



\begin{definition}  \label{def:CB}
$\B^\imath(\lambda)$ is called the $\imath$-canonical basis of
the $\bun$-module ${^{\omega}L(\lambda)}$.
\end{definition}

Recall in \cite[Chapter 27]{Lu94} Lusztig has developed a theory of based $\U$-modules $(M,B)$ (for a general quantum group $\U$ of finite type).
The basis $B$ generates a $\Z[q]$-submodule $\mc{M}$ and an $\mA$-submodule ${}_\mA M$ of $M$.
 
\begin{thm} 
Let $(M,B)$ be a finite-dimensional based $\U$-module.  
 
\begin{enumerate}
\item
The $\bun$-module $M$ admits a unique basis (called $\imath$-canonical basis)
$
B^\imath  := \{T_{b} \mid b \in B \}
$
which is $\Bbar$-invariant and of the form
\begin{equation} \label{iCB}
T_{b} = b +\sum_{b' \in B, b' \prec b}
t_{b;b'} b',
\quad \text{ for }\;  t_{b;b'} \in q\Z[q].
\end{equation}

\item
$B^\imath$ forms an $\mA$-basis for the $\mA$-lattice ${}_\mA M$, and
$B^\imath$ forms a $\Z[q]$-basis for the $\Z[q]$-lattice $\mc{M}$.
\end{enumerate}
\end{thm}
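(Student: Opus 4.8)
The plan is to mimic the strategy Lusztig uses for canonical bases of based modules, combined with the $\imath$-involution $\Bbar = \Upsilon \circ \Abar$ on $M$ and the uniqueness-plus-existence argument that proved \thmref{thm:BCB}. Throughout I fix a based $\U$-module $(M,B)$ and equip $M$ with the bar involution $\Abar$ coming from its based structure (recall that any based $\U$-module is involutive in the sense of Section~\ref{subsec:CB}, since $B$ is bar-invariant), so that $\Bbar := \Upsilon \circ \Abar$ is a well-defined anti-linear involution making $M$ an $\imath$-involutive $\bun$-module by the discussion preceding \thmref{thm:BCB}. The partial order $\prec$ on $B$ is the one already in place from Lusztig's theory of based modules.

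First I would compute the effect of $\Bbar$ on the basis $B$. Since $\Abar(b) = b$ for all $b \in B$ by definition of a based module, we get $\Bbar(b) = \Upsilon(b) = \sum_\mu \Upsilon_\mu b$, where $\Upsilon_\mu \in {}_\mA\U^-_{-\mu}$ as in the proposition on $\Upsilon$. The key structural input is that $\U^-$ acting on a based module is ``lower triangular'' with respect to $\prec$ in a controlled way: writing $u \cdot b$ in the basis $B$ for $u \in {}_\mA\U^-$, the coefficients lie in $\mA$ and the terms $b'$ that occur satisfy $b' \preceq b$ in an appropriate sense, with the diagonal coefficient governed by the leading behaviour of $u$. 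Combined with $\Upsilon_0 = 1$ and the fact (from the $\Upsilon$-proposition) that $\Upsilon \cdot \ov\Upsilon = 1$, this shows $\Bbar(b) = b + \sum_{b' \prec b} r_{b;b'} b'$ with $r_{b;b'} \in \mA$, and that $\Bbar^2 = \id$ forces the usual cocycle-type compatibility $\ov{r_{b;b'}} + \sum r_{b;b''}\ov{r_{b'';b'}} = 0$ needed to run the Kazhdan--Lusztig-type lemma. This is exactly the hypothesis of the standard existence-and-uniqueness lemma (\cite[Lemma~24.2.1]{Lu94} or its reformulation in \cite[\S1]{BW13}).

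Second, I would invoke that standard lemma: given a free $\mA$-module (here ${}_\mA M$, with its $\mA$-basis $B$, partially ordered by $\prec$ which is interval-finite since $M$ is finite-dimensional) equipped with an anti-linear involution $\Bbar$ that is unitriangular with respect to $(B,\prec)$, there is a unique $\Bbar$-invariant basis $\{T_b\}$ of the form \eqref{iCB} with $t_{b;b'} \in q\Z[q]$. This yields both the existence and uniqueness in part (1). For part (2), the unitriangular change of basis from $B$ to $B^\imath$ has entries in $q\Z[q] \subseteq \mA \cap \Z[q]$ and is invertible over $\mathbb{Z}[q]$ as well (the inverse transition matrix is again unitriangular with entries in $\Z[q]$, by inverting a unitriangular matrix over $\Z[q]$), so $B^\imath$ is simultaneously an $\mA$-basis of ${}_\mA M = \bigoplus_{b} \mA\, b$ and a $\Z[q]$-basis of $\mc{M} = \bigoplus_b \Z[q]\, b$.

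The main obstacle is the triangularity claim in the second step, namely that $\Bbar(b) = \Upsilon \cdot b \in b + \sum_{b' \prec b}\mA\, b'$ with respect to Lusztig's order $\prec$ on the based module $(M,B)$. This requires knowing how ${}_\mA\U^-_{-\mu}$ (for $\mu \ne 0$, necessarily with $\mu^\inv = \mu$) moves basis elements strictly down in the order; it follows by combining the weight-grading (applying $\Upsilon_\mu$ drops the weight by $\mu$) with the defining property of based modules that the $\U^-$-action on $B$ is ``almost upper/lower triangular'' in Lusztig's sense (\cite[27.1.4]{Lu94}) — this is precisely what makes the construction of $\imath$-canonical bases work and is the point where one must be slightly careful that the order used in \eqref{iCB} is Lusztig's $\preceq$ for based modules, for which the triangularity is built in. Once that is in hand, everything else is the formal Kazhdan--Lusztig argument already used for \thmref{thm:BCB}, and I would simply refer to \cite[Proposition~4.20]{BW13} for the details, noting that the only change from \cite{BW13} is in the explicit formula for $\Upsilon$ (via the new embedding $\imath$), which is irrelevant to the triangularity and to the uniqueness argument.
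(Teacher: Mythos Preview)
Your proposal is correct and follows essentially the same approach as the paper, which omits the proof entirely and defers to \cite[Part~1]{BW13} (see the paper's remark that Sections~2.1--2.3 are analogous to \cite{BW13} and hence proofs are omitted). The argument you outline---triangularity of $\Bbar = \Upsilon\circ\Abar$ with respect to Lusztig's partial order on $B$, followed by the standard Kazhdan--Lusztig existence/uniqueness lemma \cite[24.2.1]{Lu94}, and then the unitriangular base-change for part~(2)---is exactly the content of the corresponding result in \cite{BW13} that the paper is citing, and your observation that only the explicit formula for $\Upsilon$ changes (which is irrelevant to the structural argument) matches the paper's own stance.
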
   

Recall that a tensor product of finite-dimensional simple $\U$-modules is a based $\U$-module
by \cite[Theorem~27.3.2]{Lu94}.

\begin{cor} \label{cor:iCBontensor}
Let $\la_1, \ldots, \la_r \in \La^+$. The tensor product of finite-dimensional simple $\U$-modules
${^{\omega} L (\la_1)}  
\otimes \ldots \otimes {^{\omega} L (\la_r)}$
admits a unique $\Bbar$-invariant basis of the form \eqref{iCB} (called $\imath$-canonical basis).
\end{cor}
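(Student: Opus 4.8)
The plan is to deduce Corollary~\ref{cor:iCBontensor} as a direct instance of the preceding theorem on $\imath$-canonical bases of based $\U$-modules. First I would recall that by \cite[Theorem~27.3.2]{Lu94}, for $\la_1,\dots,\la_r \in \La^+$ the tensor product ${^{\omega}L(\la_1)} \otimes \cdots \otimes {^{\omega}L(\la_r)}$ carries a canonical basis $B$ making $(M,B)$ a based $\U$-module in the sense of \cite[Chapter~27]{Lu94}; here one uses that each ${^{\omega}L(\la_i)}$ is a based $\U$-module via $\{b^+\xi_{-\la_i} \mid b \in \B(\la_i)\}$, and that the based module structure is stable under tensor products. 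Each ${^{\omega}L(\la_i)}$ is involutive with bar involution $\Abar$, hence so is their tensor product by Lusztig's result (\cite[\S27.31]{Lu94}) recalled in Section~\ref{sec:prelim}; consequently, by the discussion preceding Theorem~\ref{thm:BCB}, $M$ becomes an $\imath$-involutive $\bun$-module via $\Bbar := \Upsilon \circ \Abar$.

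Next I would simply invoke the previous theorem applied to the based $\U$-module $(M, B) = \big({^{\omega}L(\la_1)} \otimes \cdots \otimes {^{\omega}L(\la_r)},\, B\big)$: it yields a unique $\Bbar$-invariant basis $B^{\imath} = \{T_b \mid b \in B\}$ of the form \eqref{iCB}, namely $T_b = b + \sum_{b' \prec b} t_{b;b'}\, b'$ with $t_{b;b'} \in q\Z[q]$. This is precisely the asserted $\imath$-canonical basis, and uniqueness is inherited verbatim from that theorem. One should check that the partial order $\prec$ used in \eqref{iCB} is the one carried by the based module structure on the tensor product (induced from the orders on the factors together with the weight filtration), which is exactly the order with respect to which the theorem is stated; no extra argument is needed beyond unwinding Lusztig's definitions.

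The only genuine content is therefore verifying the hypotheses of the cited theorem, and I expect the main (mild) obstacle to be bookkeeping rather than mathematics: one must confirm that the tensor-product bar involution $\Abar$ on $M$ coincides with the one used when forming $\Bbar = \Upsilon \circ \Abar$, and that the $\mA$-lattice ${}_\mA M$ and $\Z[q]$-lattice $\mc{M}$ generated by $B$ in the tensor product agree with the ones appearing in the theorem's statement — both facts are standard consequences of \cite[Chapter~27]{Lu94} and the compatibility of $\Upsilon$ with the bar involutions established in the Proposition on $\Upsilon$ above. Since Section~$2.1$–Section~$2.3$ are explicitly stated to be analogous to \cite[Part~1]{BW13} with proofs omitted, it suffices to note that this corollary is the exact counterpart of \cite[Corollary~4.21]{BW13} and that the argument there carries over without change, the parameter differences in $\bun$ being irrelevant to the construction.
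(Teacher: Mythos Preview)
Your proposal is correct and matches the paper's approach exactly: the paper simply notes (in the sentence immediately preceding the corollary) that a tensor product of finite-dimensional simple $\U$-modules is a based $\U$-module by \cite[Theorem~27.3.2]{Lu94}, and then states the corollary as an immediate consequence of the preceding theorem on $\imath$-canonical bases of based modules, with no further proof given. Your additional remarks on the bar involution, lattices, and the analogy with \cite{BW13} are accurate but go beyond what the paper records.
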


 \begin{rem}\label{rem:BW16}
 The construction of the $\imath$-canonical bases in this paper follows straightforwardly from \cite{BW13}. In the ongoing work \cite{BW16}, we construct the $\imath$-canonical bases for general quantum symmetric pairs. In their preprint \cite{BK15}, Balagovic and Kolb constructed the intertwiners for general quantum symmetric pairs (with some overlap with our \cite{BW16}), which leads to the universal solutions of the (quantum) reflection equation (a generalization of the Yang-Baxter equation). 

\end{rem}


\section{Dualities}
In this section, we study various dualities between the coideal algebras and the Hecke algebras of type B/C/D. The theory is again uniform in most cases, except in subsection~\ref{subsec:C}, where we only study the duality between $\Ui_q(\mathfrak{sl}_{2r+1})$ and $\mc{H}_{C_n}$. Hence we shall simplify the notation (except in subsection~\ref{subsec:C}), and denote both quantum symmetric pairs simply by $(\U, \Ui)$, and denote the correspond index sets simply by $\I = \I_k$ and $\I^{\imath} = \I^{\imath}_k$, for $k= 2r+2$ or $2r+1$.

\subsection{The $(\Ui, \mc{H}^1_{B_m})$-duality}
We set 
$I = I_k = \{a \pm \hf \vert a \in \I = \I_k\}$. 
Let the $\Qq$-vector space $\VV := \sum _{a \in I}\Qq v_{a} $ be the natural representation of $\U$, 
hence a $\Ui$-module. The action of $\U$ on $\VV$ is given by ($i \in \I$ and $a \in I$) 
\[E_{\alpha_i} v_{a} = \delta_{i+\hf, a} v_{a-1}, \quad F_{\alpha_i} v_{a} = \delta_{i-\hf,a} = v_{a+1} \quad \text{ and } \quad K_{\alpha_i} v_{a} = q^{\delta_{i-\hf,a} - \delta_{i+\hf},a} v_a. 
\]
We shall call $\VV$ the natural representation of $\Ui$ as well. 
For $m \in {\Z_{> 0}}$, 
$\VV^{\otimes m}$ becomes a natural $\U$-module (hence a $\Ui$-module) 
via the iteration of the coproduct $\Delta$. 
Note that $\VV$ is an involutive  $\U$-module with $\Abar$ defined as 
\[\Abar(v_a) :=v_a, \quad \text{ for all } a \in I.
\]
Therefore $\VV^{\otimes m}$ is an involutive  $\U$-module 
and hence an $\imath$-involutive $\Ui$-module.

Let $W_{B_m}$ be the Coxeter groups of type $\text{B}_m$ with simple reflections $s_j, 0 \leq j \leq m-1$,
where the subgroup  generated by $s_i$, $1\leq i \leq m-1$ is isomorphic to $W_{A_{m-1}} \cong \mathfrak{S}_m$. 
The group  $W_{B_m}$ and its subgroup $\mathfrak{S}_m$
act naturally on $I^m$ on the right as follows: for any $f \in I^m$, $ 1 \leq i \leq m$, we have 
\begin{equation}  \label{eq:rightW}
f \cdot s_j = 
 \begin{cases}
 (\dots, f(j+1), f(j), \dots) , &\text{if }  j > 0;\\
 (-f(1), f(2), \dots, f(m)),& \text{if } j =0.
 \end{cases}
 \end{equation}
 
 Let $\mathcal{H}^p_{B_m}$ be the Iwahori-Hecke algebra of type $B_m$ with two parameters $p$ and $q$ over $\mathbb Q(q, p)$. 
It is generated by $H^p_0, H_1, H_2, \dots , H_{m-1}$, subject to the following relations ($i, j >0$),
\begin{equation}\label{eq:HeckeB}
\begin{split}
(H^p_0-p^{-1})(H^p_0 +p) &= 0 \quad \text{ and } \quad (H_i -q^{-1})(H_i +q) = 0,  
 \\
H_i H_{i+1} H_i &= H_{i+1} H_i H_{i+1}, 
 \\
H_i H_j &= H_j H_i, \qquad \qquad\qquad\qquad  \quad\qquad \qquad \text{for } |i-j| >1, 
\\
H^p_0 H_1 H^p_0 H_1&=H_1H^p_0 H_1H^p_0 \quad \text{ and } \quad H^p_0 H_i = H_i H^p_0 , \qquad \text{for } i >1.
\end{split}
\end{equation}
The bar involution on $\mathcal
H^p_{B_m}$ is the unique anti-linear ($\overline{q} =q^{-1}$ and $\overline{p} = p^{-1}$) automorphism defined by
$\overline{H_i} =H_{i}^{-1}$ and $\overline {H^p_0} = (H^p_0)^{-1}$.
 
Let $\mathcal{H}^1_{B_m}$ be the degenerate Iwahori-Hecke algebra of type $B_m$ over $\mathbb Q(q)$ with the parameter $p=1$. We shall write the generator $H^p_0$ as $s_0$ in this case. Note that we have $s_0^2 =1$ and $\overline{s_0} = s_0$.

For any $f \in I^m$, we can view $f$ as a function from the set $\{1, 2, \dots, m\}$ to $I^m$. Thus we define
$
M_f= v_{f(1)} \otimes \cdots \otimes v_{f(m)}.
$
The Weyl group $W_{B_m}$ acts on $I^m$ by \eqref{eq:rightW} as before. 
Now the degenerate Hecke algebra $\HBm^1$  acts on the $\Qq$-vector space $\VV^{\otimes m}$ as follows ($ a > 0$): 
\begin{align*} 
 M_f \cdot H_i &=
 \begin{cases}
 q^{-1}M_f, & \text{ if }  f(i) = f(i+1);\\
 M_{f \cdot  s_i}, & \text{ if }  f(i) < f(i+1);\\
 M_{f \cdot  s_i} + (q^{-1} - q) M_{f}, & \text{ if } f(i) > f(i+1);
 \end{cases}\\
 M_f \cdot s_0 &= M_{f \cdot s_0}.
\end{align*}

Introduce the $\Qq$-subspaces of $\VV$:
\begin{align*}
\VV_{-}  &=\bigoplus_{i \in I} \Qq (v_{-i} - v_{i}),
  \\
\VV_{+}  &= \bigoplus_{i \in I} \Qq (v_{-i} +  v_{i} ).
\end{align*}

The following lemma follows from direct computation.

\begin{lem}  \label{int:lem:V+-}
$\VV_-$ and $\VV_+$ are $\Ui$-submodules of $\VV$.
Moreover, we have   
$
\VV = \VV_- \oplus \VV_+.
$
\end{lem}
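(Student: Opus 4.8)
The plan is to verify the two claims directly from the definition of the $\Ui$-action on $\VV$. Since $\VV$ is a weight module with one-dimensional weight spaces, the simplest route is to examine how the generators $\be_{\alpha_i}$, $\bff_{\alpha_i}$, $\bk_{\alpha_i}$ (and $\bt$, in the even case) act on the vectors $v_{-i}\pm v_i$, using the explicit images of these generators under the embedding $\imath$ given in Proposition~\ref{int:prop:embedding} (respectively Proposition~\ref{prop:embedding}) together with the $\U$-action formulas for $E_{\alpha_i}, F_{\alpha_i}, K_{\alpha_i}$ on $\VV$. First I would set $I^\imath$ to be the relevant index set and compute, for each $a\in I$, the action of the Chevalley-type generators on $v_a$; the key point is that $\be_{\alpha_i}$ acts as $E_{\alpha_i}+F_{\alpha_{-i}}K^{-1}_{\alpha_i}$ (up to the precise form in each case), so it sends $v_a$ into the span of $v_{a-1}$ and $v_{-a+1}$-type vectors, i.e.\ it moves an index $a$ to $a\mp 1$ and simultaneously can flip sign via the $F_{\alpha_{-i}}$ term.

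The heart of the matter is the behaviour at the ``branch'' of the Dynkin diagram, i.e.\ the action of $\be_{\hf}, \bff_{\hf}$ in the odd case, or of $\bt$ in the even case. Away from the branch the computation is the ordinary type-$A$ one twisted by $\inv$, and one checks that the map $v_a\mapsto v_{-a}$ intertwines the two halves of the diagram, so that the symmetric combination $v_{-i}+v_i$ and the antisymmetric combination $v_{-i}-v_i$ each span invariant subspaces under those generators. At the branch I would plug $v_{\hf}, v_{-\hf}$ (odd case) or $v_{\hf}\pm v_{-\hf}$ (even case) into the formulas for $\imath(\be_{\hf})$, $\imath(\bff_{\hf})$, or $\imath(\bt)=E_{\alpha_0}+qF_{\alpha_0}K^{-1}_{\alpha_0}$, and verify that $\VV_-$ and $\VV_+$ are each preserved; this is where the specific choice of parameters in the embedding (the relevant remark records that it differs from \cite{BW13}) matters, and it is the step I expect to require the most care — a sign or a power of $q$ that does not cooperate would break the invariance, so the parameters have presumably been chosen precisely so that it works.

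Once invariance of $\VV_-$ and $\VV_+$ under all generators of $\Ui$ is established, the decomposition $\VV=\VV_-\oplus\VV_+$ is immediate: the $2|I|$ vectors $\{v_{-i}-v_i\}_{i\in I}\cup\{v_{-i}+v_i\}_{i\in I}$ span $\VV$ (each $v_a$ is recovered as a $\hf$-combination, using that $2$ is invertible in $\Qq$), and dimension count (or direct linear independence) gives that the sum is direct. So the proof reduces to a finite, essentially mechanical check of the generator actions, organized as: (i) off-branch generators, handled uniformly via the $\inv$-symmetry; (ii) the branch generator(s), handled by explicit substitution; (iii) the vector-space decomposition, which is elementary. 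The only genuine obstacle is bookkeeping at step (ii), and the statement that ``the following lemma follows from direct computation'' signals that the authors regard even that as routine.
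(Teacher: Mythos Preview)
Your proposal is correct and matches the paper's approach exactly: the paper offers no proof beyond the remark that the lemma ``follows from direct computation,'' and what you outline is precisely that computation, organized sensibly into the off-branch generators, the branch generator(s), and the elementary linear-algebra decomposition. One small bookkeeping point: since $I$ is stable under $i\mapsto -i$, the list $\{v_{-i}\pm v_i\}_{i\in I}$ contains each nonzero vector twice (and $v_0-v_0=0$ when $0\in I$), so the honest count is $\dim\VV_-+\dim\VV_+=|I|=\dim\VV$, not $2|I|$; this does not affect your argument but tidies up the directness step.
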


 Let $s$ be the largest number in $\I$. Now we fix 
$\zeta$ in \eqref{eq:zeta}  such that $\zeta (\varepsilon_{-s}) = 1$. It follows that
$$
\zeta({\varepsilon_{s -i}}) =
 (-q)^{-2s+i}, \qquad \text{ for } s-i \in \I.
 $$
Let us compute the $\Ui$-homomorphism
$\mc{T} = \Upsilon\circ \widetilde{\zeta} \circ T_{w_0}$ (see Proposition~\ref{prop:mcT}) 
on the $\U$-module $\VV$; we recall that $w_0$ here is associated to $\U$ instead of $W_{B_m}$ or $W_{A_{m-1}}$. 

\begin{lem}  \label{int:lem:T}
The $\Ui$-isomorphism $\mc{T}^{-1}$ on $\VV$ acts as $(- \id)$ on the submodule $\VV_-$ and 
as $ \id$ on the submodule $\VV_+$. 
\end{lem}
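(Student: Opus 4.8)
The plan is to compute the three factors of $\mc{T} = \Upsilon \circ \widetilde{\zeta} \circ T_{w_0}$ separately on $\VV$ and then multiply them; I expect that $\mc{T}$ will turn out to be exactly the linear involution $\tau \colon v_a \mapsto v_{-a}$ of $\VV$, so that $\mc{T}^{-1} = \mc{T} = \tau$ and the assertion follows at once from $\tau(v_{-i} - v_i) = -(v_{-i} - v_i)$ and $\tau(v_{-i} + v_i) = v_{-i} + v_i$, together with \lemref{int:lem:V+-}.

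First I would pin down $T_{w_0}$ on $\VV$. Since the weights of $\VV$ form a single $W$-orbit $\{\varepsilon_a \mid a \in I\}$ on which $w_0$ acts by $\varepsilon_a \mapsto \varepsilon_{-a}$, we have $T_{w_0}(v_a) = c_a v_{-a}$ for scalars $c_a \in \Qq$. Applying $T_{w_0}$ to the identity $E_{\alpha_i} v_{i+\hf} = v_{i-\hf}$, using that $T_{w_0}$ is a module map and the formula $T_{w_0}(E_{\alpha_i}) = -F_{\alpha_{-i}} K_{\alpha_{-i}}$ of \lemref{lem:Tw0}, yields the recursion $c_{a-1} = -q\, c_a$; and since $T_{w_0}$ sends the lowest weight vector of a finite-dimensional module to its highest weight vector with coefficient $1$ we get $c_s = 1$, so $T_{w_0}(v_a) = (-q)^{s-a} v_{-a}$ for all $a \in I$. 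Next, by the normalization of $\zeta$ fixed just above the lemma, $\widetilde{\zeta}$ is diagonal on $\VV$ with eigenvalue $\zeta(\varepsilon_a) = (-q)^{-s-a}$ on $v_a$; hence $\widetilde{\zeta} \circ T_{w_0}(v_a) = (-q)^{s-a}(-q)^{a-s} v_{-a} = v_{-a}$.

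The remaining, and main, step is to show that $\Upsilon$ acts as the identity on $\VV$. Write $\Upsilon = \sum_\mu \Upsilon_\mu$ with $\Upsilon_\mu \in {}_\mA\U^-_{-\mu}$. Then $\Upsilon_\mu$ maps $\VV_{\varepsilon_b}$ into $\VV_{\varepsilon_b - \mu}$, which vanishes unless $\varepsilon_b - \mu$ is again a weight of $\VV$, i.e.\ unless $\mu = \varepsilon_b - \varepsilon_c$ for some index $c > b$ in $I$; combining this with $\mu^{\inv} = \mu$ (recall $\Upsilon_\mu = 0$ unless $\mu^{\inv} = \mu$) forces $c = -b$ and $b < 0$. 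So the only conceivable nontrivial contribution of $\Upsilon$ on $\VV$ is an ``extremal straightening'' $\Upsilon(v_b) = v_b + \gamma_b v_{-b}$ for $b < 0$, from $\Upsilon_{\varepsilon_b - \varepsilon_{-b}}$. I would then prove $\gamma_b = 0$ by computing this $\Upsilon_\mu$ out of the defining relation $\imath(\psi_{\imath} u)\Upsilon = \Upsilon\, \psi(\imath(u))$ of \eqnref{eq:star}: matching the weight-zero (Cartan) part along the $\mu$-string forces the coefficient of its bottom term to vanish, something already visible in the rank-one reduction, where the specific parameters of the embedding $\imath$ come in. (Equivalently, for the present $\imath$ the $\U$-module $\VV$ with $\overline{v_a} := v_a$ is itself $\imath$-involutive, so the $\imath$-involution $\Upsilon \circ \psi$ on $\VV$ must coincide with $\psi$ on the bar-invariant basis $\{v_a\}$, forcing $\Upsilon|_{\VV} = \id$.)

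Granting this, the three computations combine to $\mc{T}(v_a) = \Upsilon(v_{-a}) = v_{-a}$ for all $a \in I$, i.e.\ $\mc{T} = \tau$; then $\mc{T}^{-1} = \mc{T}$, and restricting to the $\bun$-submodules $\VV_{\pm}$ of \lemref{int:lem:V+-} gives the stated $-\id$ on $\VV_-$ and $\id$ on $\VV_+$. I expect the vanishing of the straightening coefficients $\gamma_b$ to be the only genuinely delicate point; a fallback would be to first show that $\VV_-$ and $\VV_+$ are non-isomorphic irreducible $\bun$-modules, so that $\mc{T}$ automatically preserves each summand and, by Schur's lemma, acts there by a scalar, and then to determine the two scalars by evaluating $\mc{T}$ on a single vector in each.
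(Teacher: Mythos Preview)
Your computation of $T_{w_0}$ and $\widetilde{\zeta}$ matches the paper exactly, and the conclusion $\widetilde{\zeta}\circ T_{w_0}(v_a)=v_{-a}$ is precisely the paper's formula~\eqref{eq:mcTtos0C}. The divergence is in how you handle $\Upsilon$.

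The paper does \emph{not} attempt to show $\Upsilon|_{\VV}=\id$. Instead it explicitly computes a single low-degree term of $\Upsilon$ (namely $\Upsilon_{\alpha_{-\hf}+\alpha_{\hf}}=-\qq\,F_{\alpha_{-\hf}}F_{\alpha_{\hf}}$), observes that this particular monomial kills the relevant vector of $\VV$ by the order of the $F$'s, and thereby checks \eqref{int:eq:mcT1}--\eqref{int:eq:mcT3} for the smallest index. The extension to all of $\VV_{\pm}$ is then obtained from the fact that $\mc{T}^{-1}$ is a $\Ui$-isomorphism and $\VV_{\pm}$ are $\Ui$-submodules --- exactly your ``fallback'' argument. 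So your fallback is not a fallback at all: it is the paper's actual route.

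Your main proposal, to prove $\Upsilon|_{\VV}=\id$ outright, is true a posteriori but your two justifications are not complete. The direct computation of $\gamma_b$ from \eqref{eq:star} is only sketched (``matching the weight-zero part\dots already visible in the rank-one reduction''); to make it honest you would have to isolate, for each $b<0$, the coefficient of the unique monomial $F_{\alpha_{-b-\hf}}\cdots F_{\alpha_{b+\hf}}$ in $\Upsilon_{\varepsilon_b-\varepsilon_{-b}}$ that survives on $\VV$, which is more work than the paper does. Your parenthetical alternative --- that $\VV$ with $\overline{v_a}=v_a$ is already $\imath$-involutive, forcing $\Upsilon\circ\psi=\psi$ --- has a genuine gap: even if one checks by hand that the $\Ui$-generators act on $\{v_a\}$ with bar-invariant coefficients, $\imath$-involutions on a reducible module are not unique (two of them differ by a $\Ui$-linear automorphism, which on $\VV=\VV_+\oplus\VV_-$ need not be the identity), so you cannot conclude $\Upsilon|_\VV=\id$ from this alone. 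In short, your plan is sound and lands in the same place, but the clean argument is the one you labelled as a fallback.
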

 
\begin{proof}
First one computes that the action of $T_{w_0}$ on $\VV$ is given by 
$$
T_{w_0} (v_{-s+i}) =(-q)^{2s-i}v_{s-i}, 
\qquad \text{ for }  0 \le i \le 2s.
$$ 
Hence 
\begin{equation}\label{eq:mcTtos0C}
\widetilde{\zeta} \circ T_{w_0}(v_a) =
v_{ a\cdot s_0}
\end{equation}

One computes the first few terms of $\Upsilon$. For example we have $\Upsilon_{\alpha_{-\hf}+\alpha_{\hf}} = -\qq F_{\alpha_{-\hf}}F_{\alpha_{\hf}}$ for the quantum symmetric pair $(\U_q(\mathfrak{sl}_{2r+2}), \Ui_q(\mathfrak{sl}_{2r+2}))$.
Therefore using $\mc{T} = \Upsilon\circ \widetilde{\zeta} \circ T_{w_0}$ we have (for $i \in \I^{\imath}$)
\begin{align}
\mc{T}^{-1} v_0 &=  v_0\label{int:eq:mcT1} \quad (\text{for the pair $(\U_q(\mathfrak{sl}_{2r+2}), \Ui_q(\mathfrak{sl}_{2r+2}))$}),\\
\mc{T}^{-1} ( v_{-i} - v_{i}) &= (-1) (v_{-i} - v_{i})\label{int:eq:mcT2},\\
\mc{T}^{-1} (v_{-i}+v_{i}) &= (v_{-i}+v_{i})\label{int:eq:mcT3}.
\end{align}
The lemma now follows from Lemma~\ref{int:lem:T},
since  $\mc{T}^{-1}$ is a $\Ui$-isomorphism.
\end{proof}

\begin{rem}
We remind the readers that the intertwiner $\Upsilon$ associated with the quantum symmetric pair $(\U, \Ui)$ in this paper is different from the one in \cite{BW13}, since we are considering quantum symmetric pairs with different parameters. This leads to different actions of $\mc{T}$ on the natural representation $\VV$ of $\U$ (c.f. \cite[Lemma~5.3]{BW13}). 
\end{rem}

We have the following generalization of Schur-Jimbo duality, whose proof follows from \cite[Theorem~5.4]{BW13}.

\begin{thm}[$(\Ui, \mc{H}^1_{B_m})$-duality] \label{int:thm:SchurB}
\begin{enumerate}
\item
The action of $\mc{T}^{-1} \otimes \id^{m-1}$ coincides with the action of 
$s_0 \in \mathcal{H}^1_{B_m}$ on $\VV^{\otimes m}$. 

\item
The actions of $\Ui$ and $\mathcal{H}^1_{B_m}$ on $\VV^{\otimes m}$ 
commute with each other, and they form double centralizers. 
\end{enumerate}
\end{thm}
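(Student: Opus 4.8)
The plan is to establish the two parts of \thmref{int:thm:SchurB} by bootstrapping from the Schur--Jimbo duality between $\U$ and $\mathfrak{S}_m$ (via the subalgebra of $\mathcal{H}^1_{B_m}$ generated by $H_1,\dots,H_{m-1}$) together with the analysis of $\mc{T}$ on $\VV$ carried out in \lemref{int:lem:T}. For part (1), I would first recall that by construction $\mc{T} = \Upsilon \circ \widetilde{\zeta} \circ T_{w_0}$ is a $\Ui$-isomorphism of $\VV$, and by \lemref{int:lem:T} it acts as $-\id$ on $\VV_-$ and $\id$ on $\VV_+$; since $\VV = \VV_- \oplus \VV_+$ and, by \eqnref{eq:mcTtos0C}, $\widetilde{\zeta}\circ T_{w_0}$ already realizes the sign-flip $v_a \mapsto v_{a \cdot s_0}$ up to the lower-order corrections coming from $\Upsilon$, one checks directly on the two eigenspaces that $\mc{T}^{-1}$ agrees with the operator $M_f \cdot s_0 \mapsto M_{f\cdot s_0}$ restricted to the first tensor factor. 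Concretely, $\mc{T}^{-1}(v_{-i}-v_i) = -(v_{-i}-v_i) = (v_i - v_{-i})$ and $\mc{T}^{-1}(v_{-i}+v_i) = v_{-i}+v_i$, and one verifies these coincide with $s_0$ acting as $v_a \mapsto v_{-a}$ on the respective combinations; hence $\mc{T}^{-1}\otimes \id^{m-1}$ and the action of $s_0$ coincide on all of $\VV^{\otimes m}$.

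For part (2), the commutation of the two actions is the easy half: the generators $H_1, \dots, H_{m-1}$ commute with $\Ui$ because they already commute with the larger algebra $\U$ (ordinary Schur--Jimbo duality, since $\Ui$ acts through $\U$ via the embedding $\imath$), while $s_0 = \mc{T}^{-1}\otimes \id^{m-1}$ commutes with $\Ui$ because $\mc{T}$ is a $\Ui$-module isomorphism on $\VV$ (\propref{prop:mcT}) and $\Ui$ acts on $\VV^{\otimes m}$ through a coideal coproduct landing in $\Ui \otimes \U \otimes \cdots \otimes \U$, so the $\Ui$-action on the first slot is compatible with applying $\mc{T}^{-1}$ there; one spells this out using the explicit coproduct formulas from Propositions~\ref{int:prop:coproduct} and \ref{prop:coproduct}. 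Since $H_1, \dots, H_{m-1}$ and $s_0$ generate $\mathcal{H}^1_{B_m}$, the two actions commute.

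The double-centralizer property is the substantive part, and I expect it to be the main obstacle. The approach I would take is exactly that of \cite[Theorem~5.4]{BW13}: one shows that $\Ui$ surjects onto $\operatorname{End}_{\mathcal{H}^1_{B_m}}(\VV^{\otimes m})$ and, conversely, that $\mathcal{H}^1_{B_m}$ surjects onto $\operatorname{End}_{\Ui}(\VV^{\otimes m})$. For the first surjection, one decomposes $\VV = \VV_- \oplus \VV_+$ as $\Ui$-modules (\lemref{int:lem:V+-}) and uses that $\VV^{\otimes m}$, as a module over the algebra generated by $\Ui$, already contains the image of $\U$ acting through $\imath$; since $\U$ together with $s_0$ generates enough endomorphisms, and $s_0$ is expressed via $\mc{T}$ which lies in (a completion of) $\Ui$-module maps, one deduces $\Ui$ fills up the $\mathcal{H}^1_{B_m}$-centralizer. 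For the second surjection, the cleanest route is a dimension/specialization argument: at generic $q$ the algebra $\mathcal{H}^1_{B_m}$ and its image have the expected size, and one can compare with the $q=1$ (classical) picture, where $\VV^{\otimes m}$ as a $\big(\mathfrak{gl}\text{-type}, W_{B_m}\big)$-bimodule is understood, and then invoke semisimplicity/flatness to transport the double-centralizer statement to generic $q$. The delicate point is that $\mathcal{H}^1_{B_m}$ is a Hecke algebra with \emph{unequal} parameters ($p=1$), so one cannot quote the equal-parameter results of \cite{BW13} verbatim; however, the $p=1$ specialization is precisely the group-algebra-like case for the $s_0$-generator (since $s_0^2 = 1$), which actually simplifies the representation theory, and the bimodule decomposition goes through with $W_{D_m}$-type and sign-twisted pieces. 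I would therefore structure the proof so that the only genuinely new input beyond \cite{BW13} is the identification in part (1) and \lemref{int:lem:T}, with the double-centralizer deduced formally from semisimplicity of $\mathcal{H}^1_{B_m}$ at generic $q$ together with a count of the common invariants.
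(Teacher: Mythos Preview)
Your proposal is correct and follows essentially the same approach as the paper: the paper's proof is a one-line deferral to \cite[Theorem~5.4]{BW13}, and you have unpacked that deferral, deducing part~(1) directly from \lemref{int:lem:T} (since $s_0: v_a \mapsto v_{-a}$ acts as $-\id$ on $\VV_-$ and $\id$ on $\VV_+$, matching $\mc{T}^{-1}$ exactly), obtaining the commutation in part~(2) from Schur--Jimbo on $H_1,\dots,H_{m-1}$ together with \propref{prop:mcT} and the coideal coproduct on $s_0$, and outlining the double-centralizer via the semisimplicity/specialization argument of \cite{BW13}. One minor point: your sketch of the surjection $\Ui \twoheadrightarrow \operatorname{End}_{\mathcal{H}^1_{B_m}}(\VV^{\otimes m})$ via ``$\U$ together with $s_0$ generates enough endomorphisms'' is imprecisely stated, since $\U$ does not commute with $s_0$ and $s_0$ being a $\Ui$-map does not by itself enlarge the image of $\Ui$; the cleaner route (as you also indicate) is the dimension/flatness comparison with $q=1$, which is how \cite{BW13} proceeds.
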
 

\begin{rem}
By variations of the choices of the subalgebras  (Proposition~\ref{int:prop:embedding} and Proposition~\ref{prop:embedding}), we can obtain the coideal subalgebras $\Ui$ that forms double centralizers with the Hecke algebra $\mathcal{H}^p_{B_m}$ of two parameters, when acting on the tensor product $\VV^{\otimes m}$.
\end{rem}




\subsection{The Hecke algebra $\mathcal{H}_{D_m}$ of type D}\label{subsec:UiD}

Let $W_{D_m}$ be the Coxeter group of type $D_m (m \ge 2)$ with simple reflections $s^d_0$ and $s_j$, $1 \le j \le m-1$. The Coxeter group $W_{D_m}$ can be realized as a subgroup of $W_{B_m}$ via the following embedding : $s^d_0 \mapsto s_0 s_1 s_0$ and $s_i \mapsto s_i$ for $i \ge 1$. When $m=1$, we understand $W_{D_m}$ as the trivial group. The Weyl group $W_{D_m}$ acts on the set $I^m$ via the embedding. 

Let $\mathcal{H}_{D_m}$ be the Iwahori-Hecke algebra of type $D_m$ over $\Qq$. It is generated by $H_0$, $H_1$, $\dots$, $H_{m-1}$, subject to the following relations:
\begin{align*}
(H_i -q^{-1})(H_i +q) &= 0,   & \text{for } i \geq 0, &
 \\
H_i H_{i+1} H_i &= H_{i+1} H_i H_{i+1}, & \text{for } i> 0,&
 \\
H_i H_j &= H_j H_i, & \text{for } |i-j| >1, &
\\
H_0 H_2 H_0 = H_2 H_0 H_2 \quad &\text{ and } \quad H_0H_i=H_iH_0,  &\text{for } i \neq 2.
\end{align*}
The bar involution on $\mathcal
H_{D_m}$ is the unique anti-linear involution  defined by
$\overline{H_i} =H_{i}^{-1}$ and $\overline{q} =q^{-1},$
for all $ 0 \le i \le m-1$.

\begin{lem}\label{lem:HDtoHB}
There is a $\Qq$-algebra embedding $\rho : \mc {H}_{D_m} \rightarrow \mc{H}^1_{B_m}$ such that 
\[
\rho (H_0) = s_0 H_1 s_0 \quad \text{ and } \quad \rho (H_i) = H_i, \quad \text{for } i \ge 1.
\]
 Moreover, $\rho$ commutes with the bar involutions, that is, $\rho (\overline {h}) = \overline {\rho (h)}$ for $h \in \mc{H}_{D_m}$. (The bar involution on the left hand side is the bar involution on $\mc{H}_{D_m}$, while the bar involution on the right hand side is the bar involution on $\mc{H}^1_{B_m}$.)
\end{lem}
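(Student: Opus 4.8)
The plan is to verify directly that the assignment $H_0 \mapsto s_0 H_1 s_0$, $H_i \mapsto H_i$ for $i \ge 1$, respects all the defining relations of $\mc{H}_{D_m}$ listed above, and then to argue injectivity. First I would observe that the elements $\rho(H_i) = H_i$ for $i \ge 1$ already satisfy the type $A_{m-1}$ braid relations inside $\mc{H}^1_{B_m}$, so the only relations that need checking are those involving $H_0$: the quadratic relation $(H_0 - q^{-1})(H_0 + q) = 0$; the commutation $H_0 H_i = H_i H_0$ for $i \ne 2$ (including $i = 0$, which is vacuous, and in particular $i = 1$); and the braid relation $H_0 H_2 H_0 = H_2 H_0 H_2$.

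For the quadratic relation, set $\tilde H_0 := s_0 H_1 s_0$. Since $s_0^2 = 1$, conjugation by $s_0$ is an algebra automorphism of $\mc{H}^1_{B_m}$, so $(\tilde H_0 - q^{-1})(\tilde H_0 + q) = s_0 (H_1 - q^{-1})(H_1 + q) s_0 = 0$. For the commutation with $H_i$, $i \ge 3$: both $s_0$ and $H_1$ commute with $H_i$ for $i \ge 3$ (using $|1 - i| > 1$ and $s_0 H_i = H_i s_0$ for $i > 1$ from \eqref{eq:HeckeB}), hence $\tilde H_0 H_i = H_i \tilde H_0$. The commutation $\tilde H_0 H_1 = H_1 \tilde H_0$ is the genuinely nontrivial case: expanding, it reads $s_0 H_1 s_0 H_1 = H_1 s_0 H_1 s_0$, which is precisely the four-term relation $H_0^1 H_1 H_0^1 H_1 = H_1 H_0^1 H_1 H_0^1$ (with $H_0^1 = s_0$) in the last line of \eqref{eq:HeckeB}. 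The braid relation $\tilde H_0 H_2 \tilde H_0 = H_2 \tilde H_0 H_2$ is the main obstacle: I would prove it by a direct manipulation in $\mc{H}^1_{B_m}$, using that $s_0$ commutes with $H_2$ (since $2 > 1$) to reduce the left side to $s_0 (H_1 s_0 H_2 s_0 H_1) s_0 = s_0 H_1 H_2 H_1 s_0$ — wait, more carefully: $\tilde H_0 H_2 \tilde H_0 = s_0 H_1 s_0 H_2 s_0 H_1 s_0 = s_0 H_1 H_2 H_1 s_0$ using $s_0 H_2 = H_2 s_0$ twice and $s_0^2 = 1$; similarly $H_2 \tilde H_0 H_2 = H_2 s_0 H_1 s_0 H_2 = s_0 H_2 H_1 H_2 s_0$; and these agree by the $A$-type braid relation $H_1 H_2 H_1 = H_2 H_1 H_2$. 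So in fact this case, once the commutations are in hand, collapses to the braid relation among the $H_i$'s.

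Having checked the relations, $\rho$ is a well-defined $\Qq$-algebra homomorphism. For injectivity I would argue as follows: the restriction of $\rho$ sends the parabolic subalgebra generated by $H_1, \dots, H_{m-1}$ (a copy of $\mc{H}_{A_{m-1}}$) isomorphically onto the corresponding subalgebra of $\mc{H}^1_{B_m}$. Since $\mc{H}_{D_m}$ is a free module over this parabolic subalgebra with basis indexed by minimal-length coset representatives of $W_{A_{m-1}} \backslash W_{D_m}$, and $\rho$ is compatible with the embedding $W_{D_m} \hookrightarrow W_{B_m}$ described in Section~\ref{subsec:UiD} (so that $\rho(H_w)$ for $w \in W_{D_m}$ picks out the corresponding standard basis element $H_{w}$ of $\mc{H}^1_{B_m}$ up to lower-order terms, by a standard length/leading-term argument using $\ell_{B_m}(w) = \ell_{D_m}(w)$ for $w \in W_{D_m}$), the images $\rho(H_w)$, $w \in W_{D_m}$, are linearly independent over $\Qq$; hence $\rho$ is injective. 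Finally, compatibility with the bar involutions follows since $\overline{s_0} = s_0$, $\overline{H_i} = H_i^{-1}$, and bar is an anti-linear algebra automorphism, so $\overline{\rho(H_0)} = \overline{s_0 H_1 s_0} = s_0 H_1^{-1} s_0 = (s_0 H_1 s_0)^{-1} = \rho(H_0^{-1}) = \rho(\overline{H_0})$, and $\overline{\rho(H_i)} = H_i^{-1} = \rho(\overline{H_i})$ for $i \ge 1$; since both bar involutions are anti-linear and multiplicative and the $H_i$ generate, $\rho \circ \overline{\phantom{h}} = \overline{\phantom{h}} \circ \rho$ on all of $\mc{H}_{D_m}$. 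The one point requiring a little care is the injectivity argument; everything else is a short computation inside \eqref{eq:HeckeB}.
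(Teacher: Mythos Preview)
Your verification of the defining relations and of the bar-involution compatibility is correct and matches the paper's proof essentially line for line: the paper checks the quadratic relation for $s_0 H_1 s_0$, the braid relation with $H_2$ via $s_0 H_1 H_2 H_1 s_0 = s_0 H_2 H_1 H_2 s_0$, the commutation with $H_i$ for $i \ne 0,2$, and then $\overline{\rho(H_0)} = s_0 H_1^{-1} s_0 = \rho(\overline{H_0})$, exactly as you do. The paper does not argue injectivity at all --- it simply asserts that $\rho$ is an embedding after checking the relations --- so your attempt there goes beyond what the paper supplies.

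That attempt, however, rests on a false claim: it is \emph{not} true that $\ell_{B_m}(w) = \ell_{D_m}(w)$ for $w \in W_{D_m}$ under the embedding $s_0^d \mapsto s_0 s_1 s_0$. Already $w = s_0^d$ has $\ell_{D_m}(w) = 1$ but $\ell_{B_m}(s_0 s_1 s_0) = 3$; more generally the discrepancy grows with the number of sign changes in $w$. What is true (and suffices) is that $\rho(H_w) = H_{\iota(w)}$ for every $w \in W_{D_m}$, so that the $\rho(H_w)$ are distinct standard basis elements of $\mc{H}^1_{B_m}$; but establishing this requires knowing that some reduced $D_m$-expression for $w$, after the substitution $s_0^d \rightsquigarrow s_0 s_1 s_0$, becomes a reduced $B_m$-expression, which is not automatic (e.g.\ $s_0^d s_2 s_0^d$ expands to a non-reduced word of length $7$, whereas the alternative reduced form $s_2 s_0^d s_2$ expands to a reduced word of length $5$). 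A cleaner route is to specialize at $q = 1$: the map becomes the group-algebra inclusion $\Q[W_{D_m}] \hookrightarrow \Q[W_{B_m}]$, and injectivity over $\Qq$ then follows since $\mc{H}_{D_m}$ is a free $\Qq$-module of rank $|W_{D_m}|$ and its image has the same rank.
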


\begin{proof}
It suffices to check the relations involving $H_0$. Note that we have 
\begin{align*}
 (s_0 H_1 s_0 - q^{-1})( s_0 H_1 s_0 +q) &= s_0(H_1 - q^{-1})(H_1 +q ) s_0 = 0, \\
  s_0 H_1 s_0 \cdot H_2  \cdot s_0 H_1 s_0  =  s_0 H_1 H_2 H_1 s_0 &=  s_0 H_2 H_1 H_2 s_0 = H_2 \cdot s_0 H_1 s_0 \cdot H_2,\\
  s_0 H_1 s_0 \cdot H_i &= H_i \cdot s_0 H_1 s_0 \quad \text{ for } i \neq 2, 0.
\end{align*}
This shows that $\rho$ is a homomorphism of $\Qq$-algebras. Then for dimension reason, we see that $\rho$ is an embedding. 

To show that $\rho$ commutes with the bar involutions, it suffices to show that $\rho(\overline{H_0}) = \overline{\rho (H_0)}$. This is clear since (recall $\overline{s_0} = s_0$)
\[
\rho (\overline{H_0}) = \rho (H_0^{-1}) = (s_0 H_1 s_0 ) ^{-1} = s_0 H^{-1}_1 s_0 = \overline{\rho (H_0)}.
\]
The lemma follows.
\end{proof}

Via the embedding $\rho :  \mc {H}_{D_m} \rightarrow \mc{H}^1_{B_m}$, the Hecke algebra $\mc{H}_{D_m}$ has a natural action on the tensor space $\VV^{\otimes m}$ as follows  (Note that $(f(1), f(2), \dots ) \cdot s^d_0  = (-f(2), -f(1), \dots)$): 
\begin{equation}  \label{int:eq:HBm}
 M_f H_a=
 \begin{cases}
 q^{-1}M_f, & \text{ if } a>0, f(a) = f(a+1);\\
 M_{f \cdot  s_a}, & \text{ if } a > 0, f(a) < f(a+1);\\
 M_{f \cdot  s_a} + (q^{-1} - q) M_{f}, & \text{ if } a > 0, f(a) > f(a+1);\\
 M_{f \cdot  s^d_0}, & \text{ if } a = 0, -f(1) < f(2) ;\\
 M_{f \cdot  s^d_0} + (q^{-1} -q)M_f, & \text{ if } a=0, -f(1) > f(2);\\
 q^{-1} M_f, & \text{ if } a =0, -f(1) = f(2).
 \end{cases}
\end{equation}

The following corollary follows immediately from Theorem~\ref{int:thm:SchurB}.
\begin{cor}
\begin{enumerate}
	\item	The action of $((\mc{T}^{-1} \otimes \id)  \cdot \mc{R}^{-1} \cdot (\mc{T}^{-1} \otimes \id)) \otimes \id^{m-2}$ coincides with the action of $H_0 \in \mathcal{H}_{D_m}$ on $\VV^{\otimes m}$. 
	\item	The actions of $\Ui$ and $\mathcal{H}_{D_m}$ on $\VV^{\otimes m}$ 
commute with each other.
\end{enumerate}
\end{cor}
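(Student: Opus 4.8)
The plan is to push everything through the algebra embedding $\rho\colon \mc{H}_{D_m}\hookrightarrow \mc{H}^1_{B_m}$ of \lemref{lem:HDtoHB} and then read off both assertions from \thmref{int:thm:SchurB}. Since, by definition (see the action \eqref{int:eq:HBm}), the $\mc{H}_{D_m}$-action on $\VV^{\otimes m}$ is the restriction of the $\mc{H}^1_{B_m}$-action along $\rho$, both parts are really statements about the $\mc{H}^1_{B_m}$-module $\VV^{\otimes m}$.

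For part (1), I would first recall that $\rho(H_0)=s_0H_1s_0$, so $H_0$ acts on $\VV^{\otimes m}$ exactly as $s_0H_1s_0\in\mc{H}^1_{B_m}$ does. Writing $R_a$ for the operator ``right multiplication by $a$'' on $\VV^{\otimes m}$ and using that $\mc{H}^1_{B_m}$ acts on the \emph{right} (so $M\cdot(s_0H_1s_0)=((M\cdot s_0)\cdot H_1)\cdot s_0$), one gets $R_{s_0H_1s_0}=R_{s_0}\circ R_{H_1}\circ R_{s_0}$. Next, \thmref{int:thm:SchurB}(1) identifies $R_{s_0}=\mc{T}^{-1}\otimes\id^{m-1}$, while the type-$A$ sub-duality contained in \thmref{int:thm:SchurB} (equivalently the explicit formula for the action of $H_1$, which alters only the first two tensor slots) identifies $R_{H_1}=\mc{R}^{-1}\otimes\id^{m-2}$ for the appropriate braiding operator $\mc{R}$ on $\VV^{\otimes2}$. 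Composing the three operators and noting that all of them act trivially on the last $m-2$ tensor factors yields
\[
R_{H_0}=\bigl((\mc{T}^{-1}\otimes\id)\circ\mc{R}^{-1}\circ(\mc{T}^{-1}\otimes\id)\bigr)\otimes\id^{m-2},
\]
which is precisely the claimed formula.

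For part (2), since $\rho$ is injective, its image $\rho(\mc{H}_{D_m})$ is a subalgebra of $\mc{H}^1_{B_m}$, and the $\mc{H}_{D_m}$-action on $\VV^{\otimes m}$ factors through the $\mc{H}^1_{B_m}$-action. By \thmref{int:thm:SchurB}(2) the $\Ui$-action commutes with the whole $\mc{H}^1_{B_m}$-action, hence a fortiori with the action of the subalgebra $\rho(\mc{H}_{D_m})$; this is exactly the asserted commutation. Equivalently, $\VV^{\otimes m}$ is a $(\Ui,\mc{H}^1_{B_m})$-bimodule, hence a $(\Ui,\mc{H}_{D_m})$-bimodule.

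I do not expect a genuine obstacle here---this is why the corollary is stated as ``immediate.'' The only points that demand care are bookkeeping: getting the order of composition right because the Hecke algebra acts on the right (so $s_0H_1s_0$ unwinds to $R_{s_0}\circ R_{H_1}\circ R_{s_0}$), keeping straight which tensor slots each of $\mc{T}^{-1}$ and $\mc{R}^{-1}$ touches when one regroups $R_{H_0}$ as an operator on the first two factors tensored with $\id^{m-2}$, and matching the paper's sign/inverse convention for $\mc{R}$ so that the action of $H_1$ is recorded as $\mc{R}^{-1}\otimes\id^{m-2}$ rather than $\mc{R}\otimes\id^{m-2}$.
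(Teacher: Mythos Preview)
Your proposal is correct and matches the paper's approach: the paper gives no explicit proof, stating only that the corollary ``follows immediately from Theorem~\ref{int:thm:SchurB},'' and your argument is exactly the unpacking of that claim via the embedding $\rho$ from Lemma~\ref{lem:HDtoHB}. The bookkeeping you flag (right action order, which tensor slots are touched, and the $\mc{R}^{-1}$ convention for $H_1$) is indeed all that needs to be checked.
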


\begin{rem}
The commuting relation of the actions of $\Ui$ and $\mathcal{H}_{D_m}$ on $\VV^{\otimes m}$  has also been observed in  \cite[\S7.6]{ES13} by direct computation without using the Theorem~\ref{int:thm:SchurB}.
\end{rem}

\begin{definition}
An element $f \in I^m$ is called ($D$-)anti-dominant, if $  |f(1) | \le f(2) \le f(3) \cdots \le f(m)$.  ($ |f(1) |$ denotes the absolute value of $f(1)$.)
\end{definition}

\begin{thm}\label{thm:samebar}
The bar involution $\ibar: \VV^{\otimes m} \rightarrow \VV^{\otimes m}$ is compatible with both the bar involution of $\mathcal{H}_{D_m}$ and the bar involution of $\Ui$; 
that is, for all $v \in \VV^{\otimes m}$,  $h \in \mathcal{H}_{D_m}$, and $u \in \Ui$, we have 
\[
\ibar(u v h) = \ibar(u) \, \ibar(v) \ov{h}  \quad \text{ and } \quad \ibar(M_f) = M_f \text{ for all $D$-anti-dominant } f .
\]
Moreover such bar involution on $\VV^{\otimes m}$ is unique.
\end{thm}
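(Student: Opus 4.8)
The plan is to realize the bar involution $\ibar$ on $\VV^{\otimes m}$ by combining the two compatibility requirements, following the strategy of \cite[\S5]{BW13} (specifically the construction leading to \cite[Theorem~5.8]{BW13}). First I would define a candidate involution explicitly: on the tensor product $\VV^{\otimes m}$, Lusztig's theory of based modules already provides the $\imath$-canonical bar involution $\Bbar = \Upsilon \circ \Abar$ (using that $\VV$ is an involutive $\U$-module and iterating the coproduct), which by construction satisfies $\Bbar(uv) = \Bbar(u)\Bbar(v)$ for $u \in \Ui$. So the $\Ui$-compatibility is free; the content is the $\mc{H}_{D_m}$-compatibility and the normalization on $D$-anti-dominant $f$. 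I would check directly, using the explicit action formulas \eqref{int:eq:HBm} and the known formulas for $\Abar$ and the first terms of $\Upsilon$ (as in \lemref{int:lem:T}), that $\Bbar(M_f H_a) = \Bbar(M_f)\,\ov{H_a}$ for each generator $H_a$, $0 \le a \le m-1$; for $a \ge 1$ this is essentially the type $A$ computation of Schur--Jimbo duality, and for $a = 0$ one uses that $\rho(H_0) = s_0 H_1 s_0$ together with part (1) of \thmref{int:thm:SchurB} identifying $s_0$ with $\mc{T}^{-1} \otimes \id^{m-1}$, which already intertwines $\Bbar$ (being built from $\Upsilon$ and $\Abar$, both bar-compatible in the appropriate sense).

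Second, I would verify the normalization $\ibar(M_f) = M_f$ for all $D$-anti-dominant $f$. For such $f$ one has $|f(1)| \le f(2) \le \cdots \le f(m)$; the point is that the monomial $M_f$ is "lowest" with respect to the relevant ordering so that the triangularity of $\Upsilon$ (each $\Upsilon_\mu \in {}_\mA\U_{-\mu}^-$ lowers weight) forces $\Bbar(M_f) = M_f$ exactly — there is no room for lower-order correction terms. Here one must be slightly careful because the sign ambiguity (the $f(1) \mapsto -f(1)$ freedom) is precisely controlled by requiring $|f(1)| \le f(2)$ rather than $f(1) \le f(2)$: this is the extra condition coming from type $D$ as opposed to type $A$, and it ensures $M_f$ stays fixed under the composite of $\widetilde{\zeta}\circ T_{w_0}$-type operators that appear. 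I would spell this out by reducing to the rank-one and rank-two cases (the submodules $\VV_\pm \subseteq \VV$ and $\VV^{\otimes 2}$) where \lemref{int:lem:T} and \eqnref{int:eq:mcT1}--\eqnref{int:eq:mcT3} give the $\mc{T}^{-1}$-action explicitly, then propagate via the tensor structure.

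Third, for uniqueness: suppose $\ibar'$ is another bar involution on $\VV^{\otimes m}$ with the stated compatibilities. Then $\ibar^{-1} \ibar' = \ibar \ibar'$ is a $\Qq$-linear (not just anti-linear) $\Ui$- and $\mc{H}_{D_m}$-bimodule automorphism of $\VV^{\otimes m}$ fixing every $M_f$ with $f$ $D$-anti-dominant. Since every $M_g$ ($g \in I^m$) can be obtained from some $D$-anti-dominant $M_f$ by acting with an element of $\mc{H}_{D_m}$ — this is where one uses that $W_{D_m}$ acts on $I^m$ with the $D$-anti-dominant elements as a transversal of the orbits (together with a filtration/triangularity argument to handle the $(q^{-1}-q)$ lower-order terms in \eqnref{int:eq:HBm}) — the automorphism must be the identity. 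Thus $\ibar = \ibar'$.

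The main obstacle I anticipate is the $a=0$ case of the $\mc{H}_{D_m}$-compatibility check, i.e.\ showing $\Bbar(M_f H_0) = \Bbar(M_f)\ov{H_0}$ where $H_0$ acts via $s_0 H_1 s_0$ and mixes the first two tensor factors in a way that involves both the $\mc{T}$-operator (through $s_0$) and an $\mc{R}$-matrix-type operator (through $H_1$), as in the corollary preceding the theorem; reconciling the $\Upsilon$-twisted bar involution with this composite action requires knowing that $\mc{T}^{-1}\otimes\id$ intertwines $\Bbar$, which in turn rests on the bar-compatibility of $\Upsilon$ and of $T_{w_0}$ — a chain of identities that is conceptually clean but where sign and $q$-power bookkeeping (different here from \cite{BW13} because of the different parameters, cf.\ the remark after \lemref{int:lem:T}) is the delicate part. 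A secondary subtlety is making the triangularity argument in the uniqueness step precise with respect to the correct partial order on $I^m$ refining the $W_{D_m}$-orbit structure.
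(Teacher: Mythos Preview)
Your compatibility and uniqueness arguments match the paper's: both factor the $\mc{H}_{D_m}$-compatibility through $\mc{H}^1_{B_m}$ via the bar-compatible embedding $\rho$ of \lemref{lem:HDtoHB}, invoking \cite[Theorem~5.8]{BW13} for the $(\Ui, \mc{H}^1_{B_m})$-bimodule bar involution $\ibar = \Upsilon \circ \Abar$. The gap is in your normalization step for $D$-anti-dominant $f$ with $f(1) < 0$. Triangularity of $\Upsilon$ forces $\ibar(M_f) = M_f$ only when $M_f$ is minimal in the relevant ordering, i.e.\ when $f$ is $B$-anti-dominant ($0 \le f(1) \le f(2) \le \cdots$); but for instance $(-1,2)$ is $D$-anti-dominant while $(1,2) \prec_B (-1,2)$, so there is genuine room for a correction term and your ``no lower-order terms'' claim fails as stated. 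Your proposed remedy via \lemref{int:lem:T} and \eqnref{int:eq:mcT1}--\eqnref{int:eq:mcT3} conflates two different maps: those formulas describe $\mc{T}^{-1}$ (the operator realizing $s_0$), not $\ibar$.

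The paper's fix uses exactly the $s_0$-compatibility you have already established, but through its Hecke-algebra role rather than as a would-be formula for $\ibar$. If $f$ is $D$-anti-dominant with $f(1) < 0$, then $f \cdot s_0$ is $D$-anti-dominant with first entry $-f(1) > 0$, hence $B$-anti-dominant, so $\ibar(M_{f \cdot s_0}) = M_{f \cdot s_0}$ by the triangularity argument (equivalently by \cite[Theorem~5.8]{BW13}). Since $M_f = M_{f \cdot s_0} \cdot s_0$ and $\overline{s_0} = s_0$ in $\mc{H}^1_{B_m}$, the already-proved $\mc{H}^1_{B_m}$-compatibility gives $\ibar(M_f) = \ibar(M_{f \cdot s_0}) \cdot \overline{s_0} = M_{f \cdot s_0} \cdot s_0 = M_f$. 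This one line replaces your entire rank-one/rank-two reduction.
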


\begin{proof}
The exact same proof as \cite[Theorem~5.8]{BW13} shows that the bar involution $\ibar = \Upsilon \circ \Abar$ is compatible with both the bar involution of $\mathcal{H}^1_{B_m}$ and the bar involution of $\Ui$. But since the embedding $\rho: \mc{H}_{D_m} \rightarrow \mc{H}^1_{B_m}$ is compatible with bar involutions (Lemma~\ref{lem:HDtoHB}), we know that $\ibar$ is compatible with the bar involution of $\mc{H}_{D_m}$. Therefore we only need to show that $\ibar(M_f) = M_f \text{ for all $D$-anti-dominant } f $.

For any $D$-anti-dominant $f \in I^m$ with $0 \le f(1)$, we have $\ibar(M_f) = M_f$ by \cite[Theorem~5.8]{BW13}. For any $D$-anti-dominant $f \in I^m$ with $f(1) < 0$, we see that $f \cdot s_0$ is still $D$-anti-dominant and $0 \le -f(1) = f(1) \cdot s_0$. We have 
\[
\ibar(M_f) =  \ibar (M_{f \cdot s_0} s_0) = \ibar (M_{f \cdot s_0})  \overline{s_0} = M_{f \cdot s_0} s_0 = M_f.
\]
Thus $\ibar (M_f) = M_f$ for all $D$-anti-dominant $f \in I^m$.

The uniqueness of such bar involution on $\VV^{\otimes m}$ follows from a  standard argument (cf. \cite[Theorem~5.8]{BW13}). The theorem follows. 
\end{proof}

It is well-known that via the action defined in \eqref{int:eq:HBm}, the tensor product $\VV^{\otimes m}$ becomes a direct sum of permutation modules of $\mc{H}_{D_m}$. Therefore the (parabolic) Kazhdan-Lusztig basis of $\mc{H}_{D_m}$ induces a (parabolic) Kazhdan-Lusztig basis (of type D) on $\VV^{\otimes m}$. Recall that $\VV^{\otimes m}$ admits an $\imath$-canonical basis by Corollary~\ref{cor:iCBontensor}. The following Corollary follows immediately from Theorem~\ref{thm:samebar}.

\begin{cor}\label{cor:samebar}
The $\imath$-canonical basis on the tensor space $\VV^{\otimes m}$ is the same as the Kazhdan-Lusztig basis of type $D$.
\end{cor}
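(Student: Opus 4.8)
The plan is to deduce Corollary~\ref{cor:samebar} by a uniqueness argument, exactly parallel to the type B case treated in \cite{BW13}. Both the $\imath$-canonical basis (Corollary~\ref{cor:iCBontensor}) and the type $D$ Kazhdan-Lusztig basis are characterized by two properties: bar-invariance and a unitriangular expansion (with off-diagonal coefficients in $q\Z[q]$) with respect to the standard monomial basis $\{M_f\}$, ordered by the appropriate Bruhat-type partial order $\preceq$. Since a bar-invariant element that is unitriangular with coefficients in $q\Z[q]$ is \emph{unique} when it exists, it suffices to check that the two bases are governed by \emph{the same} bar involution and \emph{the same} partial order on $\{M_f\}$.

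First I would invoke Theorem~\ref{thm:samebar}: the bar involution $\ibar = \Upsilon \circ \Abar$ on $\VV^{\otimes m}$ used to define the $\imath$-canonical basis is precisely the bar involution on $\VV^{\otimes m}$ compatible with the bar involution of $\mc{H}_{D_m}$ (via $\rho$) and fixing $M_f$ for $D$-anti-dominant $f$. On the Hecke-algebra side, $\VV^{\otimes m}$ decomposes as a direct sum of permutation modules $\bigoplus \mathrm{sgn}_{J}\otimes_{\mc{H}_{W_J}}\mc{H}_{D_m}$ over parabolic subgroups $W_J \subseteq W_{D_m}$, where each summand is generated by a lowest-weight vector $M_f$ with $f$ a $D$-anti-dominant representative of the corresponding double coset; the parabolic Kazhdan-Lusztig basis of Deodhar is bar-invariant for exactly this bar involution and unitriangular with respect to the standard basis $\{M_f\}$ ordered by the Bruhat order on minimal coset representatives, with coefficients in $q\Z[q]$. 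So both bases satisfy the same defining conditions, and uniqueness forces them to coincide.

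The one point requiring care — and the main obstacle — is matching the two partial orders: the order $\preceq$ on the monomial basis coming from Lusztig's based-module theory (i.e.\ the order $\prec$ in \eqref{iCB}, inherited from the tensor product of based $\U$-modules and the partial order on $\B$), versus the Bruhat order on $W_{D_m}$-coset representatives that controls the parabolic KL basis. This is the same compatibility that was established in the type B setting in \cite[\S5]{BW13}, and the argument transports verbatim: one identifies $M_f$ with a distinguished coset representative, checks that the $\mc{H}_{D_m}$-action \eqref{int:eq:HBm} realizes the standard $T_w$-action up to the usual normalization, and observes that the weight-space filtration underlying $\widehat{\U}$ refines the Bruhat order. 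Granting this, Corollary~\ref{cor:samebar} follows at once; I would simply state that ``the proof is identical to that of \cite[Theorem~5.8 and its corollary]{BW13}, using Theorem~\ref{thm:samebar} in place of the type B compatibility,'' and leave the routine verification to the reader.
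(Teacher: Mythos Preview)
Your proposal is correct and takes essentially the same approach as the paper: the paper states that the corollary ``follows immediately from Theorem~\ref{thm:samebar},'' and the implicit argument is precisely the uniqueness-of-bar-invariant-unitriangular-basis argument you spell out. Your discussion of the partial-order compatibility is more detailed than what the paper provides, but it is the standard verification underlying the claim and is not a different route.
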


\begin{rem}
Theorem~\ref{thm:samebar} and Corollary~\ref{cor:samebar} make sense in the case $m=1$ as well, where we understand $W_{D_m}$ as the trivial group. More precisely, the $\imath$-canonical basis on $\VV$ is the same as the canonical basis on $\VV$.
\end{rem}

\subsection{Bruhat orderings}\label{subsec:Bruhat}
In this subsection we show that the bar involution $\ibar$ on $\VV^{\otimes m}$ satisfies the type D Burhat ordering, which should be expected in light of Theorem~\ref{thm:samebar}. We do not need results from this section for any other part of this paper. 

Let $X(m) = \hf \Z [\epsilon_1, \epsilon_2, \dots \epsilon_m]$ and set
$$
\rho = (0 \epsilon_1) - \epsilon_2  - \cdots - (n-1)\epsilon_n.
$$ There is a non-degenerate symmetric bilinear form $(\cdot \vert \cdot)$ on $X(m)$ such that 
$(\epsilon_i \vert \epsilon_j) = \delta_{ij}
$.

There is a natural injective map $I^m \rightarrow X(m)$, defined as 
\begin{align*}
f &\mapsto \lambda_f , \text{ where } \lambda_f = \sum^m_{i=1} f(i) \epsilon_i - \rho, \quad \text{ for } f \in I^m.
\end{align*}

For any $f \in I^m$, the $\U$-weight of $M_f$ is ${\rm wt}(f) = \sum^m_{i=1} \varepsilon_{f(i)} \in \Lambda$. We define the $\Ui$-weight of $M_f$ to be ${\rm{wt}_{\imath}}(f) = \sum^m_{i=1} \overline{\varepsilon_i}$, i.e., the image of ${\rm wt}(f)$ in the quotient $\Lambda_{\inv}$. 

Note that we always have $\lambda_f - \lambda_g \in \Z[\epsilon_1, \dots, \epsilon_m]$ for any $f, g \in I^{m}=I^{m}_k$ (for both $k = 2r+1 \text{ or } 2r+2$).

\begin{definition} We define the following two partial orderings on $I^m$.

\begin{enumerate}
\item 	For any $f, g \in I^m$, we say $g \preceq_B f$ if 
\[
\rm{wt}_{\imath} (f) = \rm{wt}_{\imath} (g) \quad \text{ and }\quad  \lambda_f - \lambda _g  = a_0 (-\epsilon_1) + \sum^{m-1}_{i=1} a_i (\epsilon_i - \epsilon_{i+1}), \text{ where }a_i \in \N.\]
\item	For any $f, g \in I^m$, we say $g \preceq_D f$ if 
\[
g \preceq_B f \quad \text{ and } \quad g\cdot s_0  \preceq_B f \cdot s_0. 
\](Recall $ (f(1), \dots) \cdot s_0 = (-f(1), \dots)$.)
\end{enumerate}

\end{definition}

\begin{prop}\label{prop:bruhatD} 
Let $g ,f \in I^m$ such that $g \preceq_D f$. 
\begin{enumerate} 
\item	If $m=1$, then $f = g$.
\item
If $m \ge 2$, then we have 
\[
\lambda_f - \lambda_g = a_0 (-\epsilon_1 - \epsilon_2 ) + \sum^{m-1}_{i=1} a_i (\epsilon_i - \epsilon_{i+1}), \qquad \text{ where } a_i \in \N.
\]
\end{enumerate}
\end{prop}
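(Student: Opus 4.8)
The plan is to unwind the definitions of $\preceq_B$ and $\preceq_D$ and argue directly on the weight differences in the lattice $X(m)$. Write $\lambda_f - \lambda_g = \sum_i c_i \epsilon_i$ with $c_i \in \Z$ (this is legitimate since $\lambda_f - \lambda_g \in \Z[\epsilon_1,\dots,\epsilon_m]$), and recall that $f(i) - g(i) = c_i$ for each $i$, since the $\rho$-shifts cancel. The hypothesis $g \preceq_B f$ says $\lambda_f - \lambda_g = a_0(-\epsilon_1) + \sum_{i=1}^{m-1} a_i(\epsilon_i - \epsilon_{i+1})$ with all $a_i \in \N$, and the hypothesis $g \cdot s_0 \preceq_B f \cdot s_0$ says the same for the weights after negating the first coordinate, i.e. $\lambda_{f \cdot s_0} - \lambda_{g \cdot s_0} = b_0(-\epsilon_1) + \sum_{i=1}^{m-1} b_i(\epsilon_i - \epsilon_{i+1})$ with all $b_i \in \N$. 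The point is that $\lambda_{f\cdot s_0} - \lambda_{g\cdot s_0}$ differs from $\lambda_f - \lambda_g$ exactly by flipping the sign of the $\epsilon_1$-coefficient, so comparing the two expressions coordinate-by-coordinate forces strong constraints on the $a_i$ and $b_i$.

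First I would handle $m=1$: here $g \preceq_B f$ forces $\lambda_f - \lambda_g = a_0(-\epsilon_1)$ with $a_0 \in \N$, so $f(1) - g(1) = -a_0 \le 0$; and $g\cdot s_0 \preceq_B f\cdot s_0$ forces $-f(1) - (-g(1)) = -b_0 \le 0$, i.e. $f(1) - g(1) = b_0 \ge 0$. Hence $a_0 = b_0 = 0$ and $f = g$. For $m \ge 2$, I would extract from the coordinate comparison that the $\epsilon_1$-coefficient $c_1$ of $\lambda_f - \lambda_g$ equals $-a_0 + a_1$ (from the $\preceq_B$ expression), while the $\epsilon_2$-coefficient is $c_2 = -a_1 + a_2$, etc. The second condition gives $-c_1 = -b_0 + b_1$, $c_2 = -b_1 + b_2$, and $c_i = -b_{i-1} + b_i$ for $i \ge 3$. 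Subtracting the two sets of equations for $i \ge 2$ I would show $a_i = b_i$ for all $i \ge 1$ (working down from $i = m-1$, where $c_m = -a_{m-1} = -b_{m-1}$). Then from $c_1 = -a_0 + a_1$ and $-c_1 = -b_0 + a_1$ I would get $a_0 + b_0 = 2a_1$, and I want to rewrite $\lambda_f - \lambda_g$ in the claimed form $a_0'(-\epsilon_1-\epsilon_2) + \sum_{i=1}^{m-1}a_i'(\epsilon_i-\epsilon_{i+1})$.

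The natural choice is $a_0' = \min(a_0, b_0)$; then using $-\epsilon_1 = (-\epsilon_1 - \epsilon_2) + (\epsilon_2 - \epsilon_3) + \cdots$ is not quite right, so instead I would note $-\epsilon_1 - \epsilon_2 = (-\epsilon_1) + (-\epsilon_2)$ and $-\epsilon_2 = -(\epsilon_1-\epsilon_2) + (-\epsilon_1)$ — better: directly check that $a_0(-\epsilon_1) + a_1(\epsilon_1 - \epsilon_2) = a_0'(-\epsilon_1-\epsilon_2) + (a_1 - a_0')(\epsilon_1-\epsilon_2) + (\text{correction in }\epsilon_2)$, and since $a_0 + b_0 = 2a_1 \ge 2a_0'$ we have $a_1 - a_0' \ge 0$; the leftover $\epsilon_2$-coefficient gets absorbed into $a_1' := a_1 - a_0' + (\text{something} \ge 0)$ using that $a_0, b_0 \ge a_0'$ forces the remaining discrepancy to be expressible with nonnegative coefficients in $\epsilon_2 - \epsilon_3, \dots$. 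The main obstacle I anticipate is precisely this last bookkeeping step: verifying that the "symmetrization" of the $-\epsilon_1$ term into a $-\epsilon_1 - \epsilon_2$ term leaves all remaining coefficients nonnegative, which is where both inequalities $a_0 \ge a_0'$ and $b_0 \ge a_0'$ and the relation $a_0 + b_0 = 2a_1$ must be used together; I would do this by an explicit small computation once the equalities $a_i = b_i$ ($i\ge1$) are in hand.
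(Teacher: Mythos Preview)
Your coordinate analysis is correct up to the point where you obtain $a_i = b_i$ for all $i \ge 1$ and $a_0 + b_0 = 2a_1$. But the final step is not bookkeeping: it is the heart of the proposition, and your approach as written cannot complete it. The type~D simple roots $\{-\epsilon_1-\epsilon_2,\ \epsilon_1-\epsilon_2,\ \dots,\ \epsilon_{m-1}-\epsilon_m\}$ form a \emph{basis} of $\Z[\epsilon_1,\dots,\epsilon_m]$, so the decomposition $\lambda_f-\lambda_g = a_0'(-\epsilon_1-\epsilon_2)+\sum a_i'(\epsilon_i-\epsilon_{i+1})$ is unique. Solving, one is forced to take $a_i'=a_i$ for $i\ge 2$, $a_0'=a_0/2$, and $a_1'=b_0/2$. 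Nonnegativity is then immediate, but \emph{integrality} is not: you must show that $a_0$ (equivalently $b_0$) is even. Your choice $a_0'=\min(a_0,b_0)$ is not available.

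This parity statement is exactly where the $\Ui$-weight condition $\mathrm{wt}_\imath(f)=\mathrm{wt}_\imath(g)$ (which is part of the definition of $\preceq_B$) enters, and you never invoke it. Without it the claim is false: for $m=2$ take $f=(0,0)$ and $g=(0,1)$; then $\lambda_f-\lambda_g=-\epsilon_2$ gives $a_0=a_1=1$, and since $s_0$ fixes both $f$ and $g$ one also has $b_0=b_1=1$, yet $-\epsilon_2=\tfrac12(-\epsilon_1-\epsilon_2)+\tfrac12(\epsilon_1-\epsilon_2)$ has half-integer coefficients. The paper's own proof proceeds differently: it first reduces to $m=2$ by inserting an intermediate $h$ with $\lambda_f-\lambda_h\in\Z[\epsilon_1,\epsilon_2]$, and then for $m=2$ uses the weight condition to list explicitly the eight elements $(\pm a,\pm b)$ sharing a given $\Ui$-weight, draws the Hasse diagram for $\preceq_D$, and checks the claim case by case. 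The remark following the proposition in the paper makes precisely this point: what remains after your linear algebra is to show $a_0/2\in\Z$, and that is where the weight hypothesis is consumed.
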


\begin{proof}
When $m =1$, the proposition follows from direct computation. So let us assume $m \ge 2$. It suffices to consider the case where 
\[
\lambda_f - \lambda_g \in \Z[\epsilon_1, \epsilon_2].
\]
Otherwise we can always find $h \in I^m$ (and then replace $g$ by $h$) such that $g \preceq_D h \preceq_D f$ and 
$
\lambda_h - \lambda_g \in \Z[\epsilon_3, \epsilon_4, \dots, \epsilon_m]$, $\lambda_f -\lambda_h \in \Z[\epsilon_1, \epsilon_2]$. Thus let us simply assume $m=2$.

We know that ${\rm wt}_{\imath}(f) = {\rm wt}_{\imath}(g)$ by our assumption. All elements in $I^{2}$ of the same  $\Ui$-weight ${\rm wt}_{\imath}(f)$ has the Hasse diagram with respect to the partial ordering $\preceq_B$ as the following ($a \ge 0$, $b \ge 0$, $a \le b$):

\begin{center}
\begin{tikzpicture}

\draw (0,0) node (0101) {(-a, -b)};
\draw (-2,-1) node (010) {(-b, -a)};
\draw (-2, -2) node (10) {(b, -a)};
\draw (-2, -3) node (0) {(-a, b)};
\draw (0, -4) node (e) {(a, b)};
\draw (2, -1) node (101) {(a, -b)};
\draw (2, -2) node (01) {(-b, a)};
\draw (2, -3) node (1) {(b, a)};


\draw[->] (1) -- node[below right] {$\preceq_B$} (e);
\draw[->] (01) -- (1);
\draw[->] (101) -- (01);
\draw[->] (0101) -- (101);
\draw[->] (0) --  (e);
\draw[->] (10) -- (0);
\draw[->] (010) -- (10);
\draw[->] (0101) --  node[above left] {$\preceq_B$} (010);
\draw[->] (010) -- (01);
\draw[->] (101) -- (10);
\draw[->] (10) -- (1);
\draw[->] (01) -- (0);

\end{tikzpicture}
\end{center}
Applying $s_0$ to the vertices, which preserves the $\Ui$-weight ${\rm wt}_{\imath}(f)$, we can rewrite the Hasse diagram with respect to $\preceq_B$ as: 
\begin{center}
\begin{tikzpicture}

\draw (0,0) node (101) {(a, -b)}  ;
\draw (-2,-1) node  (10) {(b, -a)} ;
\draw (-2, -2) node (010) {(-b, -a)};
\draw (-2, -3) node (e) {(a, b)}  ;
\draw (0, -4) node (0) {(-a, b)};
\draw (2, -1) node (0101) {(-a, -b)} ;
\draw (2, -2) node (1) {(b, a)} ;
\draw (2, -3) node (01) {(-b, a)} ;

\draw[->] (1) --  (e);
\draw[->] (01) -- (1);
\draw[->] (101) -- (01);
\draw[->] (0101) -- (101);
\draw[->] (0) -- (e);
\draw[->] (10) -- (0);
\draw[->] (010) -- (10);
\draw[->] (0101) -- (010);
\draw[->] (010) -- (01);
\draw[->] (101) -- (10);
\draw[->] (10) -- (1);
\draw[->] (01) -- (0);

\end{tikzpicture}
\end{center}

Combining the two diagrams, we have the following Hasse diagram with respect to the partial ordering $\preceq_D$: 

\begin{center}
\begin{tikzpicture}

\draw (0,0) node (0101) {(-a, -b)};
\draw (-2,-1) node (010) {(-b, -a)};
\draw (-2, -2) node (10) {(b, -a)};
\draw (-2, -3) node (0) {(-a, b)};
\draw (0, -4) node (e) {(a, b)};
\draw (2, -1) node (101) {(a, -b)};
\draw (2, -2) node (01) {(-b, a)};
\draw (2, -3) node (1) {(b, a)};


\draw[->] (0101) --  node[above left] {$\preceq_D$} (010);
\draw[->] (0101) -- (1);
\draw[->] (010) -- (e);
\draw[->] (010) -- (01);
\draw[->] (101) -- (10);
\draw[->] (10) -- (0);
\draw[->] (10) -- (1);
\draw[->] (01) -- (0);
\draw[->] (101) -- (01);
\draw[->] (1) -- node[below right] {$\preceq_D$}  (e);

\end{tikzpicture}
\end{center}
The rest of the proposition follows from case by case computation. For example, we have 
\[
\lambda_{(-b,-a)} -\lambda_{(-b, a)} = -2a \epsilon_2 = a (-\epsilon_1 - \epsilon_2) + a (\epsilon_1- \epsilon_2).
\]
\end{proof}


\begin{rem}
Note that the set $\{\epsilon_1, \epsilon_1- \epsilon_2, \dots,  \epsilon_{m-1}- \epsilon_m \}$, and the set $\{\epsilon_1 + \epsilon_2, \epsilon_1- \epsilon_2, \dots,  \epsilon_{m-1}- \epsilon_m \}$ are the sets of simple roots for type B, and type D root systems, respectively. So for $f$, $g \in I^m$, $g \preceq_B f$ means that $\lambda_f -\lambda_g $ is a non-negative integral linear combination of type B simple roots, and $g \preceq_D f$ means that $\lambda_f -\lambda_g $ is a non-negative integral linear combination of type D simple roots, respectively. 
\end{rem}

\begin{rem}
Actually if we know $g \preceq_B f$ and $g \cdot s_0 \preceq_B f \cdot s_0$, we have 
\begin{align*}
\lambda_f - \lambda_g &= a_0 (-\epsilon_1) + \sum^{m-1}_{i=1} a_i (\epsilon_i - \epsilon_{i+1}) \quad & \text{ where } a_i \in \N;\\
\lambda_{f\cdot s_0}- \lambda_{g \cdot s_0} &= (2a_1 -a_0) (-\epsilon_1) + \sum^{m-1}_{i=1} a_i (\epsilon_i - \epsilon_{i+1})  \quad & \text{ where } a_i \in \N, 2a_1 - a_0 \in \N.
\end{align*}
But we can write $\lambda_f - \lambda_g$ as 
\[
\lambda_f - \lambda_g = \hf(a_0) (-\epsilon_{1} - \epsilon_2) + \hf(2a_1 - a_0) (\epsilon_1- \epsilon_2) + \sum^{m-1}_{i=2} a_i (\epsilon_i - \epsilon_{i+1}).
\]
We already know that $a_0 \ge 0$ and $\hf(2a_1 - a_0) \ge 0 $. So Proposition~\ref{prop:bruhatD} is essentially showing that $\hf(a_0)$ and $\hf(2a_1 - a_0)$ are  actually integers, if we in addition have ${\rm wt}_{\imath} (f) = {\rm wt}_{\imath} (g)$.
\end{rem}

In light of the proposition we shall see that the bar involution $\ibar$ on the tensor space $\VV^{\otimes m}$ actually respects the coarser partial  ordering $\preceq_D$.
\begin{prop} 

For any $f \in I^m$, we have 
\[
\ibar (M_f) = \Upsilon \Abar (M_f) = M_f + \sum_{g \preceq_D f} c_{g, f} M_g.
\]
\end{prop}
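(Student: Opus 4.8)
The plan is to bootstrap from the already-established $\preceq_B$ statement. By \cite[Theorem~5.8]{BW13} (or the analogue phrased in our setting), we know that $\ibar(M_f) = \Upsilon\psi(M_f)$ has the form $M_f + \sum_{g} c_{g,f} M_g$ where the sum is over $g \preceq_B f$ with $\mathrm{wt}_\imath(g) = \mathrm{wt}_\imath(f)$ (the weight condition is automatic since $\Upsilon_\mu \ne 0$ only when $\mu^\inv = \mu$, so $\ibar$ preserves $\Ui$-weights). So the content is really to upgrade ``$g \preceq_B f$'' to ``$g \preceq_D f$'' for each $g$ actually appearing with a nonzero coefficient.

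The key observation is the $s_0$-equivariance from \thmref{int:thm:SchurB}(1): the action of $\mc{T}^{-1}\otimes\id^{m-1}$ equals the action of $s_0 \in \mc{H}^1_{B_m}$, and by \thmref{thm:samebar} the bar involution $\ibar$ is compatible with the bar involution of $\mc{H}^1_{B_m}$, which fixes $s_0$. Hence $\ibar$ commutes with the action of $s_0$ on $\VV^{\otimes m}$, i.e. $\ibar(M_f \cdot s_0) = \ibar(M_f)\cdot s_0 = \ibar(M_f)\cdot s_0$. Writing $M_f \cdot s_0 = M_{f\cdot s_0}$ and applying the $\preceq_B$-form of the bar involution to both $M_f$ and $M_{f\cdot s_0}$, I would match coefficients: from $\ibar(M_f) = \sum_g c_{g,f} M_g$ and $\ibar(M_{f\cdot s_0}) = \sum_h c_{h, f\cdot s_0} M_h$, the identity $\ibar(M_f)\cdot s_0 = \ibar(M_{f\cdot s_0})$ gives $c_{g,f} = c_{g\cdot s_0,\, f\cdot s_0}$ for all $g$ (since $s_0$ just permutes/signs the basis $M_g \mapsto M_{g\cdot s_0}$ with no scalar). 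In particular, whenever $c_{g,f} \ne 0$ we get both $g \preceq_B f$ (from the $\preceq_B$-statement applied to $f$) and $g\cdot s_0 \preceq_B f\cdot s_0$ (from the $\preceq_B$-statement applied to $f\cdot s_0$, since $c_{g\cdot s_0, f\cdot s_0} = c_{g,f}\ne 0$). By the definition of $\preceq_D$, this means $g \preceq_D f$, which is exactly the claim.

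So the steps in order are: (i) recall/cite that $\ibar(M_f) = M_f + \sum_{g \prec_B f} c_{g,f} M_g$ with $\mathrm{wt}_\imath$ preserved; (ii) use \thmref{int:thm:SchurB}(1) to identify the $s_0$-action with $\mc{T}^{-1}\otimes\id^{m-1}$ and \thmref{thm:samebar} to deduce $\ibar$ commutes with this action; (iii) observe $s_0$ acts on the monomial basis by $M_g \mapsto M_{g\cdot s_0}$ with no scalar, hence the commutation forces the symmetry $c_{g,f} = c_{g\cdot s_0,\,f\cdot s_0}$; (iv) conclude that $c_{g,f}\ne 0$ implies $g\preceq_B f$ and $g\cdot s_0 \preceq_B f\cdot s_0$, i.e. $g\preceq_D f$.

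The main obstacle I anticipate is step (iii): one must be careful that $s_0$ acting on $\VV^{\otimes m}$ (via $M_f\cdot s_0 = M_{f\cdot s_0}$, where $(f(1),f(2),\dots)\cdot s_0 = (-f(1),f(2),\dots)$) is genuinely a signless permutation of the monomial basis, so that comparing $\bigl(\sum_g c_{g,f} M_g\bigr)\cdot s_0 = \sum_g c_{g,f} M_{g\cdot s_0}$ with $\ibar(M_{f\cdot s_0}) = \sum_h c_{h,f\cdot s_0} M_h$ yields the clean coefficient identity after reindexing $h = g\cdot s_0$. This is immediate from the explicit formula for the $s_0$-action, but it is the place where the argument could go wrong if one conflated the two meanings of ``$s_0$'' (the generator $H_0^p|_{p=1}$ versus the set-level involution on $I^m$); here they coincide precisely because $p=1$, which is the whole point of working with $\mc{H}^1_{B_m}$. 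Everything else is formal.
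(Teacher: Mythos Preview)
Your proposal is correct and follows essentially the same route as the paper's proof: start from the known $\preceq_B$ form of $\ibar(M_f)$, use the compatibility of $\ibar$ with the $\mc{H}^1_{B_m}$-action (hence with $s_0$, since $\overline{s_0}=s_0$) from Theorem~\ref{thm:samebar}, extract the coefficient symmetry $c_{g,f}=c_{g\cdot s_0,\,f\cdot s_0}$, and conclude $g\preceq_B f$ together with $g\cdot s_0\preceq_B f\cdot s_0$, which is the definition of $g\preceq_D f$. The paper additionally cites Proposition~\ref{prop:bruhatD} at the end, but as you implicitly observe this is not needed for the statement as written---the two $\preceq_B$ conditions are already the definition of $\preceq_D$; your invocation of Theorem~\ref{int:thm:SchurB}(1) is likewise not strictly necessary, since Theorem~\ref{thm:samebar} alone gives the required compatibility with $s_0\in\mc{H}^1_{B_m}$.
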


\begin{proof}

Following \cite[Lemma~9.4]{BW13}, we have 
\[
\ibar (M_f) = \Upsilon \Abar (M_f) = M_f + \sum_{g \preceq_B f} c_{g, f} M_g.
\]
Thanks to compatibility in Theorem~\ref{thm:samebar}, we have 
\[
M_{f\cdot s_0} + \sum_{g' \preceq_B f\cdot s_0} c_{g', f\cdot s_0} M_{g'} = \ibar(M_f \cdot s_0) = \ibar(M_f) \cdot \overline{s_0} =  M_f \cdot s_0 + \sum_{g \preceq_B f} c_{g,f} M_{g \cdot s_0}.
\]
Therefore we have $c_{g', f \cdot s_0} = c_{g, f}$ if $g' = g\cdot s_0$. Thus we have $g \preceq_B f$ and $g\cdot s_0 \preceq_B f \cdot s_0$.
By Proposition~\ref{prop:bruhatD}, this implies $g \preceq_D f$. The proposition follows.
\end{proof}

\begin{rem}
We shall NOT use the partial ordering $\preceq_D$, or any variation of the this partial ordering in this paper. We shall only use the partial ordering $\preceq_B$ and its variants in this paper.

\end{rem}


\subsection{The $(\Ui_q(\mathfrak{sl}_{2r+1}), \mathcal{H}_{C_n})$-duality}\label{subsec:C}
In this subsection, we shall only consider the quantum symmetric pair $(\U_q(\mathfrak{sl}_{2r+1}), \Ui_q(\mathfrak{sl}_{2r+1}))$. In order to avoid confusion, We shall not use the simplified notations in this section.

Let $\WW :=\VV^*$ be the (restricted) dual module of $\VV$ with basis $\{w_a \mid a \in I_{2r+1}\}$ such that 
$\langle w_a, v_b \rangle =  (-q)^{-a} \delta_{a,b}$. The action of $\U_q(\mathfrak{sl}_{2r+1})$ on $\WW$ is given by the following formulas 
(for $i \in \I_{2r+1}$, $a \in I_{2r+1}$):
\[
E_{\alpha_i} w_a = \delta_{i-\hf, a} w_{a+1}, \quad F_{\alpha_i}w_a 
= \delta_{i+\hf, a}w_{a-1}, \quad K_{\alpha_i} w_a = q^{-(\alpha_i, \varepsilon_a)}w_a.
\]
By restriction through the embedding $\iota$, $\WW$ is naturally a $\Ui_q(\mathfrak{sl}_{2r+1})$-modules. For $n \in \Z_{>0}$, $\WW^{\otimes n}$ is naturally a $\U_q(\mathfrak{sl}_{2r+1})$-module, hence a $\Ui_q(\mathfrak{sl}_{2r+1})$-module, via the iteration of the coproduct $\Delta$. Note that $\WW$ is an involutive $\U_q(\mathfrak{sl}_{2r+1})$-module with $\psi$ defined as
\[
 \psi(w_a) = w_a, \quad \text{ for all }a \in I_{2r+1}. 
\]
Therefore $\WW^{\otimes n}$ is an involutive $\U_q(\mathfrak{sl}_{2r+1})$-module and hence an $\imath$-involutive $\Ui_q(\mathfrak{sl}_{2r+1})$-module.

Let $\mathcal{H}_{C_n} =  \mathcal{H}^{q}_{B_n}$ be the Hecke algebra of type $C$ with equal parameters ($p=q$). For $f \in I_{2r+1}^n$, let $M^*_f = w_{f(1)} \otimes \cdots \otimes w_{f(n)} \in \WW^{\otimes n}$.  The Hecke algebra $\mathcal{H}_{C_n}$ acts on $\WW^{\otimes n}$ as follows:
\begin{equation} \label{eq:typeC}
\begin{split}
 M^*_f H_a=&
 \begin{cases}
 q^{-1}M^*_f, & \text{ if } a>0, f(a) = f(a+1);\\
 M^*_{f \cdot  s_a}, & \text{ if } a > 0, f(a) > f(a+1);\\
 M^*_{f \cdot  s_a} + (q^{-1} - q) M_{f}, & \text{ if } a > 0, f(a) < f(a+1).
 \end{cases}\\
 M^*_f H^q_0= &
  \begin{cases}
 M^*_{f \cdot  s_0}, & \text{ if } f(1) < 0;\\
 M^*_{f \cdot s_0} + (q^{-1} - q) M_{f}, & \text{ if } f(1) > 0;\\
 q^{-1} M^*_{f}, &\text{ if }f(1)=0.
 \end{cases}
 \end{split}
\end{equation}

\begin{definition}
An element $f \in I_{2r+1}^n$ is called ($C$-)anti-dominant, if $  0 \le f(1)  \le f(2) \le f(3) \cdots \le f(n)$.  
\end{definition}

Let us fix a choice of $\zeta$ in \eqref{eq:zeta} such that $\mc{T}^{-1} : \WW \rightarrow \WW$ maps $w_{-s}$ to $w_{s}$, where $s$ is the largest number  in $I_{2r+1}$. The tensor product $\WW^{\otimes n}$ becomes a direct sum of permutation modules of $\mathcal{H}_{C_n}$ via the action defined in \eqref{eq:typeC}. Hence $\WW^{\otimes n}$ admits a Kazhdan-Lusztig basis  (of type C). 
The following theorem is the counterpart of the Theorem~\ref{int:thm:SchurB} and Theorem~\ref{thm:samebar}, Corollary~\ref{cor:samebar}. 


\begin{thm}[$(\Ui_{q}(\mathfrak{sl}_{2r+1}), \mathcal{H}_{C_n})$-duality] \label{thm:KLC}
\begin{enumerate}
	\item	The action of $\mc{T}^{-1} \otimes \id^{n-1}$ coincides with the action of $H_{0} \in \mathcal{H}_{C_n}$ on $\WW^{\otimes n}$.
	\item	The actions of $\Ui_{q}(\mathfrak{sl}_{2r+1})$ and $\mathcal{H}_{C_n}$ on $\WW^{\otimes n}$ commute with each other, and they form double centralizers.
	\item There exists a unique bar involution $\psi_{\imath} : \WW^{\otimes n} \rightarrow \WW^{\otimes n}$ such that for all $w \in \WW^{\otimes n}$, $h \in \mathcal{H}_{C_n}$, $u \in \Ui_q(\mathfrak{sl}_{2r+1})$, and all $C$-anti-dominant $f \in I_{2r+1}$, we have 
\[
\ibar(u v h) = \ibar(u) \, \ibar(v) \ov{h}  \quad \text{ and }  \quad \ibar(M_f) = M_f.
\]

\item	The $\imath$-canonical basis on the tensor space $\WW^{\otimes m}$ is the same as the Kazhdan-Lusztig basis of type $C$.
\end{enumerate}
\end{thm}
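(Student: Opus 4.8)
## Proof Strategy

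The plan is to mirror, verbatim where possible, the argument used for the type D case (Theorem~\ref{int:thm:SchurB}, Theorem~\ref{thm:samebar}, Corollary~\ref{cor:samebar}), since the four parts of Theorem~\ref{thm:KLC} are precisely the type C analogues. The essential point is that $\WW = \VV^*$ is again the natural-type module (the restricted dual), it is involutive with $\psi(w_a) = w_a$, and by Corollary~\ref{cor:iCBontensor} the tensor power $\WW^{\otimes n}$ carries an $\imath$-canonical basis. What changes relative to the type D setting is only: (i) we use $\mathcal{H}_{C_n} = \mathcal{H}^q_{B_n}$ (equal parameters $p = q$) in place of $\mathcal{H}^1_{B_m}$, which is why part~(1) involves a single generator $H_0$ rather than the conjugated $s_0 H_1 s_0$ that produced $\mc{H}_{D_m}$; (ii) the formulas on $\WW$ are dual, so a few signs and powers of $(-q)$ are tracked differently; and (iii) the notion of anti-dominant is the type C one, $0 \le f(1) \le \cdots \le f(n)$.

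For part~(1), I would compute the action of $\mc{T} = \Upsilon \circ \widetilde{\zeta}\circ T_{w_0}$ on $\WW$ directly, exactly as in Lemma~\ref{int:lem:T}: first the braid operator $T_{w_0}$ sends $w_{-s+i}$ to (a scalar times) $w_{s-i}$, then the choice of $\zeta$ is fixed so that $\widetilde\zeta\circ T_{w_0}$ is precisely the map $w_a \mapsto w_{a\cdot s_0}$, and finally the leading terms of $\Upsilon$ for the pair $(\U_q(\mathfrak{sl}_{2r+1}),\Ui_q(\mathfrak{sl}_{2r+1}))$ are applied to the eigenvectors of $s_0$ on $\WW$. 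One checks that $\mc{T}^{-1}$ on $\WW$ has eigenvalues $-q$ and $q^{-1}$ (rather than $-1$ and $1$ as in the type D case, because $p = q$), matching the eigenvalues of $H^q_0$ from the quadratic relation $(H^q_0 - q^{-1})(H^q_0 + q) = 0$; this identifies $\mc{T}^{-1}\otimes\id^{n-1}$ with $H_0$ on $\WW^{\otimes n}$. Part~(2) is then immediate from the general double-centralizer argument of \cite[Theorem~5.4]{BW13}: $\Ui_q(\mathfrak{sl}_{2r+1})$ is a coideal subalgebra, its action commutes with $\mc{H}_{C_n}$ because the $H_i$ for $i>0$ act by the usual Jimbo-Hecke formulas and $H_0$ acts via $\mc{T}^{-1}$ which is a $\Ui$-isomorphism, and the bimodule decomposition gives the reverse centralizer statement.

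For part~(3), I would follow the proof of Theorem~\ref{thm:samebar}: the bar map $\ibar := \Upsilon\circ\psi$ on $\WW^{\otimes n}$ is compatible with the $\Ui$-bar involution by \cite[Proposition~3.10]{BW13} and with the $\mc{H}_{C_n}$-bar involution by the argument of \cite[Theorem~5.8]{BW13} (since $H_0$ now acts through $\mc{T}^{-1}$ and $\ov{H^q_0} = (H^q_0)^{-1}$ is matched by $\mc{T}\mapsto$ bar-conjugate of $\mc{T}$); uniqueness is the standard triangularity argument, and $\ibar(M^*_f) = M^*_f$ for $C$-anti-dominant $f$ because such $M^*_f$ is a highest-weight-type vector killed by all the relevant raising operators so $\Upsilon$ acts trivially on it. Part~(4) then follows formally: the $\imath$-canonical basis is the unique $\ibar$-invariant basis of $\WW^{\otimes n}$ of the triangular form \eqref{iCB} with respect to $\preceq_B$ (restricted to each $\Ui$-weight space), while the type C Kazhdan-Lusztig basis is the unique $\ov{\phantom{h}}$-invariant basis of the permutation-module decomposition of $\WW^{\otimes n}$ over $\mc{H}_{C_n}$ with the same triangularity; the compatibility in part~(3) forces them to coincide.

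The main obstacle I expect is part~(1), specifically pinning down the exact scalars in the action of $\mc{T}^{-1}$ on $\WW$ and verifying they reproduce the quadratic relation of $H^q_0$ (eigenvalues $-q, q^{-1}$) rather than those of $s_0$. This requires care with the dual pairing normalization $\langle w_a, v_b\rangle = (-q)^{-a}\delta_{a,b}$, the precise form of $\Upsilon_\mu$ for the odd pair (where the extra relation $\zeta(\mu+\alpha_0) = -q\,\zeta(\mu)$ enters), and the fact that $\WW$, unlike $\VV$, has no zero-weight vector fixed by $\mc{T}^{-1}$ — instead every weight space contributes to the $\pm$ eigenspaces. Once this single module computation is done, everything else is a transcription of the type B/D arguments already in \cite{BW13} and in the preceding subsections.
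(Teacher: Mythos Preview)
Your proposal is correct and matches the paper's approach: the paper gives no explicit proof for this theorem, merely stating that it ``is the counterpart of Theorem~\ref{int:thm:SchurB} and Theorem~\ref{thm:samebar}, Corollary~\ref{cor:samebar}'', which is precisely the transcription you outline. One small misstatement: $\WW$ \emph{does} contain $w_0$ (the index set $I$ for $\U_q(\mathfrak{sl}_{2r+1})$ is the integer set $\{-r,\ldots,r\}$), so the distinction from the type D case is not the absence of a zero-indexed vector but rather that $\mc{T}^{-1}w_0 = q^{-1}w_0$ instead of $w_0$, consistent with the $H^q_0$-eigenvalue in \eqref{eq:typeC}.
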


\begin{rem}
The actions of $\Ui_q(\mathfrak{sl}_{2r})$ and $\mc{H}_{C_n}$ on $\WW^{n}$, the restricted dual of the natural representation $\VV$ of $\U_q(\mathfrak{sl}_{2r})$, do not commute. 
\end{rem}


\section{Kazhdan-Lusztig theory of super type D}\label{sec:rep}
In this section we shall apply the theory of $\imath$-canonical bases from Section~\ref{sec:QSP} to study the BGG category $\mc{O}$ of the Lie superalgebra $\mathfrak{osp}(2m|2n)$ with respect to various Borel subalgebras. We shall formulate and establish the Kazhdan-Lusztig  theory for the Lie superalgebra $\mathfrak{osp}(2m|2n)$. 

We shall first set up various Fock spaces and establish the $\imath$-canonical bases on suitable completions of those Fock spaces. Then we study various versions of the category $\mc{O}$  of the Lie superalgebra $\mathfrak{osp}(2m|2n)$. Finally we can formulate and establish the Kazhdan-Lusztig theory for the BGG category $\mc{O}$ of the Lie superalgebra $\mathfrak{osp}(2m|2n)$.

%

We emphasize that we shall use the same partial orderings as \cite[Definition~8.3]{BW13}, even though \S\ref{subsec:Bruhat} suggests that we can use some coarser partial orderings (type B vs type D). Most proofs shall be similar to \cite{BW13}, hence shall be omitted and referred to \cite{BW13}.


\subsection{Infinite-rank constructions and notations}
We set 
\begin{align}
 \label{eq:III}
\I_{odd} = \cup^{\infty}_{r=0}  & \I_{2r+1} = \Z,
\qquad
\I^{\imath}_{odd} = \cup^{\infty}_{r=0} \I^{\imath}_{r} = \Z_{>0},
\qquad
I_{odd}  = \Z+\hf.\\
\label{eq:II}
\I_{ev} = \cup^{\infty}_{r=0}  & \I_{2r+2} = \Z+\hf,
\qquad
\I_{ev}^{\imath} = \cup^{\infty}_{r=0} \I^{\imath}_{r} = \Z_{\ge 0}+\hf,
\qquad
I_{ev}  = \Z.
\end{align}

When it is not necessary to distinguish the even or odd cases, we shall abuse the notation and simply write $\I$, $\I^{\imath}$, $I$ (of course, they have to be consistent, i.e., all even or all odd.).

We have the natural inclusions of $\Qq$-algebras:
\begin{align*}
\cdots  \subset & \U_q(\mathfrak{sl}_{2r+1}) \subset \U_q(\mathfrak{sl}_{2r+3}) \subset \cdots ,\qquad
\cdots     \subset & \Ui_q(\mathfrak{sl}_{2r+1}) \subset \Ui_q(\mathfrak{sl}_{2r+3}) \subset \cdots,\\
 \cdots \subset & \U_q(\mathfrak{sl}_{2r+2}) \subset \U_q(\mathfrak{sl}_{2r+4}) \subset \cdots ,\qquad
\cdots    \subset & \Ui_q(\mathfrak{sl}_{2r+2}) \subset \Ui_q(\mathfrak{sl}_{2r+4}) \subset \cdots.
\end{align*}
Define the following infinite rank $\Qq$-algebras:
\begin{align*}
\U_{odd} := \bigcup^{\infty}_{r=0} \U_q(\mathfrak{sl}_{2r+2}) \quad &\text{ and } \quad  \Ui_{odd} := \bigcup^{\infty}_{r=0} \Ui_q(\mathfrak{sl}_{2r+2}),\displaybreak[0]\\
\U_{ev} := \bigcup^{\infty}_{r=0} \U_q(\mathfrak{sl}_{2r+1})   \quad &\text{ and } \quad \Ui_{ev} := \bigcup^{\infty}_{r=0} \Ui_q(\mathfrak{sl}_{2r+1}).
\end{align*}

We also abuse the notation and simply write the pair ($\U, \Ui$) (with the same subscripts).
The  embeddings of finite rank algebras 
induce an embedding of $\Qq$-algebras, denoted also by $\iota : \bun \longrightarrow \U$.
Again $\U$ is naturally a Hopf algebra with coproduct $\Delta$, and its restriction under $\iota$,
$
\Delta: \bun \rightarrow \bun \otimes \U,
$
makes $\bun$ (or more precisely $\iota(\bun)$) naturally a (right) coideal subalgebra of $\U$. 
The anti-linear bar involutions on finite rank algebras induce anti-linear bar involution $\psi$ on $\U$ and anti-linear bar involution $\psi_{\imath}$ on $\Ui$, respectively.

Recall $\Pi_{k}$ denotes the simple system of $\U_q(\mathfrak{sl}_{k})$. Let
$
\Pi_{odd} := \bigcup^{\infty}_{r=0} \Pi_{2r+1}
$ ($\Pi_{ev} := \bigcup^{\infty}_{r=0} \Pi_{2r+2}$, respectively)
be a simple system of $\U_{odd}$ ($\U_{ev}$, respectively). We again shall write $\Pi$ for both $\Pi_{odd}$ and $\Pi_{ev}$.
Recall we denote the integral weight lattice of $\U_{k}$ by $\Lambda_{k}$. Then let 
\[
\Lambda_{odd} := \oplus_{i \in I_{odd}} \Z[\varepsilon_i] = \bigcup^{\infty}_{r =0} \Lambda_{2r+1} \quad \text{ and } \quad \Lambda_{ev} := \oplus_{i \in I_{ev}} \Z[\varepsilon_i] = \bigcup^{\infty}_{r =0} \Lambda_{2r+2}
\] 
be  the integral weight lattice of $\U_{odd}$ and $\U_{ev}$, respectively. Thus by abuse of notations, we have (for both cases)
\[
\Lambda = \oplus_{i \in I} \Z[\varepsilon_i].
\]
Following \S \ref{subsec:theta}, we have the quotient lattice $\Lambda_{\inv}$ of the lattice $\Lambda$.

Following \cite[\S8.1]{BW13} we can define the intertwiner $\Upsilon$ (which lies in some completion of $\U^{-}$) for the quantum symmetric pair $(\U, \Ui)$ such that 
\[
\Upsilon := \sum_{\mu \in \N\Pi} \Upsilon_{\mu}, \quad \Upsilon_\mu \in \U^{-}_{\mu}.
\]
We shall see that $\Upsilon$ is a well-defined operator on $\U$-modules that we are concerned.

\subsection{The Lie superalgebra $\mf{osp}(2m|2n)$}
   \label{subsec:osp}
   
In this subsection, we recall some basics on ortho-symplectic Lie superalgebras and set up notations to be used later on (cf. \cite{CW12} for 
more on Lie superalgebras). 

Let $\Z_2 = \{\ov{0}, \ov{1}\}$.
 Let $\C^{2m|2n}$ be a superspace of dimension $(2m|2n)$ with basis 
 $\{e_i \mid 1 \leq i \leq 2m\} \cup \{e_{\ov j} \mid 1 \leq j \leq 2n\}$, 
 where the $\Z_2$-grading is given by the following parity function:
\[
p(e_i) = \ov 0, \qquad p(e_{\ov j}) = \ov 1 \quad (\forall i,j).
\] 
Let $B$ be a non-degenerate even supersymmetric bilinear form on $\C^{2m|2n}$.
The general linear Lie superalgebra $\mf{gl}(2m|2n)$ is the Lie superalgebra of linear transformations on $\C^{2m|2n}$
(in matrix form with respect to the above basis).
 For $s \in \Z_2$, we define
\begin{align*}
\osp(2m|2n)_s &:= \{ g \in \mf{gl}(2m|2n)_s \mid B(g(x), y) = -(-1)^{s \cdot p(x)}B(x, g(y))\},\\
\osp(2m|2n) &:=\osp(2m|2n)_{\ov 0} \oplus \osp(2m|2n)_{\ov 1}.
\end{align*}

We now give a matrix realization of the Lie superalgebra $\osp(2m|2n)$. Take the supersymmetric bilinear form $B$  
with the following matrix form, with respect to the basis $(e_1, e_2, \dots, e_{2m}, e_{\ov 1}, e_{\ov 2}, \dots, e_{\ov{2n}})$: 
\[\mc J_{2m|2n} :=
\begin{pmatrix}
0 & I_m  & 0& 0\\
I_m & 0  & 0 & 0\\
0 & 0  & 0 & I^n\\
0 & 0  & -I^n & 0
\end{pmatrix}
\]

 Let $E_{i,j}$, $1 \leq i,j \leq 2m$, and $E_{\ov k , \ov h}$, $ 1 \leq k,h \leq 2n$, be the $(i,j)$th and 
 $(\ov k, \ov h)$th elementary matrices, respectively. The Cartan subalgebra of $\osp(2m|2n)$ 
 of diagonal matrices is denoted by $\mf h_{m|n}$, which is spanned by
\begin{align*}
&H_i := E_{i,i}-E_{m+i,m+i}, \quad  1 \leq i \leq m,\\
&H_{\ov j} :=E_{\ov j, \ov j} - E_{\ov{n+j}, \ov{n+j}}, \quad 1 \leq j \leq n.
\end{align*}
We denote by $\{\ep_i, \ep_{\ov j} \mid 1 \leq i \leq m, 1 \leq j \leq n \}$ the basis of $\mf h^*_{m|n}$ such that
\[
\ep_{a}(H_b) = \delta_{a,b}, \quad \text{ for  } a, b \in \{i, \ov j \mid 1 \leq i \leq m, 1 \leq j \leq n\}.
\]
We denote the lattice of integral weights of $\osp(2m|2n)$ by 
\begin{equation}  \label{eq:Xmn}
X_{ev}(m|n) := \sum^{m}_{i=1}\Z\ep_{i} + \sum^n_{j=1}\Z\ep_{\ov j}.
\end{equation}
Denote the set of half integral weights of $\osp(2m|2n)$ by
\[
X_{odd}(m|n) := \sum^{m}_{i=1}(\Z+\hf) \ep_{i} + \sum^n_{j=1}(\Z+\hf) \ep_{\ov j}.
\]
When it is not necessary to distinguish the integral or half-integral weights we shall abuse the notation, and simply write $X (m |n)$ for both of them.

The supertrace form on $\osp(2m|2n)$ induces a non-degenerate symmetric bilinear form 
on $\mf h^*_{m|n}$ denoted by $(\cdot | \cdot)$, such that
\[
(\ep_{i}\vert\ep_{a}) = \delta_{i,a}, \quad (\ep_{\ov j}| \ep_{a}) 
 = -\delta_{\ov j, a}, \quad \text{ for  } a \in   \{i, \ov j \mid 1 \leq i \leq m, 1 \leq j \leq n\}.
\]
We have the following root system of $\osp(2m|2n)$ with respect to $\mf h_{m|n}$
\[\Phi = \Phi_{\ov 0} \cup \Phi_{\ov 1} 
= \{\pm\ep_{i}\pm\ep_{j}, \pm\ep_{\ov k}\pm\ep_{\ov l}, \pm2\ep_{\ov q}\} \cup \{\pm\ep_{p}\pm\ep_{\ov q}\},
\]
where $1 \leq i < j \leq m$, $1 \leq p \leq m$, $1\leq q \leq n$, $1 \leq k < l\leq n$. 

In this paper we shall need to deal with various Borel subalgebras, hence various simple systems of $\Phi$. 
Let ${\bf b}=(b_1,b_2,\ldots,b_{m+n})$ be a
sequence of $m+n$ integers such that $m$ of the $b_i$'s are equal to
${0}$ and $n$ of them are equal to ${1}$. We call such a sequence a
{\em $0^m1^n$-sequence}. 
Associated to each $0^m1^n$-sequence ${\bf b} =(b_1, \ldots, b_{m+n})$, 
we have the following fundamental system  $\Pi_{\bf {b}}$, and hence a positive system 
$\Phi_{\bf b}^+ =\Phi_{{\bf b},\bar{0}}^+ \cup \Phi_{{\bf b},\bar{1}}^+$, of the root system $\Phi$ of $\mathfrak{osp}(2m|2n)$:
\begin{align*}
\Pi_{\bf {b}} &= \{-\ep^{b_1}_1 - \ep^{b_2}_2, \ep^{b_i}_i -\ep^{b_{i+1}}_{i+1} \mid 1 \leq i \leq m+n-1\}, \qquad & \text{ for } b_1=0;\\
\Pi_{\bf {b}} &= \{-2\ep^{b_1}_1, \ep^{b_i}_i -\ep^{b_{i+1}}_{i+1} \mid 1 \leq i \leq m+n-1\}, \qquad & \text{ for } b_1=1.
\end{align*}
where $\ep^{0}_i = \ep_{x}$ for some $1 \leq x \leq m$, $\ep^1_{j} = \ep_{\ov y}$ 
for some $1 \leq y \leq n$, such that $\ep_{x} -\ep_{x+1}$ and $\ep_{\ov y} - \ep_{\ov{y+1}}$ 
are always positive. It is clear that $\Pi_{\bf b}$ is uniquely determined by these restrictions.
The Weyl vector associate with the fundamental system $\Pi_{{\bf b}}$ is defined to be
$\rho_{\bf b}:= \hf \sum_{\alpha \in \Phi^+_{{\bf b}, \bar{0}}} \alpha -\hf \sum_{\beta \in \Phi^+_{{\bf b}, \bar{1}}} \beta$.

Corresponding to ${\bf b}^{\text{st}} =(0,\ldots, 0,1,\ldots,1)$, we have the following standard 
Dynkin diagram associated to $\Pi_{{\bf b}^{\text{st}}}$ (for $m\ge 2$):

\begin{center}
\begin{tikzpicture}
\draw (-1,1) node[label=below:$\epsilon_1 - \epsilon_2$] (1) {$\bigcirc$} ;
\draw (-1,-1) node[label=below:$-\epsilon_1 - \epsilon_2$] (0) {$\bigcirc$};
\draw (1,0) node[label=below:$\epsilon_2 - \epsilon_3$] (2) {$\bigcirc$};
\draw (2.5,0) node (3) {$\cdots$};
\draw (4,0) node[label=below:$\epsilon_m - \epsilon_{\overline{1}}$] (4) {$\bigotimes$};
\draw (6,0) node[label=below:$\epsilon_{\overline{1}} - \epsilon_{\overline{2}}$] (5) {$\bigcirc$};
\draw (7,0) node (6) {$\cdots$};
\draw (8,0) node[label=below:$\epsilon_{\overline{n-1}} - \epsilon_{\overline{n}}$] (7) {$\bigcirc$};

\draw (1) -- (2);
\draw (0) -- (2);
\draw (2) -- (3);
\draw (3) -- (4);
\draw (4) -- (5);
\draw (5) -- (6);
\draw (6) -- (7);

\end{tikzpicture}
\end{center}
As usual, $\bigotimes$ stands for an isotropic simple odd root, $\bigcirc$ stands for an simple even root.

\begin{rem}
If we have $m =1$, the corresponding Dynkin diagram becomes (with $n \ge 2$):
\begin{center}
\begin{tikzpicture}
\draw (-1,1) node[label=left:$\epsilon_1 - \epsilon_{\overline{1}}$] (1) {$\bigotimes$} ;
\draw (-1,-1) node[label=below:$-\epsilon_1 - \epsilon_{\overline{1}}$] (0) {$\bigotimes$};
\draw (1,0) node[label=below:$\epsilon_{\overline{1}} - \epsilon_{\overline{2}}$] (2) {$\bigcirc$};
\draw (2.5,0) node (3) {$\cdots$};
\draw (4,0) node[label=below:$\epsilon_{\overline{n-1}} - \epsilon_{\overline{n}}$] (4) {$\bigcirc$};

\draw (1) -- (2);
\draw (0) -- (2);
\draw (2) -- (3);
\draw (3) -- (4);
\draw (1) -- (0);

\end{tikzpicture}
\end{center}
\end{rem}

A direct computation shows that
\begin{equation}  \label{eq:rhobst}
\rho_{{\bf b}^{\text{st}}} = 0\epsilon_1 - \epsilon_2 -\ldots -(m-1) \epsilon_m
+ (m-1) \epsilon_{\bar{1}} 
+\ldots + (m-n) \epsilon_{\bar{n}}.
\end{equation}

More generally, associated to a sequence $\bf b$ which starts with two $0$'s is a Dynkin diagram which always starts on the left with a type $D$ branch:

\begin{center}
\begin{tikzpicture}
\draw (-1,1) node[label=below:$\epsilon_1 - \epsilon_2$] (1) {$\bigcirc$} ;
\draw (-1,-1) node[label=below:$-\epsilon_1 - \epsilon_2$] (0) {$\bigcirc$};
\draw (1,0) node (2) {$\bigodot$};
\draw (2.5,0) node (3) {$\cdots$};
\draw (4,0) node (4) {$\bigodot$};
\draw (6,0) node (5) {$\bigodot$};
\draw (7,0) node (6) {$\cdots$};
\draw (8,0) node (7) {$\bigodot$};

\draw (1) -- (2);
\draw (0) -- (2);
\draw (2) -- (3);
\draw (3) -- (4);
\draw (4) -- (5);
\draw (5) -- (6);
\draw (6) -- (7);

\end{tikzpicture}
\end{center}
Here $\bigodot$ stands for either $\bigotimes$ or $\bigcirc$ depending on $\bf b$.

On the other hand, corresponding to ${\bf b}^{\text{st}'} =(1,\ldots, 1,0,\ldots,0)$, we have the following another often used Dynkin diagram associated to $\Pi_{{\bf b}^{\text{st}'}}$:

\begin{center}
\begin{tikzpicture}
\draw (-0.2,0) node[label=below:$-2\epsilon_{\overline{1}}$]  (0) {$\bigcirc$};
\draw (1,0) node[label=below:$\epsilon_{\overline{1}} - \epsilon_{\overline{2}}$] (2) {$\bigcirc$};
\draw (2.5,0) node (3) {$\cdots$};
\draw (4,0) node[label=below:$\epsilon_{\overline{n}} - \epsilon_{1}$]  (4) {$\bigotimes$};
\draw (6,0) node[label=below:$\epsilon_1 - \epsilon_2$]  (5) {$\bigcirc$};
\draw (7,0) node (6) {$\cdots$};
\draw (8,0) node[label=below:$\epsilon_{m-1} - \epsilon_m$]  (7) {$\bigcirc$};
\draw (0.4,0) node {$\Longrightarrow$};
\draw (2) -- (3);
\draw (3) -- (4);
\draw (4) -- (5);
\draw (5) -- (6);
\draw (6) -- (7);

\end{tikzpicture}
\end{center}
A direct computation shows that 
\begin{equation}
\rho_{{\bf b}^{\text{st}'}} = - \epsilon_{\overline{1}} - 2\epsilon_{\overline{2}} - \cdots - n \epsilon_{\overline{n}} + n\epsilon_{1} + (n-1)\epsilon_{1}+\cdots + (n-m)\epsilon_{m}.
\end{equation}
\begin{rem}
The fundamental system $\Pi_{{\bf b}_1}$ with ${\bf b}_1 = (0,1,{\bf b}')$ and the fundamental system $\Pi_{{\bf b}_2}$ with ${\bf b}_2 = (1,0,{\bf b}')$ differ by an odd reflection (\cite[Remark~1.31]{CW12}), even though their corresponding Dynkin diagrams look quite different. 
\end{rem}

More generally, associated to a sequence $\bf b$ which starts with one $1$ is a Dynkin diagram which always starts on the left with a type $C$ branch:

\begin{center}
\begin{tikzpicture}
\draw (-0.2,0) node[label=below:$-2\epsilon_{\overline{1}}$]  (0) {$\bigcirc$};
\draw (1,0) node (2) {$\bigodot$};
\draw (2.5,0) node (3) {$\cdots$};
\draw (4,0) node   (4) {$\bigodot$};
\draw (6,0) node   (5) {$\bigodot$};
\draw (7,0) node (6) {$\cdots$};
\draw (8,0) node   (7) {$\bigodot$};
\draw (0.4,0) node {$\Longrightarrow$};
\draw (2) -- (3);
\draw (3) -- (4);
\draw (4) -- (5);
\draw (5) -- (6);
\draw (6) -- (7);

\end{tikzpicture}
\end{center}

%
%
%

Now we can write the non-degenerate symmetric bilinear form on $\Phi$ as follows:
\[
(\ep^{b_i}_i | \ep^{b_j}_j) = (-1)^{b_i} \delta_{ij}, \quad \quad \quad 1 \leq i, j \leq m+n.
\] 
We define $\mathfrak{n}_{\bf b}^\pm$ to be the nilpotent subalgebra spanned by
the positive/negative root vectors in $\osp(2m|2n)$. 
Then we obtain a triangular decomposition of $\osp(2m|2n)$:
\[
\osp(2m|2n) = \mathfrak{n}_{\bf b}^+ \oplus \mathfrak{h}_{m|n} \oplus \mathfrak{n}_{\bf b}^-,
\]
with $\mathfrak{n}_{\bf b}^+ \oplus \mathfrak{h}_{m|n}$ as a Borel subalgebra. 

Fix a $0^m1^n$-sequence ${\bf b}$ and hence a positve system $\Phi^+_{\bf b}$. 
We denote by $Z(\osp(2m|2n))$ the center of the enveloping algebra $U(\osp(2m|2n))$. 
There exists a standard projection $\phi: U(\osp(2m|2n)) \rightarrow U(\mf h_{m|n})$ which is
consistent with the PBW basis associated to the above triangular decomposition 
(\cite[\S 2.2.3]{CW12}). For $\lambda \in \mf h^*_{m|n}$, we define the central character $\chi_{\lambda}$ by letting 
$$
\chi_\lambda(z) :=\lambda(\phi(z)),\quad  \text{ for }z \in Z(\osp(2m|2n)).
$$
Denote the Weyl group of (the even subalgebra of) $\osp(2m|2n)$ by $W_{\osp}$,
which is isomorphic to $W_{D_m} \times W_{C_n}$. 
Then for $\mu$, $\nu \in \mf h^*_{m|n}$, we say $\mu$, $\nu$ are linked and denote it by $\mu \sim  \nu$, 
if there exist mutually orthogonal isotropic odd roots $\alpha_1, \alpha_2, \dots, \alpha_l$, 
complex numbers $c_1, c_2, \dots, c_l$, and an element $w \in W_{\osp}$ satisfying 
\[
\mu + \rho_{\bf b}= w(\nu +\rho_{\bf b} - \sum_{i=1}^{l}c_i\alpha_i), \quad (\nu + \rho_{\bf b}|  \alpha_j)= 0, \quad j=1 \dots, l.
\]
It is clear that $\sim$ is an equivalent relation on $\mf h^*_{m|n}$.  Versions of the following basic fact go back
to Kac, Sergeev, and others.
\begin{prop}
  \cite[Theorem 2.30]{CW12}
Let $\lambda$, $\mu \in \mf h^*_{m|n}$. Then $\lambda$ is linked to $\mu$ if and only if $\chi_{\lambda} = \chi_{\mu}$.
\end{prop}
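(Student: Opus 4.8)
The plan is to pass to the (twisted) Harish--Chandra homomorphism and then invoke the classical description of its image due to Kac and Sergeev.

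First I would introduce $\gamma\colon Z(\osp(2m|2n))\to S(\mf h_{m|n})$, the composite of the projection $\phi$ with the shift by $-\rho_{\bf b}$, normalized so that, viewing $\gamma(z)$ as a polynomial function on $\mf h^*_{m|n}$, one has $\gamma(z)(\lambda+\rho_{\bf b})=\chi_\lambda(z)$ for all $\lambda\in\mf h^*_{m|n}$ and $z\in Z(\osp(2m|2n))$. Then $\chi_\lambda=\chi_\mu$ is equivalent to $\gamma(z)(\lambda+\rho_{\bf b})=\gamma(z)(\mu+\rho_{\bf b})$ for every $z$, so the proposition becomes a statement about the functions in the image of $\gamma$. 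The next step is to record two invariance properties of every $f$ in this image: (a) $f$ is $W_{\osp}$-invariant, and (b) $f$ is \emph{supersymmetric}, i.e.\ $f(\eta)=f(\eta-c\alpha)$ whenever $\alpha$ is an isotropic odd root, $(\eta|\alpha)=0$ and $c\in\C$.

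For (a) I would use the standard argument: given an even simple root $\beta$ and $\lambda$ with $(\lambda+\rho_{\bf b},\beta^\vee)\in\Z_{>0}$, the Verma module $M_{\bf b}(\lambda)$ relative to $\Phi^+_{\bf b}$ has a singular vector of weight $s_\beta\cdot\lambda$, hence a nonzero submodule on which $Z$ acts by both $\chi_\lambda$ and $\chi_{s_\beta\cdot\lambda}$; so $\chi_\lambda=\chi_{s_\beta\cdot\lambda}$ on a Zariski-dense set, $f$ is $s_\beta$-invariant, and the $s_\beta$ generate $W_{\osp}$. For (b), since $W_{\osp}$ permutes the isotropic odd roots transitively and $f$ is already $W_{\osp}$-invariant, it suffices to treat one odd root simple for ${\bf b}^{\text{st}}$, say $\alpha=\ep_m-\ep_{\ov 1}$; here $(\rho_{{\bf b}^{\text{st}}}|\alpha)=0$, so when $(\lambda|\alpha)=0$ the module $M_{{\bf b}^{\text{st}}}(\lambda)$ has a singular vector of weight $\lambda-\alpha$ (using isotropy of $\alpha$), giving $\chi_\lambda=\chi_{\lambda-\alpha}$, and iterating then interpolating in $c$ yields (b). With (a) and (b) in hand, the ``only if'' direction follows by unwinding the linkage relation: if $\lambda\sim\mu$, one subtracts the mutually orthogonal isotropic odd roots from $\nu+\rho_{\bf b}$ one at a time (each subtraction permissible by (b) because the weight stays orthogonal to the next root), then applies the Weyl group element via (a), obtaining $\gamma(z)(\lambda+\rho_{\bf b})=\gamma(z)(\mu+\rho_{\bf b})$ for all $z$.

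For the ``if'' direction I would invoke Sergeev's theorem that $\gamma$ maps \emph{onto} the algebra $J$ of all $W_{\osp}$-invariant supersymmetric polynomial functions on $\mf h^*_{m|n}$, so that $\chi_\lambda=\chi_\mu$ forces $f(\lambda+\rho_{\bf b})=f(\mu+\rho_{\bf b})$ for all $f\in J$; it then remains to check that $J$ separates linkage classes. Here the idea is combinatorial: writing a weight $\eta=\sum_p x_p\ep_p+\sum_q y_q\ep_{\ov q}$, one cancels ``matched pairs'' (a coordinate $x_p$ equal up to sign to a coordinate $y_q$) by subtracting multiples of the corresponding isotropic odd root, which together with an element of $W_{\osp}$ reduces $\eta$ to a ``core'' plus a multiset of cancelled pairs; two weights with the same core up to $W_{\osp}$ and the same cancelled pairs are linked, while the supersymmetric power sums lying in $J$ already distinguish distinct cores. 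I expect the main obstacle to be precisely this last step, together with Sergeev's surjectivity: both are genuine combinatorial facts about supersymmetric polynomials. In the present paper they are taken as known inputs --- this is the content of \cite[Theorem~2.30]{CW12} and the work of Kac and Sergeev cited there --- so what is really needed here is only the reduction above.
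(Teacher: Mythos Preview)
The paper does not give its own proof of this proposition: it is quoted verbatim as \cite[Theorem~2.30]{CW12}, preceded by the remark that ``versions of the following basic fact went back to Kac, Sergeev, and others.'' So there is nothing to compare against here --- the statement is imported as a black box, and your sketch is in effect a summary of the argument one finds in the cited reference rather than an alternative to anything in the present paper.

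That said, your outline is the standard one and is essentially correct. One small inaccuracy: the claim that $W_{\osp}\cong W_{D_m}\times W_{C_n}$ acts transitively on the isotropic odd roots $\pm\ep_p\pm\ep_{\bar q}$ fails when $m=1$, since $W_{D_1}$ is trivial and cannot flip the sign of $\ep_1$; in that case there are two $W_{\osp}$-orbits and you would need to verify the supersymmetry condition (b) for a representative of each (or simply argue directly for a general isotropic simple odd root). For $m\ge 2$ your transitivity claim is fine. The remaining steps --- Sergeev's surjectivity onto the supersymmetric invariants and the fact that these separate linkage classes --- are, as you correctly note, precisely the substantive content of \cite[Theorem~2.30]{CW12} and are not reproved here.
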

\subsection{The BGG categories}
\label{subsec:cat}

In this subsection, we shall define various (parabolic) BGG categories for ortho-symplectic Lie superalgebras.

\begin{definition}
 Let ${\bf b}$ be a $0^m1^n$-sequence. The Bernstein-Gelfand-Gelfand
(BGG) category $\mathcal{O}_{{\bf b}}$ ($= \mathcal{O}_{{\bf b},ev}$ or $\mathcal{O}_{{\bf b},odd}$, respectively) 
is the category of  
$\h_{m|n}$-semisimple $\mathfrak{osp}(2m|2n)$-modules $M$ such that
\begin{itemize}
\item[(i)]
$M=\bigoplus_{\mu\in X(m|n)}M_\mu$ and $\dim M_\mu<\infty$; (for $X(m|n) = X_{ev}(m|n)$ or $X(m|n) =X_{odd}(m|n)$, respectively)

\item[(ii)]
there exist finitely many weights ${}^1\la,{}^2\la,\ldots,{}^k\la\in X(m|n)$
(depending on $M$) such that if $\mu$ is a weight in $M$, then
$\mu\in{{}^i\la}-\sum_{\alpha\in{\Pi_{\bf b}}}\N \alpha$, for
some $i$.
\end{itemize}
The morphisms in $\mathcal{O}_{\bf b}$ are all (not necessarily even)
homomorphisms of $\mathfrak{osp}(2m|2n)$-modules.
\end{definition}

If the $0^m1^n$-sequence  ${\bf b}$ starts with $0$, then we are interested in both $ \mathcal{O}_{{\bf b},ev}$ and $\mathcal{O}_{{\bf b},odd}$. If the $0^m1^n$-sequence  ${\bf b}$ starts with $1$, then we are interested in only $ \mathcal{O}_{{\bf b},ev}$. 

Similar to \cite[Proposition 6.4]{CLW15}, all these categories $\mc O_{\bf b}$ are identical for various $\bf b$,
since the even subalgebras of the Borel subalgebras 
$\mathfrak{n}_{\bf b}^+ \oplus \mathfrak{h}_{m|n}$ are identical and the odd parts of these Borels 
always act locally nilpotently.

Denote by $M_{\bf b}(\lambda)$ the {\bf b}-Verma modules with highest weight $\lambda$. 
Denote by  $L_{\bf b}(\lambda)$ the unique simple quotient of $M_{\bf b}(\lambda)$. They are both in $\mathcal{O}_{\bf b}$.

It is well known that the Lie superalgebra $\gl(2m|2n)$ has an automorphism $\tau$ given by the formula:
\[
\tau(E_{ij}):=-(-1)^{p(i)(p(i)+p(j))}E_{ji}.
\]
The restriction of $\tau$ on $\osp(2m|2n)$ gives an automorphism of $\osp(2m|2n)$. For an object 
$M = \oplus_{\mu \in X(m|n)}M_\mu \in \mathcal{O}_{\bf b}$, we let
\[
M^{\vee}:=\oplus_{\mu \in X(m|n)}M^*_{\mu}
\]
be the restrictd dual of $M$. We define the action of $\osp(2m|2n)$ on $M^{\vee}$ by $(g \cdot f)(x) := -f(\tau(g)\cdot x)$, 
for $ f \in M^\vee, g\in \osp(2m|2n)$, and $ x\in M$. We denote the resulting module by $M^\tau$. 

An object $M \in \mathcal{O}_{\bf b}$ is said to have a ${\bf b}$-Verma flag 
(respectively, dual ${\bf b}$-Verma flag), if $M$ has a filtration 
$
0=M_0 \subseteq M_1 \subseteq M_2 \subseteq \dots \subseteq M_t = M,
$
such that $M_i/M_{i-1} \cong M_{\bf b}(\gamma_i), 1 \leq i \leq t$ (respectively, $M_i/M_{i-1} \cong M^\tau_{\bf b}(\gamma_i)$) for some $\gamma_i \in X(m|n)$.


Associated to each $\lambda \in X(m|n)$, a ${\bf b}$-tilting module $T_{\bf b}(\lambda)$ is an indecomposable 
$\osp(2m|2n)$-module in $\mathcal{O}_{\bf b}$ characterized by the following two conditions:
$T_{\bf b}(\la)$ has a ${\bf b}$-Verma flag with $M_{\bf b}(\la)$ at
the bottom;
$\text{Ext}^1_{\CatO_{\bf b}}(M_{\bf b}(\mu),T_{\bf b}(\la))=0$, for all
$\mu\in X(m|n)$.

\subsection{Fock spaces and their completions}
Let $\VV := \sum_{a \in I} \Qq v_a$ be the natural representation of $\U$, 
where the action of $\U$ on $\VV$ is defined as follows (for $i \in \I$, $a \in I$):
\[
E_{\alpha_i} v_a = \delta_{i+\hf, a} v_{a-1}, \quad F_{\alpha_i}v_a 
= \delta_{i-\hf, a}v_{a+1}, \quad K_{\alpha_i} v_a = q^{(\alpha_i, \varepsilon_a)}v_a.
\]
Let $\WW :=\VV^*$ be the restricted dual module of $\VV$ with basis $\{w_a \mid a \in I\}$ such that 
$\langle w_a, v_b \rangle =  (-q)^{-a} \delta_{a,b}$. The action of $\U$ on $\WW$ is given by the following formulas 
(for $i \in \I$, $a \in I$):
\[
E_{\alpha_i} w_a = \delta_{i-\hf, a} w_{a+1}, \quad F_{\alpha_i}w_a 
= \delta_{i+\hf, a}w_{a-1}, \quad K_{\alpha_i} w_a = q^{-(\alpha_i, \varepsilon_a)}w_a.
\]
By restriction through the embedding $\iota$, $\VV$ and $\WW$ are naturally  $\bun$-modules.

Fix a ${0^m1^n}$-sequence ${\bf b} =(b_1,b_2,\ldots,b_{m+n})$. We have the following
tensor space over $\Q(q)$, called the {\em $\bf b$-Fock space} or simply
{\em Fock space}:
\begin{equation}  \label{eq:Fock}
{\mathbb T}^{\bf b} :={\mathbb V}^{b_1}\otimes {\mathbb
V}^{b_2}\otimes\cdots \otimes{\mathbb V}^{b_{m+n}},
\end{equation}
where we denote
$${\mathbb V}^{b_i}:=\begin{cases}
{\mathbb V}, &\text{ if }b_i={0},\\
{\mathbb W}, &\text{ if }b_i={1}.
\end{cases}$$
The tensors here and in similar settings later on are understood
to be over the field $\Q(q)$.
Note that both algebras $\U$ and $\bun$ act on $\mathbb T^{\bf b}$ via an iterated coproduct.

For $f\in I^{m+n}$, we define
\begin{equation}  \label{eq:Mf}
M^{\bf b}_f :=\texttt{v}^{b_1}_{f(1)}\otimes
\texttt{v}^{b_2}_{f(2)}\otimes\cdots\otimes
\texttt{v}^{b_{m+n}}_{f(m+n)},
\end{equation}
where we use the notation
$\texttt{v}^{b_i}:=
\begin{cases}v,\text{ if }b_i={0},
\\
w,\text{ if }b_i={1}.
\end{cases}$ 
We refer to $\{M^{\bf b}_f \mid f\in
I^{m+n}\}$ as the {\em standard monomial basis} of ${\mathbb
T}^{\bf b}$.

%

Let ${\bf b} =(b_1, \cdots, b_{m+n})$ be an arbitrary $0^m1^n$-sequence.
We first define a partial ordering on $I^{m+n}$, which depends on the sequence ${\bf b}$.
There is a natural bijection $I^{m+n} \leftrightarrow X(m|n)$ (recall $X(m|n)$ from  \eqref{eq:Xmn}), defined as
\begin{align}
&f \mapsto \lambda^{\bf b}_f, \text{ where }  \lambda^{\bf b}_f 
= \sum_{i=1}^{m+n}(-1)^{b_i}f(i)\ep^{b_i}_i -\rho_{\bf b},  \quad \text{for } f \in I^{m+n}, 
\label{osp:eq:ftolambda}
\\
& \lambda \mapsto f^{\bf b}_{\lambda},
 \text{ where } f(i)=(\lambda + \rho_{\bf b} \vert \ep^{b_i}_i), \quad \quad \quad \text{for } \lambda \in X(m|n).
\label{osp:eq:lambdatof}
\end{align}

\begin{definition}Fix a ${\bf b} = (b_1, \dots, b_{m+n})$. 
For any $f\in I^{m+n}$, let $\varepsilon_f = \sum^{m+n}_{i=1} (-1)^{b_i}\varepsilon_{f(i)} \in \Lambda$. Let $\overline{\varepsilon_f}$ be the image of $\varepsilon_f$ in the quotient $\Lambda_{\inv}$. Define the (${\bf b}$-)Bruhat ordering on the set $I^{m+n}$ (hence on $X(m|n)$) as follows:
for $f, g \in I^{m+n}$, we say $g \preceq_{\bf b} f$ if $\overline{\varepsilon_{f}} = \overline{\varepsilon_{g}}$ and 
\[
\lambda^{\bf b}_f - \lambda^{\bf b}_g = a_0 (-\epsilon^{b_1}_1) + \sum^{m+n}_{i=1}a_i (\epsilon^{b_i}_i - \epsilon^{b_{i+1}}_{i+1}), \quad \text{ for } a_i \in \N.
\]
\end{definition}

\begin{rem}
This is exactly the same partial ordering used in \cite[\S8.4]{BW13}.
\end{rem}
%
%
%
%

Let the $B$-completion $\widehat{\mathbb{T}}^{\bf b}$ be the space spanned by elements of the form (possibly infinitely many non-zero $c_{gf}^{\bf b}$) 
\begin{align}
M_f+\sum_{g\prec_{\bf b}f}c_{gf}^{\bf b}(q) M_g, \quad \text{ for } c_{gf}^{\bf b}(q) \in\Q(q).
\end{align}

We know from \cite[Lemma~9.8]{BW13} that $\ibar = \Upsilon \circ \psi : \widehat{\mathbb{T}}^{\bf b} \rightarrow \widehat{\mathbb{T}}^{\bf b}$ is an anti-linear involution such that 
\[\ibar(M_f) = M_f + \sum_{g \prec_{\bf b} f} r_{gf}(q)M_g, \quad \text{ for } r_{gf}(q) \in \mA.
\]
\begin{thm} 
 \label{thm:iCBb}
The $\Qq$-vector space $\widehat{\mathbb{T}}^{\bf b}$ has unique $\Bbar$-invariant topological bases
\[
\{T^{\bf b}_f \mid f \in I^{m+n}\} \text{ and } \{L^{\bf b}_f \mid f \in I^{m+n}\}
\]
such that 
\[
T^{\bf b}_f = M_f + \sum_{g \preceq_{\bf b} f }t^{\bf b}_{gf}(q)M^{\bf b}_g, 
\quad 
L^{\bf b}_f  = M_f + \sum_{g \preceq_{\bf b} f }\ell^{\bf b}_{gf}(q)M^{\bf b}_g,
\]
with $t^{\bf b}_{gf}(q) \in q\Z[q]$, and $\ell^{\bf b}_{gf}(q) \in q^{-1}\Z[q^{-1}]$, for $g \preceq_{\bf b}f $. 
(We shall write $t^{\bf b}_{ff}(q) = \ell^{\bf b}_{ff}(q) = 1$, $t^{\bf b}_{gf}(q)=\ell^{\bf b}_{gf}(q)=0$ for $ g \not\preceq_{\bf b} f$.)
\end{thm}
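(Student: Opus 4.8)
The plan is to reduce Theorem~\ref{thm:iCBb} to the standard Kazhdan--Lusztig--Lusztig construction of a bar-invariant basis out of a triangular bar involution, following \cite[\S9]{BW13} essentially verbatim. Two inputs are needed. The first is the triangularity of $\Bbar=\psi_{\imath}=\Upsilon\circ\psi$ on the standard monomial basis, recorded just above the theorem (after \cite[Lemma~9.8]{BW13}): $\psi_{\imath}$ is a well-defined anti-linear operator on $\widehat{\mathbb T}^{\bf b}$, one has $\psi_{\imath}^2=\id$ since $\Upsilon\cdot\overline{\Upsilon}=1$ and $\psi^2=\id$ on $\VV$ and $\WW$, and $\psi_{\imath}(M^{\bf b}_f)=M^{\bf b}_f+\sum_{g\prec_{\bf b}f}r_{gf}(q)\,M^{\bf b}_g$ with $r_{gf}(q)\in\mA$. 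The second input is an interval-finiteness property of $\preceq_{\bf b}$, discussed next, which is exactly what makes the recursion converge in the completion $\widehat{\mathbb T}^{\bf b}$ (rather than in $\mathbb T^{\bf b}$, where it would not).

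First I would check that $\preceq_{\bf b}$ is interval-finite: for any $g\preceq_{\bf b}f$, the set $\{h\mid g\preceq_{\bf b}h\preceq_{\bf b}f\}$ is finite. Indeed, $g\preceq_{\bf b}f$ gives $\lambda^{\bf b}_f-\lambda^{\bf b}_g=a_0(-\epsilon^{b_1}_1)+\sum_i a_i(\epsilon^{b_i}_i-\epsilon^{b_{i+1}}_{i+1})$ with $a_i\in\N$; the vectors $-\epsilon^{b_1}_1$ and $\epsilon^{b_i}_i-\epsilon^{b_{i+1}}_{i+1}$ (the type $\mathrm B$ simple roots of the $\epsilon$-lattice) are linearly independent, so $d(f,g):=a_0+\sum_i a_i$ is a well-defined additive rank function; $g\preceq_{\bf b}h\preceq_{\bf b}f$ then forces $d(f,h)\le d(f,g)$, and for each fixed value $d$ there are only finitely many $h$ with $d(f,h)=d$ (the $a_i$ are bounded by $d$, and $h$ is recovered from $\lambda^{\bf b}_h$ via the bijection \eqref{osp:eq:ftolambda}, also using $\overline{\varepsilon_h}=\overline{\varepsilon_f}$). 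Note that $\{g\mid g\preceq_{\bf b}f\}$ itself is typically infinite, since $I$ is unbounded in both directions; this is precisely why one works in $\widehat{\mathbb T}^{\bf b}$.

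With these in place, fix $f$ and build $T^{\bf b}_f$ by induction on $d(f,g)$. Comparing the coefficient of $M^{\bf b}_g$ on both sides of $\psi_{\imath}(T^{\bf b}_f)=T^{\bf b}_f$ gives
\[
t^{\bf b}_{gf}-\overline{t^{\bf b}_{gf}}\;=\;r_{gf}(q)\;+\sum_{g\prec_{\bf b}h\prec_{\bf b}f}\overline{t^{\bf b}_{hf}}\,r_{gh}(q),
\]
a finite sum over the (finite, by the previous paragraph) open interval $(g,f)$, lying in $\mA$ and fixed by $h\mapsto-\bar h$ as a formal consequence of $\psi_{\imath}^2=\id$. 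Since $\mA=q\Z[q]\oplus\Z[q^{-1}]$ as $\Z$-modules, $t\mapsto t-\bar t$ restricts to a bijection $q\Z[q]\xrightarrow{\ \sim\ }\{h\in\mA\mid\bar h=-h\}$ (a standard lemma, cf. \cite[24.2.1]{Lu94}), so each $t^{\bf b}_{gf}\in q\Z[q]$ is uniquely determined, and by interval-finiteness $M^{\bf b}_f+\sum_g t^{\bf b}_{gf}M^{\bf b}_g\in\widehat{\mathbb T}^{\bf b}$. Replacing $q\Z[q]$ by $q^{-1}\Z[q^{-1}]$ everywhere produces $L^{\bf b}_f$. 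Uniqueness is the usual argument: a difference of two such lifts is $\psi_{\imath}$-invariant with off-diagonal coefficients in $q\Z[q]$ (resp. $q^{-1}\Z[q^{-1}]$) and zero diagonal, so a $\preceq_{\bf b}$-maximal nonzero coefficient would be bar-invariant, hence zero. Finally, $M^{\bf b}_f\mapsto T^{\bf b}_f$ (resp. $L^{\bf b}_f$) is unitriangular for $\preceq_{\bf b}$, hence invertible on $\widehat{\mathbb T}^{\bf b}$, so both families are topological bases. I do not expect a genuine obstacle here: the whole argument is formally identical to \cite[\S9]{BW13}, and the only point requiring verification in the present setting is the interval-finiteness of $\preceq_{\bf b}$ above, which — since $\preceq_{\bf b}$ is the same ordering as in \cite[\S8.4]{BW13} — is handled exactly as there.
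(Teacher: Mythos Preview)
Your proposal is correct and follows exactly the approach the paper intends: the paper does not give an explicit proof of Theorem~\ref{thm:iCBb} but states the triangularity input from \cite[Lemma~9.8]{BW13} immediately before it and refers all such arguments to \cite[\S9]{BW13}, noting that only the precise formula for $\psi_\imath$ differs while the partial ordering $\preceq_{\bf b}$ is literally the same as in \cite[\S8.4]{BW13}. Your write-up spells out the standard Lusztig recursion (interval-finiteness of $\preceq_{\bf b}$, the bijection $q\Z[q]\to\{h\in\mA\mid \bar h=-h\}$, and unitriangularity giving a topological basis) that \cite{BW13} carries out, so there is nothing to add.
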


\begin{definition}
$\{T^{\bf b}_f \mid f \in I^{m+n}\} \text{ and } \{L^{\bf b}_f \mid f \in I^{m+n}\}$ are call the 
{\em $\imath$-canonical basis} and {\em dual $\imath$-canonical basis} of $\widehat{\mathbb{T}}^{\bf b}$, respectively. 
The polynomials $t^{\bf b}_{gf}(q)$ and $\ell^{\bf b}_{gf}(q)$ are called {\em $\imath$-Kazhdan-Lusztig (or $\imath$-KL) polynomials}.
\end{definition}
\begin{rem}
The different $\ibar$, coming from different $\Upsilon$, leads to different (dual) $\imath$-canonical basis on the tensor space $\mathbb{T}^{\bf b}$ than the ones in \cite[Definition~9.10]{BW13}.
\end{rem}
The following theorem is a counterpart of \cite[Theorem~9.11]{BW13}.

\begin{thm}
	\begin{enumerate}
	\item	(Positivity)We have $t^{\bf b}_{gf} \in \N[q]$.
	\item	The sum $T^{\bf b}_f = M_f + \sum_{g \preceq_{\bf b} f }t^{\bf b}_{gf}(q)M^{\bf b}_g$ is finite for all $f \in I^{m+n}$.
	\end{enumerate}
\end{thm}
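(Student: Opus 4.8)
The plan is to follow \cite[\S9]{BW13} and reduce both assertions to the finite-rank Fock spaces. Fix $f\in I^{m+n}$; since the image of $f$ is a finite subset of $I$, it lies in $I_k$ for some $k=2r+1$ or $2r+2$ with $r$ large. For every such $k$ one has the finite-dimensional Fock space $\Tb_k=\VV_k^{b_1}\otimes\cdots\otimes\VV_k^{b_{m+n}}$ over $(\U_q(\mathfrak{sl}_k),\Ui_q(\mathfrak{sl}_k))$, equipped with the intertwiner $\Upsilon$, the bar involution $\ibar$, and the standard monomial basis. Being a tensor product of the finite-dimensional simple $\U_q(\mathfrak{sl}_k)$-modules $\VV_k$ and $\WW_k=\VV_k^{*}$, the space $\Tb_k$ is a based $\U_q(\mathfrak{sl}_k)$-module by \cite[Theorem~27.3.2]{Lu94}, so the finite-rank instance of Corollary~\ref{cor:iCBontensor} provides an $\imath$-canonical basis $\{T^{{\bf b},k}_f\}$ whose members are genuine finite sums $T^{{\bf b},k}_f=M^{\bf b}_f+\sum_{g\prec_{\bf b}f}t^{{\bf b},k}_{gf}(q)\,M^{\bf b}_g$ with $t^{{\bf b},k}_{gf}\in q\Z[q]$.

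I would first obtain assertion (2) by a stabilization argument. The components $\Upsilon_\mu$ and the quasi-$R$-matrix components depend only on a bounded-height part of $\U^{\pm}$, so under the inclusions $\Tb_k\hookrightarrow\Tb_{k+2}\hookrightarrow\cdots\hookrightarrow\wTb$ induced by $\VV_k\hookrightarrow\VV_{k+2}$ the respective bar involutions agree on coordinate subspaces. Hence $T^{{\bf b},k}_f$, viewed inside $\wTb$, is $\ibar$-invariant and of the triangular form demanded in Theorem~\ref{thm:iCBb}, and by the uniqueness statement there it must coincide with $T^{\bf b}_f$. Thus $T^{\bf b}_f$ is a finite sum and $t^{\bf b}_{gf}(q)=t^{{\bf b},k}_{gf}(q)$ for $k\gg0$; this proves (2) and reduces (1) to showing that $t^{{\bf b},k}_{gf}\in\N[q]$.

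For the positivity I would reduce the mixed Fock space to a purely-$\VV$ one. Using the $\U_q(\mathfrak{sl}_k)$-isomorphism between $\WW_k$ and a top exterior power of $\VV_k$, together with the $q$-antisymmetric embedding of the latter into a tensor power of $\VV_k$, one realizes $\Tb_k$ as a based $\Ui_q(\mathfrak{sl}_k)$-submodule of a pure tensor power $\VV_k^{\otimes N}$, compatibly with the standard monomial bases and with the bar involutions---here one matches the monomial basis of $\WW_k$, in its reversed order (cf.\ \eqref{eq:typeC}), with the induced wedge basis. Under this identification the family $\{T^{{\bf b},k}_f\}$ becomes a parabolic $\imath$-canonical basis on a parabolic subspace of $\VV_k^{\otimes N}$, which by Corollary~\ref{cor:samebar} (via Theorem~\ref{int:thm:SchurB} and Lemma~\ref{lem:HDtoHB}) is a parabolic Kazhdan--Lusztig basis of type $D$; its transition coefficients therefore lie in $\N[q]$ by the classical positivity of parabolic Kazhdan--Lusztig polynomials of finite Weyl groups of type $D$ (geometry of Schubert varieties in partial flag varieties of type $D$, or the Soergel-bimodule positivity established by Elias--Williamson). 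The entirely parallel reduction to a purely-$\WW$ tensor power, based on Theorem~\ref{thm:KLC} and the positivity of parabolic Kazhdan--Lusztig polynomials of type $C$, gives the same conclusion, so $t^{\bf b}_{gf}\in\N[q]$, which is (1).

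The hard part will be the bar-compatibility underlying the reduction of the previous paragraph: one must verify that the $q$-antisymmetrizer subspace realizing $\WW_k$ inside a tensor power of $\VV_k$ is stable under $\ibar$, that the induced change of monomial basis is lower triangular for $\prec_{\bf b}$, and pin down the precise combinatorial dictionary between the two monomial labellings. Everything else---the stabilization of $\Upsilon$ and of the quasi-$R$-matrix, the uniqueness of the $\imath$-canonical basis, and Lusztig's based-module machinery---is the $\imath$-analogue of the (super) type A arguments of \cite{CLW15} and of \cite[\S9]{BW13}, and carries over verbatim, since only the explicit closed forms for $\Upsilon$ (not the construction) differ in the present setting.
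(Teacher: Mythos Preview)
Your proposal is correct and follows essentially the same approach as the paper, which gives no proof of its own but declares the theorem a counterpart of \cite[Theorem~9.11]{BW13}; your stabilization argument for finiteness and your reduction of the mixed Fock space to a pure $\VV$-tensor power (via $\WW_k\cong\wedge^{\dim\VV_k-1}\VV_k\hookrightarrow\VV_k^{\otimes(\dim\VV_k-1)}$) followed by the identification with parabolic Kazhdan--Lusztig polynomials of type $D$ is precisely the strategy of \cite[\S9]{BW13} transported to the present parameters. One small caveat: the parallel reduction to a pure $\WW$-tensor power and type~$C$ positivity that you mention is available only for the pair $(\U_q(\mathfrak{sl}_{2r+1}),\Ui_q(\mathfrak{sl}_{2r+1}))$ (see the Remark after Theorem~\ref{thm:KLC}), so for the even-rank pair you must rely on the type~$D$ route alone---which, as you note, already suffices.
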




\subsection{Translation functors}

In \cite{Br03}, Brundan established a $\U$-module
isomorphism between the Grothendieck group of the category $\mc O$ of $\gl(m|n)$ 
and a Fock space (at $q=1$), where some properly defined translation functors acting as Chevalley generators of $\U$ at $q=1$. 
In \cite{BW13}, the analogue in the setting of $\osp(2m+1|2n)$ has been developed. Here we generalize the construction to the setting of the Lie superalgebra $\osp(2m|2n)$.

\par Let $V$ be the natural $\mathfrak{osp}(2m|2n)$-module. Notice that $V$ is self-dual. 
Recalling \S\ref{subsec:osp}, we have the following decomposition of $\mathcal{O}_{\bf b}$ (for fixed ${\bf b}$):
$$
\mathcal{O}_{\bf b} = \displaystyle\bigoplus _{\chi_\lambda} \mathcal{O}_{{\bf b}, \chi_{\lambda}},
$$ 
where $\chi_\lambda$ runs over all integral or half-integral central characters, i.e. $\lambda$ runs over the equivalent classes $X(m|n) / \sim$ (recall this means $X_{ev}(m|n) / \sim$ or $X_{odd}(m|n) / \sim$, respectively).

We write $\mathcal{O}_{{\bf b}, \gamma} := \oplus_{\chi_\lambda}  \mathcal{O}_{{\bf b}, \chi_{\lambda}}$ for all $\lambda$ such that $\overline{\varepsilon_{f^{\bf b}_{\lambda}} }= \gamma \in \Lambda_{\inv}$.
For $r \geq 0$, let $S^r V$ be the $r$th supersymmetric power of $V$. 
For $i \in \I^{\imath}$, $M \in \mathcal{O}_{{\bf b}, \gamma}$, 
we define the following translation functors in $\mathcal{O}_{\bf b}$: 
\begin{align}
\bff^{(r)}_{\alpha_{i}} M &:= \text{pr}_{\gamma - r(\varepsilon_{i-\hf}-\varepsilon_{i+\hf})}(M \otimes S^rV),
\\
\be^{(r)}_{\alpha_{i}} M &:= \text{pr}_{\gamma + r(\varepsilon_{i-\hf}-\varepsilon_{i+\hf})}(M \otimes S^rV),
\label{eq:eft}
\\
\bt M &:=\text{pr}_{\gamma}(M \otimes V), \qquad \text{ (for the case $\Ui_{odd}$)},
\end{align}
where $\text{pr}_{\mu}$ is the natural projection from $\mathcal{O}_{\bf b}$ to $\mathcal{O}_{{\bf b}, \mu}$ for $\mu  \in \Lambda_{\inv}$.

Note that the (exact) translation functors naturally induce operators on the Grothendieck group $[\CatO^{\Delta}_{\bf b}]$, 
denoted by $\bff^{(r)}_{\alpha_{i}}$, $\be^{(r)}_{\alpha_{i}}$, and $\bt$ as well.
The following two lemmas are analoges of \cite[Lemmas 4.23 and 4.24]{Br03}. 
Since they are standard, we shall skip the proofs.
\begin{lem}
On the category $\mathcal{O}_{\bf b}$, the translation functors $\bff^{(r)}_{\alpha_{i}}$, $\be^{(r)}_{\alpha_{i}}$, 
and $\bt$ are all exact. They commute with the $\tau$-duality.
\end{lem}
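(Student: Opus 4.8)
The plan is to follow the argument for \cite[Lemmas~4.23 and~4.24]{Br03} and its $\osp(2m+1|2n)$ analogue in \cite{BW13} essentially verbatim: both assertions reduce to the facts that tensoring with a finite-dimensional module is exact and that projection onto a union of blocks is an exact functor compatible with $\tau$-duality.

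First I would observe that $\bff^{(r)}_{\alpha_i}$, $\be^{(r)}_{\alpha_i}$ and $\bt$ really are endofunctors of $\mathcal{O}_{\bf b}$. If $M\in\mathcal{O}_{\bf b}$, then $M\otimes S^rV$ (respectively $M\otimes V$) is again $\mf h_{m|n}$-semisimple with finite-dimensional weight spaces, and since $S^rV$ and $V$ are finite-dimensional, every weight of $M\otimes S^rV$ has the form $\mu+\nu$ with $\mu$ a weight of $M$ and $\nu$ one of the finitely many weights of $S^rV$; enlarging the finite set $\{{}^1\lambda,\ldots,{}^k\lambda\}$ for $M$ by these finitely many $\nu$'s shows $M\otimes S^rV\in\mathcal{O}_{\bf b}$, and likewise for $M\otimes V$. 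Exactness is then immediate: $-\otimes S^rV$ and $-\otimes V$ are exact already on $\C$-vector spaces, and $\text{pr}_\mu$ is the projection onto a direct summand of $\mathcal{O}_{\bf b}$, namely a sum of the blocks $\mathcal{O}_{{\bf b},\chi_\lambda}$, hence exact. A composite of exact functors is exact, so $\bff^{(r)}_{\alpha_i}$, $\be^{(r)}_{\alpha_i}$ and $\bt$ are exact.

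Next I would check the compatibility with the $\tau$-duality $M\mapsto M^\tau$ of \S\ref{subsec:cat}. Since $(-)^\tau$ is an exact contravariant functor, ``commuting with'' a translation functor $T$ means $T\circ(-)^\tau\cong(-)^\tau\circ T$, and both sides are contravariant. Two ingredients suffice. First, the natural module $V$ is self-dual, $V^\tau\cong V$, and $\tau$-duality sends a tensor product of finite-dimensional modules to (the reverse of) the tensor product of duals; hence $(S^rV)^\tau\cong S^rV$, and for every $M\in\mathcal{O}_{\bf b}$ there is a natural isomorphism $(M\otimes S^rV)^\tau\cong M^\tau\otimes S^rV$, and similarly $(M\otimes V)^\tau\cong M^\tau\otimes V$. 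Second, the $\tau$-duality preserves central characters: $Z(\osp(2m|2n))$ acts on $M^\tau$ by the same character $\chi_\lambda$ as on $M$, so $(-)^\tau$ preserves the block decomposition $\mathcal{O}_{\bf b}=\bigoplus_{\chi_\lambda}\mathcal{O}_{{\bf b},\chi_\lambda}$ and therefore commutes with every projection $\text{pr}_\mu$, $\mu\in\Lambda_{\inv}$. Combining the two ingredients with the definitions of the translation functors gives $(\bff^{(r)}_{\alpha_i}M)^\tau\cong\bff^{(r)}_{\alpha_i}(M^\tau)$ and the analogous isomorphisms for $\be^{(r)}_{\alpha_i}$ and $\bt$.

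The one step that is not purely formal is the compatibility of $\tau$-duality with the block decomposition. Because $\tau$ negates the $\mf h_{m|n}$-weight, the summand of $M^\tau$ dual to $M_\mu$ carries weight $-\mu$, so one must verify that $M$ and $M^\tau$ lie in the same piece $\mathcal{O}_{{\bf b},\gamma}$, $\gamma=\overline{\varepsilon_{f^{\bf b}_\lambda}}\in\Lambda_{\inv}$. This is equivalent to the standard fact that $\tau$-duality fixes the simple modules $L_{\bf b}(\lambda)$, and establishing it amounts to matching the action of $\tau$ on the Cartan and on the root system as in \S\ref{subsec:osp} with the linkage description given there. I expect this bookkeeping to be the only mild obstacle; the rest is formal.
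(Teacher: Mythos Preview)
Your approach is precisely what the paper intends: it skips the proof entirely, saying the lemma is standard and pointing to \cite[Lemmas~4.23 and~4.24]{Br03}. Your outline (exactness from tensoring with a finite-dimensional module composed with block projection; compatibility with $\tau$-duality from self-duality of $V$ and preservation of central characters) is exactly the standard argument.

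One correction to your last paragraph: the $\tau$-duality does \emph{not} negate weights. While the automorphism $\tau$ indeed sends each $H_a\in\mf h_{m|n}$ to $-H_a$, the module action on $M^\tau$ is defined by $(g\cdot f)(x)=-f(\tau(g)\cdot x)$, so for $h\in\mf h_{m|n}$ and $f\in(M_\mu)^*$ one gets $(h\cdot f)(x)=-f(-h\cdot x)=\mu(h)f(x)$. Thus $M^\tau$ has the same character as $M$, and in particular $L_{\bf b}(\lambda)^\tau\cong L_{\bf b}(\lambda)$ and $\tau$-duality preserves each block $\mathcal{O}_{{\bf b},\chi_\lambda}$ on the nose. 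So the ``mild obstacle'' you anticipate does not arise; the argument is entirely formal.
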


\begin{lem}
  \label{osp:lem:MOSV}
Let $\nu_1$, $\dots$, $\nu_N$ be the set of weights of $S^rV$ ordered so that $v_i > v_j$ if and only if $ i <j$. 
Let $\lambda \in X(m|n)$. Then $M_{\bf b}(\lambda) \otimes S^rV$ has a multiplicity-free Verma flag with 
subquotients isomorphic to $M_{\bf b}(\lambda + \nu_1)$, $\dots$, $M_{\bf b}(\lambda + \nu_N)$ in the order from bottom to top.
\end{lem}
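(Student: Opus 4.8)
The plan is to establish this exactly as in the classical and $\mathfrak{gl}(m|n)$ cases (\cite[Lemma~4.24]{Br03}, see also \cite{CLW15}), via the tensor identity for induced modules together with the PBW theorem for Lie superalgebras. Write $\mathfrak{g}=\osp(2m|2n)$ and let $\mathfrak{b}_{\bf b}=\mathfrak{n}_{\bf b}^+\oplus\mathfrak{h}_{m|n}$ be the Borel subalgebra attached to $\bf b$, so that $M_{\bf b}(\lambda)=U(\mathfrak{g})\otimes_{U(\mathfrak{b}_{\bf b})}\C_\lambda$. Since $S^rV$ is a genuine finite-dimensional $\mathfrak{g}$-module, the natural map
\[
M_{\bf b}(\lambda)\otimes S^rV\;\xrightarrow{\ \sim\ }\;U(\mathfrak{g})\otimes_{U(\mathfrak{b}_{\bf b})}\bigl(\C_\lambda\otimes S^rV\bigr)
\]
is a $\mathfrak{g}$-module isomorphism --- this is the usual identity $\bigl(U(\mathfrak{g})\otimes_{U(\mathfrak{b})}N\bigr)\otimes E\cong U(\mathfrak{g})\otimes_{U(\mathfrak{b})}(N\otimes\mathrm{Res}\,E)$ for any $\mathfrak{b}$-module $N$ and any $\mathfrak{g}$-module $E$, with the evident Koszul signs in the super setting. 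So it is enough to construct a suitable $\mathfrak{b}_{\bf b}$-stable filtration of $\C_\lambda\otimes S^rV$ and then induce.

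Next I would filter $S^rV$ as a $\mathfrak{b}_{\bf b}$-module. Because $S^rV$ is finite-dimensional and $\mathfrak{h}_{m|n}$-semisimple and $\mathfrak{n}_{\bf b}^+$ strictly raises $\Phi^+_{\bf b}$-weights, I pick a basis $u_1,\dots,u_N$ of weight vectors with weights $\nu_1,\dots,\nu_N$ listed in any linear order refining the dominance order of $\Phi^+_{\bf b}$, larger weights first; then $E_i:=\mathrm{span}\{u_1,\dots,u_i\}$ satisfies $\mathfrak{n}_{\bf b}^+E_i\subseteq\bigoplus_{\mu>\nu_i}(S^rV)_\mu\subseteq E_{i-1}$ and is $\mathfrak{h}_{m|n}$-stable, hence is a $\mathfrak{b}_{\bf b}$-submodule with $E_i/E_{i-1}\cong\C_{\nu_i}$. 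Tensoring by $\C_\lambda$ gives the $\mathfrak{b}_{\bf b}$-submodules $\C_\lambda\otimes E_i$ with subquotients $\C_{\lambda+\nu_i}$. Applying $U(\mathfrak{g})\otimes_{U(\mathfrak{b}_{\bf b})}(-)$ --- an exact functor, since $U(\mathfrak{g})$ is free as a right $U(\mathfrak{b}_{\bf b})$-module by super-PBW \cite{CW12} --- turns this into a filtration $0\subseteq M_{\bf b}(\lambda)\otimes E_1\subseteq\cdots\subseteq M_{\bf b}(\lambda)\otimes E_N=M_{\bf b}(\lambda)\otimes S^rV$ whose $i$-th subquotient is $M_{\bf b}(\lambda+\nu_i)$. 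Since $\nu_1$ is a maximal weight of $S^rV$ and $M_{\bf b}(\lambda)\otimes E_1$ is the bottom nonzero piece, $M_{\bf b}(\lambda+\nu_1)$ sits at the bottom, and the ordering ``larger weights first'' is precisely ``bottom to top'', as asserted.

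It remains to record that this filtration has exactly $N=\dim S^rV$ steps, one subquotient $M_{\bf b}(\lambda+\nu_i)$ per weight-basis vector of $S^rV$ --- this is the multiplicity-free claim (if two $\nu_i$ happen to coincide the corresponding subquotients are simply isomorphic Verma modules, and in the applications one further projects onto a single block, retaining only a subset of the $\nu_i$). One may double-check the multiset of subquotients independently via $\mathrm{ch}\bigl(M_{\bf b}(\lambda)\otimes S^rV\bigr)=\mathrm{ch}\,M_{\bf b}(\lambda)\cdot\mathrm{ch}\,S^rV=\sum_i\mathrm{ch}\,M_{\bf b}(\lambda+\nu_i)$ together with linear independence of Verma characters. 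I expect no genuine obstacle: the only inputs beyond the classical argument are super-PBW (so that induction from $\mathfrak{b}_{\bf b}$ is exact) and the super tensor identity, both standard; one should simply note that everything is uniform in $\bf b$, since it uses only that $\mathfrak{n}_{\bf b}^+$ raises $\Phi^+_{\bf b}$-weights and $\mathfrak{h}_{m|n}$ acts semisimply.
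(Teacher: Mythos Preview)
Your argument is correct and is precisely the standard tensor-identity proof (à la \cite[Lemma~4.24]{Br03}) that the paper has in mind; the paper itself omits the proof entirely, declaring the lemma ``standard'' and referring to \cite{Br03}. Your parenthetical remark about possibly coinciding $\nu_i$ is apt: the phrase ``multiplicity-free'' in the statement is to be read as one Verma subquotient per weight-basis vector of $S^rV$, exactly as you interpret it.
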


Let $\Tb_\mA$ be the $\mA$-lattice spanned by the standard monomial basis of the $\Qq$-vector space $\mathbb{T}^{\bf b}$. 
We define $\mathbb{T}^{\bf b}_{\Z} =\Z \otimes_\mA \Tb_{\mA}$
where $\mA$ acts on $\Z$ with $q=1$. For any $u$ in the $\mA$-lattice $\Tb_\mA$, 
we denote by $u(1)$ its image in $\mathbb{T}^{\bf b}_{\Z}$.

Let $\mathcal{O}^{\Delta}_{\bf b}$ be the full subcategory of $\mathcal{O}_{\bf b}$ consisting of all modules possessing 
a finite ${\bf b}$-Verma flag. Let $\left[\mathcal{O}^{\Delta}_{\bf b}\right]$ be its Grothendieck group. 
The following lemma is immediate from the bijection $I \leftrightarrow {}X(m|n)$ (with consistent choice of the subscript, i.e., both $ev$ or $odd$).

\begin{lem} 
  \label{int:lem:OtoT}
The map
\[
\Psi : \left[\mathcal{O}^{\Delta}_{\bf b}\right] \longrightarrow \mathbb{T}_{\Z}^{\bf b}, 
\quad \quad \quad [M_{\bf b}(\lambda)] \mapsto M^{\bf b}_{f^{\bf b}_{\lambda}}(1),
\]
defines an isomorphism of $\Z$-modules. 
\end{lem}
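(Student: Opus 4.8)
The plan is to prove that $\Psi$ is an isomorphism by exhibiting $\Z$-bases on both sides that $\Psi$ matches up bijectively. On the target side this is immediate from the construction: $\mathbb{T}^{\bf b}_{\Z} = \Z \otimes_{\mA} \Tb_{\mA}$, where $\Tb_{\mA}$ is by definition the free $\mA$-module on the standard monomial basis $\{M^{\bf b}_f \mid f \in I^{m+n}\}$, so $\mathbb{T}^{\bf b}_{\Z}$ is free over $\Z$ with basis $\{M^{\bf b}_f(1) \mid f \in I^{m+n}\}$. On the source side I would first establish that $[\mathcal{O}^{\Delta}_{\bf b}]$ is free over $\Z$ on the Verma classes $\{[M_{\bf b}(\lambda)] \mid \lambda \in X(m|n)\}$. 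These classes span, because any $M$ with a finite ${\bf b}$-Verma flag $0=M_0\subseteq\cdots\subseteq M_t=M$ satisfies $[M]=\sum_{i=1}^{t}[M_i/M_{i-1}]$ in the Grothendieck group, each subquotient being some $M_{\bf b}(\gamma_i)$. Linear independence follows from the triangularity of characters: each $\ch M_{\bf b}(\lambda)$ is of the form $e^{\lambda}+(\text{strictly lower terms in the }\preceq_{\bf b}\text{-ordering})$, and distinct $\lambda$ have distinct leading exponents, so no nontrivial $\Z$-combination of the $\ch M_{\bf b}(\lambda)$ vanishes; since formal character is additive on short exact sequences, the Verma classes are $\Z$-linearly independent in $[\mathcal{O}^{\Delta}_{\bf b}]$.

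Granting that $\{[M_{\bf b}(\lambda)]\}$ is a $\Z$-basis of $[\mathcal{O}^{\Delta}_{\bf b}]$, there is a unique $\Z$-module homomorphism out of $[\mathcal{O}^{\Delta}_{\bf b}]$ prescribed on this basis, and $\Psi$ is by definition that homomorphism, sending $[M_{\bf b}(\lambda)] \mapsto M^{\bf b}_{f^{\bf b}_{\lambda}}(1)$; in particular $\Psi$ is well defined. It remains only to observe that $\lambda \mapsto f^{\bf b}_{\lambda}$, with $f^{\bf b}_{\lambda}(i) = (\lambda + \rho_{\bf b}\mid \ep^{b_i}_i)$, is a bijection $X(m|n)\xrightarrow{\ \sim\ } I^{m+n}$, with inverse $f \mapsto \lambda^{\bf b}_f$; this is exactly the content of \eqref{osp:eq:ftolambda} and \eqref{osp:eq:lambdatof}. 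Hence $\Psi$ carries the basis $\{[M_{\bf b}(\lambda)]\}$ bijectively onto the basis $\{M^{\bf b}_f(1)\}$, so $\Psi$ is an isomorphism of $\Z$-modules.

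The only step with any genuine content is the freeness of $[\mathcal{O}^{\Delta}_{\bf b}]$ on the Verma classes, and within it the linear independence of Verma characters; this is where one must use the finiteness conditions built into the definition of $\mathcal{O}_{\bf b}$ (finite-dimensional weight spaces and weights bounded above by finitely many in the $\preceq_{\bf b}$-ordering) so that the character argument makes sense. Alternatively, one may simply cite the standard fact that $\mathcal{O}_{\bf b}$ decomposes via central characters into blocks with finitely many simple objects, each a highest weight category with standard objects $M_{\bf b}(\lambda)$, so that the Verma classes are automatically a basis of the Grothendieck group of the $\Delta$-filtered subcategory. Everything else is bookkeeping with the explicit bijection.
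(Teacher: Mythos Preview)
Your proof is correct and follows the same approach as the paper, which simply states that the lemma ``is immediate from the bijection $I^{m+n} \leftrightarrow X(m|n)$'' given by \eqref{osp:eq:ftolambda}--\eqref{osp:eq:lambdatof}. You have merely supplied the standard details (freeness of $[\mathcal{O}^{\Delta}_{\bf b}]$ on Verma classes via the character argument) that the paper takes for granted.
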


Denote by ${_\Z \U} = \Z \otimes_{\mA}\,  {_{\mA}\U}$ the specialization of the $\mA$-algebra ${_{\mA}\U}$ at $q=1$. Hence we can view $\mathbb{T}_{\Z}^{\bf b}$ as a ${_\Z \U} $-module. 
Thanks to \eqref{int:eq:beZ}, \eqref{int:eq:bffZ}, \eqref{eq:beZ} and \eqref{eq:bffZ}, we know $\iota(\bff^{(r)}_{\alpha_{i}})$ 
and $\iota(\be^{(r)}_{\alpha_{i}})$ lie in ${_{\mA}\U}$, hence their specializations at $q=1$ in ${_\Z \U} $ act on $\mathbb{T}_{\Z}^{\bf b}$. The following proposition is a counterpart of \cite[Proposition~11.9]{BW13}.

\begin{prop}
 \label{prop:translation}
Under the identification $[\CatO^{\Delta}_{\bf b}]$ and $\mathbb{T}_{\Z}^{\bf b}$ via the isomorphism $\Psi$, 
the translation functors $\bff^{(r)}_{\alpha_{i}}$, $\be^{(r)}_{\alpha_{i}}$, and $\bt$ act in the same way 
as the specialization of  $\bff^{(r)}_{\alpha_{i}}$, $\be^{(r)}_{\alpha_{i}}$, and $\bt$ in $\bun$. 
\end{prop}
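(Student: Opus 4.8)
The plan is to compare two $\Z$-linear endomorphisms of the free $\Z$-module $\mathbb{T}^{\bf b}_{\Z}$, identified with $[\CatO^{\Delta}_{\bf b}]$ via $\Psi$, on the standard monomial basis, i.e. on the Verma classes. The left-hand side (the translation functors $\bff^{(r)}_{\alpha_i}$, $\be^{(r)}_{\alpha_i}$, $\bt$) descends to $[\CatO^{\Delta}_{\bf b}]$ since these functors are exact and preserve $\CatO^{\Delta}_{\bf b}$; the right-hand side ($\iota(\bff^{(r)}_{\alpha_i})$, $\iota(\be^{(r)}_{\alpha_i})$, $\iota(\bt)$) acts on $\mathbb{T}^{\bf b}_{\Z}$ because these elements lie in ${}_{\mA}\U$ by \eqref{int:eq:beZ}, \eqref{int:eq:bffZ}, \eqref{eq:beZ}, \eqref{eq:bffZ}. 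So it suffices to check, for each generator and each $\lambda \in X(m|n)$, that $\Psi$ sends the functor applied to $[M_{\bf b}(\lambda)]$ to the $q=1$ specialization of the quantum operator applied to $M^{\bf b}_{f^{\bf b}_{\lambda}}(1)$.

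First I would compute the functorial side. By Lemma~\ref{osp:lem:MOSV}, $M_{\bf b}(\lambda) \otimes S^rV$ has a multiplicity-free Verma flag with subquotients $M_{\bf b}(\lambda + \nu)$, where $\nu$ runs over the weights of the $r$th supersymmetric power of the natural module; hence $[M_{\bf b}(\lambda) \otimes S^rV] = \sum_{\nu} [M_{\bf b}(\lambda + \nu)]$ in $[\CatO^{\Delta}_{\bf b}]$. Applying $\text{pr}_{\gamma + r(\varepsilon_{i-\hf} - \varepsilon_{i+\hf})}$ keeps exactly those summands with $\overline{\varepsilon_{f^{\bf b}_{\lambda + \nu}}} = \gamma + r(\varepsilon_{i-\hf} - \varepsilon_{i+\hf})$ in $\Lambda_{\inv}$, since the Verma subquotients of a module sit in the blocks recorded by their highest weights. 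On the quantum side, I would expand $\iota(\be^{(r)}_{\alpha_i})$, and likewise $\iota(\bff^{(r)}_{\alpha_i})$ and $\iota(\bt)$, through the iterated coproduct of Propositions~\ref{int:prop:coproduct} and \ref{prop:coproduct} together with \eqref{int:eq:beZ}--\eqref{eq:bffZ}, and then set $q = 1$: all $K$-factors become $1$ and each $[a]!$ becomes $a!$, so the action on $M^{\bf b}_{f^{\bf b}_{\lambda}}(1)$ becomes the $r$th divided power of the classical operator $E_{\alpha_i} + F_{\alpha_{-i}}$ on the $\VV$-factors (and its reverse analogue on the $\WW$-factors), applied through comultiplication, followed by projection onto the $\Lambda_{\inv}$-degree-$(\gamma + r\alpha_i)$ part. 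Comparing, the $\nu$'s surviving the projection biject with the monomials $M^{\bf b}_g$ produced on the quantum side, all coefficients being $1$ on both sides, once one checks that the correspondence $\lambda \mapsto f^{\bf b}_\lambda$ of \eqref{osp:eq:ftolambda}--\eqref{osp:eq:lambdatof} matches ``$\lambda + \nu$'' with the right $g$ and that $\text{pr}_{\gamma + r\alpha_i}$ records exactly the grading shift built into the divided power.

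The main obstacle, and the bulk of the work, is this weight- and sign-bookkeeping; it runs entirely parallel to \cite[Proposition~11.9]{BW13} and ultimately to \cite[Lemmas~4.23 and 4.24]{Br03}. One must carefully track: the $\rho_{\bf b}$-shift in the correspondence \eqref{osp:eq:ftolambda}--\eqref{osp:eq:lambdatof}; the ``coideal'' summand $F_{\alpha_{-i}}K^{-1}_{\alpha_i}$ in $\iota(\be_{\alpha_i})$, and the summand $qF_{\alpha_0}K^{-1}_{\alpha_0}$ in $\iota(\bt)$ in the $\Ui_{odd}$ case, which is precisely the feature distinguishing $\bun$ from $\U$ and is responsible for the $\osp$-type, rather than $\gl$-type, combinatorics of weight shifts; the $\WW$-factors, on which $E_{\alpha_i}$ and $F_{\alpha_i}$ act with a reversal and the pairing sign $(-q)^{-a}$; and the behaviour at the middle of the Dynkin diagram, namely $\bt$ together with the fact that $\overline{\alpha_0} = 0$ in $\Lambda_{\inv}$ in the $\Ui_{odd}$ case, and the modified Serre relations at the node $\hf$ in the $\Ui_{ev}$ case. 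Since each of these is a routine adaptation of the corresponding step in \cite{BW13}, I would carry out the computation in the representative cases ($\be^{(r)}_{\alpha_i}$ acting through a $\VV$-factor and through a $\WW$-factor, together with $\bt$) and refer to \cite{BW13} for the remaining verifications.
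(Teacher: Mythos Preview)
Your proposal is correct and follows essentially the same approach as the paper, which does not give an independent proof but simply records the statement as the counterpart of \cite[Proposition~11.9]{BW13}. Your outline---computing the Verma flag of $M_{\bf b}(\lambda)\otimes S^rV$ via Lemma~\ref{osp:lem:MOSV}, applying the block projection, and matching the surviving summands with the $q=1$ specialization of the coideal generators expanded through \eqref{int:eq:beZ}--\eqref{eq:bffZ} and the iterated coproduct---is precisely the argument carried out in \cite{BW13} and \cite{Br03}, adapted to the present parameters.
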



\subsection{$\imath$-Kazhdan-Lusztig theory for $\osp(2m|2n)$}
%
%

We define $\left[\left[ \mathcal{O}^{\Delta}_{\bf b} \right]\right]$ as the completion of 
$\left[\mathcal{O}^{\Delta}_{\bf b}\right]$ such that the extension of $\Psi$
\begin{equation*}
\Psi: \left[\left[ \mathcal{O}^{\Delta}_{\bf b} \right]\right] \longrightarrow \widehat{\mathbb{T}}_\Z^{\bf b} 
\end{equation*}
is an isomorphism of $\Z$-modules.
\begin{thm}
  
  \begin{enumerate}
	\item	For any $0^m1^n$-sequence ${\bf b}$ starting with $0^2$, the isomorphism 
$\Psi : \left[ \left[\CatO^{\Delta}_{\bf b}\right]\right] \rightarrow \widehat{\mathbb{T}}_{\Z}^{\bf b}$ 
satisfies 
\[
\Psi([L_{{\bf b}}(\lambda)]) = L^{{\bf b}}_{f^{{\bf b}}_{\lambda}}(1), \quad \quad \quad \Psi([T_{{\bf b}}(\lambda)]) 
= T^{{\bf b}}_{f^{{\bf b}}_{\lambda}}(1), \quad \quad \text{ for } \lambda \in {}X(m|n).
\]
	\item	For any $0^m1^n$-sequence ${\bf b}$ starting with $1$, the isomorphism 
$\Psi : \left[ \left[\CatO^{\Delta}_{{\bf b},ev}\right]\right] \rightarrow \widehat{\mathbb{T}}_{{\Z},ev}^{\bf b}$ 
satisfies 
\[
\Psi([L_{{\bf b}}(\lambda)]) = L^{{\bf b}}_{f^{{\bf b}}_{\lambda}}(1), \quad \quad \quad \Psi([T_{{\bf b}}(\lambda)]) 
= T^{{\bf b}}_{f^{{\bf b}}_{\lambda}}(1), \quad \quad \text{ for } \lambda \in {}X_{ev}(m|n).
\]

\end{enumerate}
\end{thm}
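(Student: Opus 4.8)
The two parts are established by parallel inductions: part (1) by induction on $n$ with base case $n=0$ (the Lie algebra $\so(2m)$), and part (2) by induction on $m$ with base case $m=0$ (the Lie algebra $\mathfrak{sp}(2n)$), with the roles of $\VV$ and $\WW$, of $\mc{H}_{D_m}$ and $\mc{H}_{C_n}$, and of Corollary~\ref{cor:samebar} and Theorem~\ref{thm:KLC} interchanged; I describe only part (1). Throughout I use the isomorphism $\Psi$ of Lemma~\ref{int:lem:OtoT} extended to the completions, together with Proposition~\ref{prop:translation}, which says that under $\Psi$ the translation functors $\bff^{(r)}_{\alpha_i}$, $\be^{(r)}_{\alpha_i}$, $\bt$ act as the $q=1$ specializations of the $\imath$-divided powers of $\bun$. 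In the base case $n=0$ the category $\CatO_{\bf b}$ (with ${\bf b}=0^m$ forced) is a sum of integral blocks of the classical BGG category of $\so(2m)$, where the identification of $[L(\lambda)]$ and $[T(\lambda)]$ with parabolic Kazhdan--Lusztig polynomials is known (\cite{BB, BK} together with the standard tilting/Ringel-duality dictionary); by Corollary~\ref{cor:samebar} the type D Kazhdan--Lusztig basis of $\VV^{\otimes m}$ coincides with the $\imath$-canonical basis, so the base case follows once the bijection $I^m\leftrightarrow X(m|0)$ is matched up.

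For the inductive step, the formal skeleton is the following. Each tilting module $T_{\bf b}(\lambda)$ is $\tau$-self-dual and has a ${\bf b}$-Verma flag with $M_{\bf b}(\lambda)$ at the bottom and all other subquotients $M_{\bf b}(\mu)$ satisfying $\mu<\lambda$ in the ${\bf b}$-Bruhat ordering, so $\Psi([T_{\bf b}(\lambda)])$ equals $M_{f^{\bf b}_\lambda}$ plus a $\preceq_{\bf b}$-lower $\Z_{\ge 0}$-combination of standard monomials; since $T^{\bf b}_{f^{\bf b}_\lambda}(1)$ has the same shape, it suffices to show that the two have the same coefficients, i.e.\ that the Verma multiplicities $(T_{\bf b}(\lambda):M_{\bf b}(\mu))$ equal the specialized $\imath$-KL polynomials $t^{\bf b}_{gf}(1)$. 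Granting this, $\Psi([T_{\bf b}(\lambda)])=T^{\bf b}_{f^{\bf b}_\lambda}(1)$, and then $\Psi([L_{\bf b}(\lambda)])=L^{\bf b}_{f^{\bf b}_\lambda}(1)$ follows from BGG reciprocity: $\{[L_{\bf b}(\mu)]\}$ and $\{[T_{\bf b}(\lambda)]\}$ are dual under the Euler form, and $\Psi$ carries this to the form on $\widehat{\mathbb{T}}^{\bf b}$ for which the dual $\imath$-canonical and $\imath$-canonical bases are dual.

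It thus remains to identify the tilting multiplicities, which I would do by tracking $T_{\bf b}(\lambda)$ through three comparisons, all of which are compatible with $\ibar$ on the Fock-space side and hence match (dual) $\imath$-canonical bases. \emph{(a) Change of Borel.} If ${\bf b}'$ is obtained from ${\bf b}$ by transposing an adjacent $(0,1)$-pair, the fundamental systems differ by an isotropic odd reflection, $\CatO_{\bf b}$ and $\CatO_{{\bf b}'}$ are the same category, and on Fock spaces the quasi-$\mc R$-matrix gives a $\bun$-isomorphism $\widehat{\mathbb{T}}^{\bf b}\to\widehat{\mathbb{T}}^{{\bf b}'}$ intertwining $\ibar$ and matching, up to odd-reflection relabelling of highest weights, standard monomials and hence the classes of Vermas, simples and tiltings; so one may fix one convenient ${\bf b}=0^2\cdots$ per shape. \emph{(b) Parabolic versus full, and infinite rank.} Enlarging the symplectic directions and passing to a parabolic category $\overline{\CatO}$ of $\osp(2m|2\infty)$ whose Levi absorbs the new directions realizes $\CatO_{\bf b}$ as a truncation of $\overline{\CatO}$; truncation functors are exact, send standard/simple/tilting objects to objects of the same type or to $0$, and on Fock spaces are the projections of completed Fock spaces under which $\imath$-KL polynomials stabilize, as in \cite[\S9]{BW13}. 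Comparing $\overline{\CatO}$ further with the genuinely parabolic categories attached to the finite-rank $\osp(2m|2k)$ for $k<n$ is where the inductive hypothesis enters. \emph{(c) Super duality.} The equivalence of \cite{CLW11} identifies $\overline{\CatO}$ with a parabolic category $\underline{\CatO}$ of a purely even orthogonal-type Lie algebra of infinite rank, matching Vermas, simples and tiltings; on Fock spaces this is the isomorphism interchanging the fermionic and bosonic (co)wedge constructions, which is again compatible with $\ibar$, and for $\underline{\CatO}$ the required multiplicities are the infinite-rank limit of the classical type D Kazhdan--Lusztig theorem supplied by the $n=0$ input via Corollary~\ref{cor:samebar}. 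Chaining (a)--(c) with the base case gives the tilting multiplicities for $\osp(2m|2n)$ and completes the induction.

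The main obstacle is exactly this chaining: each of the three comparison functors must be checked to intertwine the categorical $\tau$-duality with $\ibar=\Upsilon\circ\psi$ on the appropriate Fock space, and --- since the intertwiner $\Upsilon$ here differs from the one in \cite{BW13} because of the change of parameters in $\bun$ (cf.\ the remark following Lemma~\ref{int:lem:T}) --- one must verify that all normalizations stay consistent along the whole chain, so that the (dual) $\imath$-canonical bases obtained at the super, finite-rank end are genuinely those of Theorem~\ref{thm:iCBb}. The remaining ingredients --- exactness and $\tau$-compatibility of translation functors, existence of Verma and tilting flags (via Lemma~\ref{osp:lem:MOSV}), BGG reciprocity for $\osp(2m|2n)$, and stabilization of $\imath$-KL polynomials under truncation --- are standard or transcribe directly from \cite{BW13}.
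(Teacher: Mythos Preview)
Your plan has the right overall architecture and matches the paper's proof in spirit: induction on $n$ (resp.\ $m$), base case via classical Kazhdan--Lusztig for $\so(2m)$ (resp.\ $\mathfrak{sp}(2n)$) combined with Corollary~\ref{cor:samebar} (resp.\ Theorem~\ref{thm:KLC}), and an inductive step built out of change of Borel, passage to parabolic, infinite rank, super duality, and truncation, all referred back to \cite{BW13}. Your remarks on translation functors, tilting characterization, and the self-duality/BGG-reciprocity argument for passing from tiltings to simples are fine.

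Where your description goes wrong is in the mechanics of the inductive step, and this is not just cosmetic. You enlarge the \emph{symplectic} direction (taking $n\to\infty$) and then claim that super duality identifies the resulting parabolic category of $\osp(2m|2\infty)$ with that of a \emph{purely even} orthogonal algebra. That is only correct at the very first step ($n=0\to n=1$); for general $n$ the super duality of \cite{CLW11} swaps the parity of the infinite tail only, so starting from $\osp(2m|2n+\underline{\infty})$ one lands in $\osp(2m|2n|\underline{\infty})$, which still has $2n$ odd directions and is not purely even. Relatedly, your sentence about the inductive hypothesis entering via ``finite-rank $\osp(2m|2k)$ for $k<n$'' does not connect to anything: knowing the theorem for smaller $n$ with the same $m$ is not what the chain uses.

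The paper's chain (and the one in \cite{BW13}) runs the enlargement the other way. One assumes $\imath\texttt{KL}(m+k|n)$ for all $k$ (same $n$, larger orthogonal part), passes by odd reflections to the Borel $0^m1^n0^k$, then goes parabolic in the last $k$ even directions, sends $k\to\infty$ to reach $\VV^{\otimes m}\otimes\WW^{\otimes n}\otimes\wedge^\infty\VV$, applies super duality $\wedge^\infty\VV\leftrightarrow\wedge^\infty\WW$ to obtain $\VV^{\otimes m}\otimes\WW^{\otimes n}\otimes\wedge^\infty\WW$, and finally truncates the semi-infinite wedge to a single $\WW$, yielding $\imath\texttt{KL}(m|n+1)$. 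The inductive hypothesis is thus used for the \emph{same} $n$ but all larger $m$, not for smaller $n$. If you reverse the direction of enlargement in your step~(b) and drop the ``purely even'' claim in~(c), your outline becomes essentially the paper's proof.
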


\begin{proof}This proof is essentially the same induction as the one in \cite[Theorem~11.13]{BW13} (or its predecessor \cite{CLW15}). The setting on the Fock spaces is exactly the same as \cite{BW13}, since the only difference is the precise formula of the bar involution $\ibar$. (Recall we are using the same partial ordering as in \cite{BW13}.)
Hence here we will be contented with specifying how each step follows
and refer the reader to the proof of \cite[Theorem~11.13]{BW13} (and the references therein) for details.

The inductive procedure case (1), denoted by
 $ \imath\texttt{KL}(m|n) \, \forall m \ge 2 \Longrightarrow \imath\texttt{KL}(m|n+1)$,
 is divided into the following steps:
{\allowdisplaybreaks
\begin{align}
 \imath\texttt{KL}(m+k|n) \;\; \forall k
 & \Longrightarrow  \imath\texttt{KL}(m|n|k) \;\; \forall k, \text{ by changing Borels}
  \label{ind:oddref} \\
 &\Longrightarrow \imath\texttt{KL}(m|n|\underline{k})
  \;\; \forall k,  \text{ by passing to parabolic}
  \label{ind:para} \\
 &\Longrightarrow \imath\texttt{KL}(m|n|\underline{\infty}),  \text{ by taking $k\mapsto \infty$}
  \label{ind:infty} \\
 & \Longrightarrow \imath\texttt{KL}(m|n+\underline{\infty}), \text{ by super duality}
  \label{ind:SD}  \\
 & \Longrightarrow \imath\texttt{KL}(m|n+1)\;\; \forall m, \text{ by truncation}.
  \label{ind:trunc}
\end{align}
 }
It is instructive to write down the Fock spaces corresponding to the
steps above:
{\allowdisplaybreaks
\begin{align*}
 \VV^{\otimes (m+k)} \otimes \WW^{\otimes n}\;\;  \forall k
 &\Longrightarrow  \VV^{\otimes m} \otimes \WW^{\otimes n} \otimes
 \VV^{\otimes k}\;\;  \forall k
  \\
 &\Longrightarrow \VV^{\otimes m} \otimes \WW^{\otimes n}\otimes
 \wedge^k\VV\;\; \forall k
   \\
 &\Longrightarrow \VV^{\otimes m} \otimes \WW^{\otimes n}\otimes
 \wedge^\infty \VV
   \\
 & \Longrightarrow \VV^{\otimes m} \otimes \WW^{\otimes n}\otimes
 \wedge^\infty\WW
   \\
 & \Longrightarrow
 \VV^{\otimes m} \otimes \WW^{\otimes (n+1)}\;\;\; \forall m \ge 2.
\end{align*}
}
Thanks to Theorem~\ref{thm:samebar} and Corollary~\ref{cor:samebar}, the base case for the
induction, $ \imath\texttt{KL}(m|0)$, is equivalent to the original
Kazhdan-Lusztig conjecture \cite{KL} for $\mf{so}(2m)$.

Step~\eqref{ind:oddref} follows from \cite[Proposition~11.14]{BW13}.

Step~\eqref{ind:para} follows from \cite[\S11.2]{BW13}.

Step~\eqref{ind:infty} follows from \cite[Proposition~11.4]{BW13}.

Step~\eqref{ind:SD}  is based on \cite[Proposition~11.12]{BW13}.

Step~\eqref{ind:trunc} is based on  \cite[Propositions~7.7, 11.4 and 9.17]{BW13}. 

The inductive procedure for case (2), denoted by
 $ \imath\texttt{KL}(n|m) \, \forall n \ge 1 \Longrightarrow \imath\texttt{KL}(n|m+1)$,
 is divided into the following steps:
{\allowdisplaybreaks
\begin{align}
 \imath\texttt{KL}(n+k|m) \;\; \forall k
 & \Longrightarrow  \imath\texttt{KL}(n|m|k) \;\; \forall k, \text{ by changing Borels}
  \label{C:oddref} \\
 &\Longrightarrow \imath\texttt{KL}(n|m|\underline{k})
  \;\; \forall k,  \text{ by passing to parabolic}
  \label{C:para} \\
 &\Longrightarrow \imath\texttt{KL}(n|m|\underline{\infty}),  \text{ by taking $k\mapsto \infty$}
  \label{C:infty} \\
 & \Longrightarrow \imath\texttt{KL}(n|m+\underline{\infty}), \text{ by super duality}
  \label{C:SD}  \\
 & \Longrightarrow \imath\texttt{KL}(n|m+1)\;\; \forall n, \text{ by truncation}.
  \label{C:trunc}
\end{align}
 }
The Fock spaces corresponding to the
steps above are the following:
{\allowdisplaybreaks
\begin{align*}
 \WW^{\otimes (n+k)} \otimes \VV^{\otimes m}\;\;  \forall k
 &\Longrightarrow  \WW^{\otimes n} \otimes \VV^{\otimes m} \otimes
 \WW^{\otimes k}\;\;  \forall k
  \\
 &\Longrightarrow \WW^{\otimes n} \otimes \VV^{\otimes m}\otimes
 \wedge^k\WW\;\; \forall k
   \\
 &\Longrightarrow \WW^{\otimes n} \otimes \VV^{\otimes m}\otimes
 \wedge^\infty \WW
   \\
 & \Longrightarrow \WW^{\otimes n} \otimes \VV^{\otimes m}\otimes
 \wedge^\infty\VV
   \\
 & \Longrightarrow
 \WW^{\otimes n} \otimes \VV^{\otimes (m+1)}\;\;\; \forall n \ge 1.
\end{align*}
}
Thanks to Theorem~\ref{thm:KLC}, the base case for the
induction, $ \imath\texttt{KL}(n|0)$, is equivalent to the original
Kazhdan-Lusztig conjecture \cite{KL} for $\mf{sp}(2n)$. The rest of the proof is similar to the previous case.

The theorem is proved.
\end{proof}

\begin{rem}
There is a similar Fock space formulation for various parabolic subcategories of $\osp(2m|2n)$-modules.
\end{rem}



\end{document}